\newtheorem{remark}{Remark}[section]
\newtheorem{theorem}[remark]{Theorem}
\newtheorem{corollary}[remark]{Corollary}
\newtheorem{lemma}[remark]{Lemma}
\newtheorem{definition}[remark]{Definition}
\numberwithin{equation}{section}
\title{Inverse problems for coupled nonlocal nonlinear systems arising in mathematical biology}
\author{Ming-Hui Ding\thanks{School of Mathematics and Statistics, Northwestern Polytechnical University, Xi'an, Shaanxi Province, China\\ Email address: dingmh@nwpu.edu.cn}, \, Hongyu Liu\thanks{Department of Mathematics, City University of Hong Kong, Hong Kong SAR, China\\ Email address: hongyu.liuip@gmail.com, hongyliu@cityu.edu.hk} \, and Catharine W.K. Lo\thanks{Department of Mathematics, City University of Hong Kong, Hong Kong SAR, China\\ Email address: wingkclo@cityu.edu.hk} \vspace{-0.5cm}}
\date{}
\begin{document}

\maketitle

\begin{abstract}
    In this paper, we propose and study several inverse problems of determining unknown parameters in nonlocal nonlinear coupled PDE systems, including the potentials, nonlinear interaction functions and time-fractional orders. In these coupled systems, we enforce non-negativity of the solutions, aligning with realistic scenarios in biology and ecology. There are several salient features of our inverse problem study: the drastic reduction in measurement/observation data due to averaging effects, the nonlinear coupling between multiple equations, and the nonlocality arising from fractional-type derivatives. These factors present significant challenges to our inverse problem, and such inverse problems have never been explored in previous literature. To address these challenges, we develop new and effective schemes. Our approach involves properly controlling the injection of different source terms to obtain multiple sets of mean flux data. This allows us to achieve unique identifiability results and accurately determine the unknown parameters. Finally, we establish a connection between our study and practical applications in biology, further highlighting the relevance of our work in real-world contexts.

\noindent{\bf Keywords:}~~inverse problems, partial data measurements, nonlocal coupled parabolic systems, fractional coupled diffusion systems, mathematical biology 

\noindent{\bf 2020 Mathematics Subject Classification:}  35R30, 35Q92, 35R11, 35K40
    
\end{abstract}

\section{Introduction}

\subsection{Mathematical Setup}
Focusing first mainly on the mathematics, but not the physical or biological applications, we begin by introducing the mathematical setup of the study.  
Let $\Omega$ be a bounded domain in $\mathbb{R}^d$ $(d\geq1)$. For $u(x):\Omega\rightarrow \mathbb{R}$, define the action of the nonlocal diffusion operator $\mathcal{L}$ on the function $u(x)$ as follows
\begin{equation}
\label{NonlocalLDef}
\mathcal{L}u(x):=2\int_{\mathbb{R}^d}(u(y)-u(x))\gamma (x,y)\,dy,\quad \forall x\in \Omega\subseteq \mathbb{R}^d.
\end{equation}
Here, the kernel $\gamma(x,y):\mathbb{R}^d\times\mathbb{R}^d\rightarrow \mathbb{R}$ is a non-negative symmetric function, and $\gamma=\sum_{i=1}^N\gamma_i$ satisfies the following inequalities
\begin{equation}
\label{KernelAssump1}
\gamma^*\geq\gamma_i(x,y)| x-y|^{n+2s_i}\geq\gamma_{*},\quad \text{for }y\in \mathbb{R}^d,
\end{equation}
where $0<s_1<\cdots<s_N<1$, $\gamma^*$ and $\gamma_*$ are positive constants uniform for all $i$. Hence, the operator $\mathcal{L}$ is nonlocal in the sense that its value at $x$ depends on information about $u$ at all points $y\neq x$.

Consider the following coupled nonlinear nonlocal parabolic system with a Dirichlet boundary condition:
\begin{equation}\label{SpaceFracMainPb}
\begin{cases}
\partial_t \textbf{u}-\mathcal{L} \textbf{u}+\bm{\alpha}\cdot\nabla \textbf{u}=\textbf{p}(x)\textbf{u}+\textbf{F}(x,t,\textbf{u})+\textbf{q} &\text{ in }\  \Omega\times(0,T],\\
\textbf{u}=0&\text{ on }\  \Omega^c\times(0, T],\\
\textbf{u}(x,0)=0,&\text{ in }\ \mathbb{R}^d,\\
\textbf{u} \geq \mathbf{0} &\text{ in }\ \bar{\Omega}\times[0,T],
\end{cases}
\end{equation}
where the convection coefficients $\bm{\alpha}=(\alpha_1,\dots,\alpha_M)$ are vectors in $\mathbb{R}^d$ with constant entries, and the reaction potentials $\textbf{p}=(p_1,\dots,p_M)$, $p_i(x)\in C^{\gamma,\gamma/2}(\bar{\Omega})$ are negative functions. This model describes anomalous spatial diffusion \cite{DuGunzburgerLehoucqZhouNonlocalDiffusion,Silling2000NonlocalLevyJumpPeridynamics,DuJuLu2019Galerkin1DNonlocalDiffusion,TianDu2013NonlocalDiffusion}, and some biological applications will also be discussed in Section \ref{sec:apply}. Here, the nonlocal Dirichlet boundary condition implies that the region outside $\Omega$ is considered a hostile environment biologically, and any individual that jumps outside of $\Omega$ dies instantly \cite{TangDai2024NonlocalAdvectionPopulation}. Therefore, no organism can leave or enter the region $\Omega$.  

For this model, we are interested in studying the potential functions $\mathbf{p}(x)$ and nonlinear interaction functions $\mathbf{F}$, by knowledge of the nonlocal flux data of $\mathbf{u}$ on an accessible region $\Omega_a\subseteq\mathbb{R}^d\backslash\Omega$ in the exterior of $\Omega$ (see Figure \ref{fig:space} for a schematic illustration). Formally, this is given by the measurement map \begin{equation}\label{FormalMapExt}\Lambda^1_{\mathbf{p},\mathbf{F}}:\mathbf{q}\to \mathbf{p},\mathbf{F}.\end{equation} 

\begin{figure}
\begin{center}
\begin{tikzpicture}[line width=1.5pt,scale=0.7]
\draw[very thick] (0,0) to [out=110,in=185] (2,2) to [out=15,in=140] (4,2) to [out=-30,in=50] (5,-1) to [out=-150,in=-50] (0,0);
\draw[very thick] (7,0) to [out=90,in=190] (8,1) to [out=190,in=140] (9,1) to [out=-30,in=50] (10,-1) to [out=-150,in=-80] (7,0);
\node at (4,1.5){$\mathbf{\Omega}$};
\node at (6.3,2){$\mathbb{R}^d\setminus\mathbf{(\Omega\cup\Omega_a)}$};
\node at (7.7,0.2){$\mathbf{\Omega_{a}}$};
\node at (1.5,0.5){$\bm{\otimes}$};
\node at (2,-0.5){$\bm{\otimes}$};
\node at (4,0.5){$\bm{\otimes}$};
\node at (2.5,1){$\bm{\otimes}$};
\node at (3,0.09){$\bm{\otimes}$};
\node at (3.5,-0.6){$\bm{\otimes}$};
\node at (8.2,-0.5){$\bm{\odot}$};
\node at (9.3,0){$\bm{\odot}$};
\node at (8.6,0.5){$\bm{\odot}$};
\node at (9,-0.8){$\bm{\odot}$};
\end{tikzpicture}
\end{center}
\caption{The schematic illustration of the coupled nonlinear nonlocal parabolic inverse problem: $\Omega$ is the physical domain, $\Omega_{a}$ denotes the accessible region, $\bm{\otimes}$ represents the input source location, and $\bm{\odot}$ indicates the measurement location.}
\label{fig:space}
\end{figure}
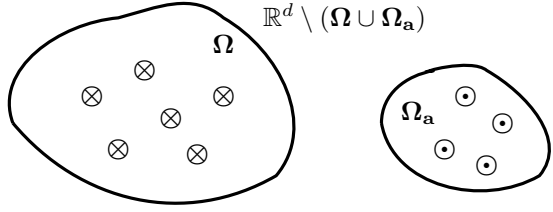

We consider the measurement given by $\langle\Lambda^1_{\mathbf{p},\mathbf{F}},\mathbf{h}\rangle$, $\mathbf{h}=(h_1,\dots,h_M)$, for some smooth function $h_i$ ($i=1,\dots,M$) which characterises the measure instrument. This means that our measurement map represents weighted integral data from an accessible part of the region. This is more often the case in physical applications \cite{DingZheng2021IPSpaceFrac}. However, due to the averaging effect, it can only provide very limited information, and also leads to more serious ill-posedness for the inverse problem. More details about this measurement map will be provided in Section \ref{subsect:MainResDef}.

We are mainly concerned with the unique identifiability issue of the inverse problem \eqref{FormalMapExt}. In a formal manner, our main result for the coupled nonlinear nonlocal parabolic inverse problem can be roughly summarised into the following theorem:
\begin{theorem}\label{FormalThmSpace}
    Suppose that $\mathbf{p},\mathbf{F}$ belong to (different) general a-priori function spaces. Given the inject source $\mathbf{q}$, let $\Lambda^1_{\mathbf{p}^k,\mathbf{F}^k}$ be the measurement map associated to \eqref{SpaceFracMainPb} for $k=1,2$. If 
    \[\Lambda^1_{\mathbf{p}^1,\mathbf{F}^1}=\Lambda^1_{\mathbf{p}^2,\mathbf{F}^2}\] for all $\mathbf{q}$, then it holds that 
    \[\mathbf{p}^1=\mathbf{p}^2\quad \text{ and }\quad \mathbf{F}^1=\mathbf{F}^2\quad \text{ in }\Omega\times(0,T).\]
\end{theorem}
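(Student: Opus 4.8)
The plan is to exploit the linearity of the measurement structure in the source $\mathbf{q}$ in two regimes: a high-frequency or small-amplitude regime that isolates the \emph{linear} part of the problem (thereby recovering $\mathbf{p}$), and then a bootstrapping-in-amplitude argument that peels off the successive Taylor coefficients of the nonlinearity $\mathbf{F}$. Concretely, I would first consider sources of the form $\mathbf{q} = \varepsilon \mathbf{g}$ for a small parameter $\varepsilon > 0$ and a fixed admissible profile $\mathbf{g}$, and study the asymptotic expansion $\mathbf{u} = \varepsilon \mathbf{u}^{(1)} + \varepsilon^2 \mathbf{u}^{(2)} + \cdots$ of the solution. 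The leading term $\mathbf{u}^{(1)}$ solves the \emph{linear} nonlocal parabolic system $\partial_t \mathbf{u}^{(1)} - \mathcal{L}\mathbf{u}^{(1)} + \bm{\alpha}\cdot\nabla\mathbf{u}^{(1)} = \mathbf{p}(x)\mathbf{u}^{(1)} + \mathbf{F}(x,t,\mathbf{0}) + \mathbf{g}$ with zero exterior/initial data (here one must be slightly careful because of the non-negativity constraint $\mathbf{u}\ge\mathbf{0}$; I would first reduce to the case $\mathbf{F}(x,t,\mathbf{0})$ known or handled by choosing $\mathbf{g}$ large enough on $\bar\Omega$ that the constraint is inactive, i.e. work in the interior of the constraint set so the equation is genuinely the PDE, not a variational inequality). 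Differentiating the measurement identity $\langle \Lambda^1_{\mathbf{p}^1,\mathbf{F}^1},\mathbf{h}\rangle = \langle \Lambda^1_{\mathbf{p}^2,\mathbf{F}^2},\mathbf{h}\rangle$ in $\varepsilon$ at $\varepsilon = 0$ then yields that the first-order linearized flux data coincide for all source profiles $\mathbf{g}$.

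The second step is to extract $\mathbf{p}^1 = \mathbf{p}^2$ from this linearized data. This is where the real analytic work sits, and I would use a duality/test-function argument: introduce the adjoint nonlocal parabolic system with terminal data built from $\mathbf{h}$ and supported to reproduce the weighted flux integral over $\Omega_a$, and then integrate by parts (using symmetry of the kernel $\gamma$ and the exterior Dirichlet condition). This converts the equality of flux data into an integral identity of the form $\int_0^T\int_\Omega (\mathbf{p}^1 - \mathbf{p}^2)\, \mathbf{u}^{(1)}\cdot \mathbf{w}\, dx\, dt = 0$ for a rich enough family of solution pairs $(\mathbf{u}^{(1)}, \mathbf{w})$ of the direct and adjoint problems. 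To conclude $\mathbf{p}^1 = \mathbf{p}^2$ one needs a \emph{runge-type approximation} / density property for solutions of the nonlocal parabolic equation: products $\mathbf{u}^{(1)}\cdot\mathbf{w}$ must be dense in $L^2(\Omega\times(0,T))$ (or at least span enough to test against an arbitrary $L^\infty$ function). Here the strong unique continuation and approximation properties peculiar to nonlocal operators are extremely helpful — the exterior region is "large" ($\Omega_a$ has positive measure), and the nonlocal operator's global character makes Runge approximation cheaper than in the local case. I would also need to handle the decoupling of the $M$ components: since the system is only coupled through $\mathbf{F}$ and $\mathbf{p}$ is diagonal, the linearization at $\mathbf{0}$ decouples if $\mathbf{F}(x,t,\mathbf{0})$'s derivative $D_\mathbf{u}\mathbf{F}(x,t,\mathbf{0})$ is incorporated as part of the (now matrix-valued) zeroth-order term; one recovers $\mathbf{p} + D_\mathbf{u}\mathbf{F}(\cdot,\cdot,\mathbf{0})$ first, then separates the diagonal stationary part $\mathbf{p}(x)$ from the possibly time-dependent nonlinear contribution by exploiting the $t$-dependence.

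For the nonlinear interaction functions $\mathbf{F}$, I would run a successive linearization (higher-order asymptotics) scheme: having matched the first-order term, subtract it, divide by $\varepsilon^2$, and pass to the limit to obtain an identity for the second-order profiles $\mathbf{u}^{(2)}$, which satisfy a linear equation forced by the quadratic Taylor coefficient $\tfrac12 D^2_\mathbf{u}\mathbf{F}(x,t,\mathbf{0})[\mathbf{u}^{(1)},\mathbf{u}^{(1)}]$. Because the first-order problem is now fully known (same $\mathbf{p}$, same linearization), $\mathbf{u}^{(1)}$ can be taken to be an essentially arbitrary solution, and polarization of the quadratic form together with the same duality-plus-Runge machinery gives $D^2_\mathbf{u}\mathbf{F}^1(x,t,\mathbf{0}) = D^2_\mathbf{u}\mathbf{F}^2(x,t,\mathbf{0})$. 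Iterating to all orders $m\ge 1$ recovers every Taylor coefficient $D^m_\mathbf{u}\mathbf{F}(x,t,\mathbf{0})$; assuming $\mathbf{F}(x,t,\cdot)$ is (real-)analytic in $\mathbf{u}$ near $\mathbf{0}$ — which should be among the a-priori conditions hidden in "general a-priori function spaces" — one concludes $\mathbf{F}^1 = \mathbf{F}^2$ in $\Omega\times(0,T)$ by analytic continuation in the $\mathbf{u}$-variable.

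I expect the main obstacle to be the \textbf{combination of the averaging in the data with the non-negativity constraint}. The weighted flux measurement $\langle\Lambda^1,\mathbf{h}\rangle$ is a single scalar (per source), so I must generate sufficiently many independent data by varying $\mathbf{q}$ over a large family of profiles and by using \emph{multiple well-chosen} source injections (as the abstract emphasizes) to compensate for the loss of pointwise information; proving that this still yields enough to pin down a function of $(x,t)$ requires a careful Runge/approximation argument adapted to the weighted, partial, and scalarized observation. Simultaneously, the constraint $\mathbf{u}\ge\mathbf{0}$ means the small-$\varepsilon$ expansion is only valid where the constraint is inactive, so I would need either a maximum-principle argument showing the solution stays strictly positive inside $\Omega$ for the chosen sources (likely true since $\mathbf{q}, \mathbf{F}(\cdot,\cdot,\mathbf{0}) \ge \mathbf{0}$ in the biological setting and the parabolic smoothing gives strict positivity in the interior), or to restrict the class of admissible $\mathbf{q}$ so that the differentiation-in-$\varepsilon$ step is rigorous. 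Establishing this "interior strict positivity / inactive constraint" fact cleanly, together with the corresponding well-posedness and smooth dependence on $\varepsilon$ for the nonlocal nonlinear system, is where I anticipate the bulk of the technical difficulty.
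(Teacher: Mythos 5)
Your overall architecture---successive linearisation in the source amplitude, an adjoint/duality identity to convert the weighted exterior flux data into interior integral identities, and a maximum-principle argument to keep the non-negativity constraint inactive---matches the paper's strategy. The treatment of the constraint is also essentially what the paper does: with $\mathbf{q}\ge 0$ and $p_i\le 0$, the weak and strong maximum principles (Lemmas 2.1--2.2) give non-negativity and interior positivity for free, so no variational-inequality issues arise.

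The genuine gap is the step where you conclude. For the potential you reduce to $\int_0^T\!\int_\Omega (\mathbf{p}^1-\mathbf{p}^2)\,\mathbf{u}^{(1)}\cdot\mathbf{w}\,dx\,dt=0$ and then invoke a Runge-type density of products $\mathbf{u}^{(1)}\cdot\mathbf{w}$ in $L^2(\Omega\times(0,T))$; for $\mathbf{F}$ you again rely on the same ``duality-plus-Runge machinery'' to recover $(x,t)$-dependent Taylor coefficients. But unique continuation and Runge approximation are \emph{not known} for this measurement map: the operator here is a mixed local--nonlocal operator $-\mathcal{L}+\bm{\alpha}\cdot\nabla$ and the observation is a single weighted average of the flux over $\Omega_a$, not full exterior Dirichlet-to-Neumann data, and the paper explicitly flags (Remark 4.1) that the needed approximation properties are open precisely for this reason. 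The paper circumvents this in two ways you do not. First, for $\mathbf{p}$ it never forms the product identity: it pairs the injected source $\varphi^n(x)V(t)$ (with $\{\varphi^n\}$ complete in $L^2(\Omega)$) directly against the adjoint solution $w$, obtaining $\int_0^T V\,w(\cdot,t;p^1)\,dt=\int_0^T V\,w(\cdot,t;p^2)\,dt$ pointwise in $x$; choosing $V=v$ and $V=\partial_t v$, applying $-\mathcal{L}+\alpha\cdot\nabla$, and subtracting the two adjoint equations yields $(p^1-p^2)\int_0^T w\,v\,dt=0$, and the strong maximum principle makes the integral strictly positive. Second, for $\mathbf{F}$ the admissible class restricts the Taylor coefficients to be \emph{constants} with no linear part (so your worry about separating $\mathbf{p}$ from $D_\mathbf{u}\mathbf{F}(\cdot,\cdot,\mathbf{0})$ is resolved by hypothesis, not by an argument), and at each linearisation order the coefficient differences multiply integrals that are strictly positive by the maximum principle, so they can be identified one at a time without any density of products. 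As written, your proof of the target statement therefore rests on an unproven (and, per the paper, currently unavailable) approximation property, and it also overclaims by aiming at $(x,t)$-dependent coefficients of $\mathbf{F}$, which the theorem does not in fact cover.
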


We will establish the conditions under which Theorem \ref{FormalThmSpace} holds in Section \ref{subsect:MainResDef}, in order to ensure that the nonlocal exterior flux data can uniquely determine the reaction functions $\mathbf{p}$ and interaction functions $\mathbf{F}$. 

Another form of nonlocality can be considered when the diffusion phenomena occurs in highly heterogeneous media with memory and hereditary properties \cite{Metzler20001,Kilbas,SamkoKilbas,UchaikinI,UchaikinII}. This type of anomalous diffusion is governed by the following time-fractional diffusion equation: 
\begin{equation}\label{TimeMainPb0}
\begin{cases}
{_{0}D}_t^{\bm{\beta}}\mathbf{u}-\mathbf{d}\Delta \mathbf{u}+\bm{\alpha}\cdot\nabla \mathbf{u}=\mathbf{p}(x,t)\mathbf{u}+\mathbf{F}(x,t,\mathbf{u})+\mathbf{q} &\text{ in }\  \Omega\times(0,T],\\
 \mathbf{u}=0&\text{ on }\  \partial\Omega\times(0, T),\\
\mathbf{u}(x,0)=0&\text{ in }\  \Omega,\\
\mathbf{u}\geq \mathbf{0}&\text{ in }\  {\bar{\Omega}\times[0,T]},
\end{cases}
\end{equation}
where the fractional time derivative ${_{0}D}_t^{\bm{\beta}}$ represents
\begin{equation}
    ({_{0}D}_t^{\bm{\beta}}\mathbf{u})_i:=\sum_{j=1}^{B_i}b_{j.i}({_{0}D}_t^{\beta_{j,i}}u_i)\quad i=1,\dots,M,
\end{equation}
for fixed positive integers $B_i$ and positive constants $b_{j,i}$. The fractional orders satisfy $0<\beta_{1,i}<\beta_{2,i}<\cdots<\beta_{B_i,i}<1$, and for a function $v$, $_{0}D_t^{\alpha}v$ is the Caputo fractional derivative defined by \cite{Caputo}
\begin{equation}\label{CaputoDer}_{0}D_t^{\alpha}v=\frac{1}{\Gamma(1-\alpha)}\int_0^t(t-s)^{-\alpha}\frac{\partial v(x,s)}{\partial s}ds,\end{equation}
and $\Gamma(\cdot)$ denotes the Gamma function.
Also, the functions $\mathbf{p}$ in \eqref{TimeMainPb0} are such that $\mathbf{p}=(p_1,\dots,p_M)$, $p_i(x,t)\in C^{\gamma,\gamma/2}(\bar{\Omega}\times[0,T])$ are negative functions.

Unlike the nonlocal parabolic system, we use the local Dirichlet boundary condition here, such that the condition $\mathbf{u}=0$ is enforced only on the boundary $\partial\Omega$. From a biological perspective, this means that there are no organisms on the boundary of $\partial\Omega$, but there may be some in the exterior $\Omega^c$ away from the boundary.

For this model, we are interested in studying the potential functions $\mathbf{p}(x,t)$ and nonlinear interaction functions $\mathbf{F}$, as well as the fractional orders $\beta_{j,i}$. The first two are determined by the knowledge of the flux data of $\mathbf{u}$ on a portion $\Gamma\subset\partial\Omega$ of the boundary $\partial\Omega$ (see Figure \ref{fig:time} for a schematic illustration), while the last one is obtained from the observation of the values of $\mathbf{u}$ at an interior point $x_0\in\Omega$ for all times $t\in(0,T)$. Formally, the measurement maps can be expressed in the following general form \begin{equation}\label{FormalMapTime}\Lambda^2_{\mathbf{p},\mathbf{F}}:\mathbf{q}\to \mathbf{p},\mathbf{F},\quad \Lambda^3_{\bm{\beta}}:\mathbf{q}\to \bm{\beta}.\end{equation} 

\begin{figure}
\centering
  \includegraphics[width=2.5in]{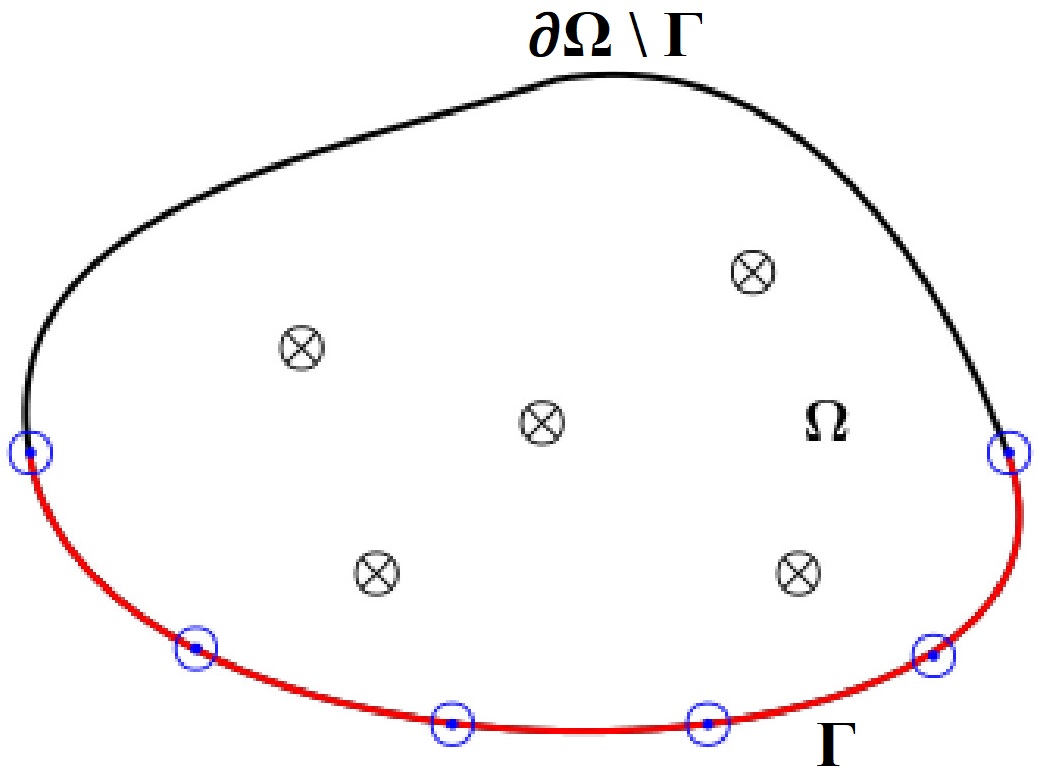}
  \caption{ The physical domain: $\Omega$,\ \ accessible boundary: $\Gamma$,\ \
  inaccessible boundary: $\partial\Omega\setminus\Gamma$,\ \ input source locations: $\otimes$,\ \ measurement locations: $\odot$. }
  \label{fig:time}
\end{figure}

Once again, we only consider the measurement given by $\langle\Lambda^2_{\mathbf{p},\mathbf{F}},\mathbf{h}\rangle$, for some smooth measuring means $h_i$. This means that our measurement map is only a weighted integral data of the partial boundary, as is often the case in physical situations \cite{DingZheng2020IPTimeFracHydrology}. Despite the decreased amount of measurement data and increased ill-posedness of this inverse problem, we are able to show that this limited average flux measurement is enough to uniquely identify the potentials $\mathbf{p}$ and interaction functions $\mathbf{F}$.

We are mainly concerned with the unique identifiability issue of the inverse problem \eqref{FormalMapTime}. In a formal manner, our main result for the time-fractional diffusion inverse problem can be roughly summarised into the following theorem:
\begin{theorem}\label{FormalThmTime}
    Suppose that $\mathbf{p},\mathbf{F}$ belong to general a-priori function spaces. Given the inject source $\mathbf{q}$, let $\Lambda^2_{\mathbf{p}^k,\mathbf{F}^k},\Lambda^3_{\bm{\beta}^k}$ be the measurement maps associated to \eqref{TimeMainPb0} for $k=1,2$. If 
    \[\Lambda^2_{\mathbf{p}^1,\mathbf{F}^1}=\Lambda^2_{\mathbf{p}^2,\mathbf{F}^2}\quad \text{ and }\quad \Lambda^3_{\bm{\beta}^1}=\Lambda^3_{\bm{\beta}^2}\] for all $\mathbf{q}$, then it holds that 
    \[\mathbf{p}^1=\mathbf{p}^2,\quad \bm{\beta}^1=\bm{\beta}^2 \text{ and }\quad \mathbf{F}^1=\mathbf{F}^2\quad \text{ in }\Omega\times(0,T).\]
\end{theorem}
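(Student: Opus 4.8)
The plan is to identify the three groups of unknowns in the order $\bm{\beta}$, then $\mathbf{p}$, then $\mathbf{F}$, so that each stage is carried out with an already-identified principal operator; throughout I use the natural a priori normalisations (to be recorded in Section~\ref{subsect:MainResDef}) that $\mathbf{F}(x,t,\mathbf{0})=\mathbf{0}$ and that $\mathbf{F}$ contains no term linear in $\mathbf{u}$, i.e. $D_{\mathbf{u}}\mathbf{F}(x,t,\mathbf{0})=\mathbf{0}$, which is exactly what lets one disentangle the diagonal potential $\mathbf{p}$ from $\mathbf{F}$ and is consistent with the quadratic interaction / logistic terms in the biological models. \textbf{Step 1 (fractional orders, via $\Lambda^3$).} Fix a time-independent admissible source $\mathbf{q}=\mathbf{q}(x)\ge\mathbf{0}$ with $\mathbf{q}(x_0)\neq\mathbf{0}$, so that $\mathbf{u}\ge\mathbf{0}$ holds automatically. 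Since $\mathbf{u}(x,0)=\mathbf{0}$, the diffusion, convection, reaction and (quadratic) nonlinear terms all vanish as $t\to 0^+$, and a dominant-balance analysis of the multi-term Caputo system gives $u_i(x_0,t)=\dfrac{q_i(x_0)}{b_{B_i,i}\,\Gamma(\beta_{B_i,i}+1)}\,t^{\beta_{B_i,i}}+o\big(t^{\beta_{B_i,i}}\big)$ as $t\to0^+$, reading off the largest order $\beta_{B_i,i}$; a recursive analysis of the small-time expansion — a known device for one-point observations of multi-term time-fractional equations, robust to the lower-order and nonlinear terms since the leading exponents are universal — then recovers $\beta_{B_i-1,i},\dots,\beta_{1,i}$. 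Since $\Lambda^3_{\bm{\beta}^1}=\Lambda^3_{\bm{\beta}^2}$ forces $u_i^1(x_0,\cdot)=u_i^2(x_0,\cdot)$, all these exponents coincide, whence $\bm{\beta}^1=\bm{\beta}^2=:\bm{\beta}$.

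\textbf{Step 2 (potentials, via $\Lambda^2$).} With $\bm{\beta}$ fixed, take $\mathbf{q}=\varepsilon\mathbf{g}$ with $\mathbf{g}\ge\mathbf{0}$, $0<\varepsilon\ll1$, and expand $\mathbf{u}=\varepsilon\mathbf{u}^{(1)}+\varepsilon^2\mathbf{u}^{(2)}+\cdots$. Because $D_{\mathbf{u}}\mathbf{F}(x,t,\mathbf{0})=\mathbf{0}$, the first linearisation decouples componentwise: $u_i^{(1)}$ solves $({_0D}_t^{\bm{\beta}}\mathbf{u}^{(1)})_i-d_i\Delta u_i^{(1)}+\alpha_i\cdot\nabla u_i^{(1)}=p_i u_i^{(1)}+g_i$ with zero initial/boundary data. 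Matching the $\varepsilon^1$-coefficients in $\Lambda^2_{\mathbf{p}^1,\mathbf{F}^1}=\Lambda^2_{\mathbf{p}^2,\mathbf{F}^2}$ gives $\langle\partial_\nu u_i^{(1),1}-\partial_\nu u_i^{(1),2},h_i\rangle_{\Gamma\times(0,T)}=0$ for all $g_i$. Setting $w_i=u_i^{(1),1}-u_i^{(1),2}$, which solves the $p_i^1$-equation with source $\delta p_i\,u_i^{(1),2}$ ($\delta p_i:=p_i^1-p_i^2$) and has vanishing lateral and initial data, I would pair it with the solution $v_i$ of the formal adjoint multi-term fractional parabolic equation with potential $p_i^1$, terminal condition $v_i(\cdot,T)=0$, and lateral data $v_i=h_i$ on $\Gamma\times(0,T)$, $v_i=0$ on $(\partial\Omega\setminus\Gamma)\times(0,T)$. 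The resulting Alessandrini-type identity collapses to $\int_{\Omega\times(0,T)}\delta p_i\,u_i^{(1),2}\,v_i\,dx\,dt=0$ for every admissible $g_i$. A Runge-type density theorem for the linear multi-term time-fractional equation shows $\{u_i^{(1),2}[g_i]\}$ is dense in $L^2(\Omega\times(0,T))$ — it suffices to allow $g_i$ of either sign, since the first linearisation is linear and any function is a difference of non-negative ones — so $\delta p_i\,v_i=0$ a.e.; and under the instrument condition $h_i\ge0$, $h_i\not\equiv0$, the weak maximum principle for the adjoint operator forces $v_i>0$ in $\Omega\times(0,T)$, hence $\delta p_i\equiv0$ and $\mathbf{p}^1=\mathbf{p}^2$.

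\textbf{Step 3 (interaction functions, via higher linearisation).} Since $\bm{\beta}$ and $\mathbf{p}$ are now common to the two systems, for each $k\ge2$ the expansion term $\mathbf{u}^{(k)}$ solves a linear system with the same diagonal principal part and potential, forced by $D^k_{\mathbf{u}}\mathbf{F}(x,t,\mathbf{0})$ contracted with products of $\mathbf{u}^{(1)},\dots,\mathbf{u}^{(k-1)}$. Differentiating $\Lambda^2_{\mathbf{p}^1,\mathbf{F}^1}=\Lambda^2_{\mathbf{p}^2,\mathbf{F}^2}$ $k$ times in $\varepsilon$ (with several small parameters) and repeating the adjoint pairing reduces, after polarising in the sources and using the componentwise decoupling of the first linearisation to isolate each entry, to $\int_{\Omega\times(0,T)}\big(D^k_{\mathbf{u}}F_i^1-D^k_{\mathbf{u}}F_i^2\big)(x,t,\mathbf{0})\big[u_{j_1}^{(1)}\cdots u_{j_k}^{(1)}\big]\,v_i\,dx\,dt=0$ for all choices of sources; density of products of first-linearised solutions together with $v_i>0$ yields $D^k_{\mathbf{u}}\mathbf{F}^1(\cdot,\cdot,\mathbf{0})=D^k_{\mathbf{u}}\mathbf{F}^2(\cdot,\cdot,\mathbf{0})$ for every $k\ge2$. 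Under the a priori assumption that $\mathbf{F}$ is real-analytic in $\mathbf{u}$ near the relevant (non-negative) range — or has a prescribed polynomial structure, in which case finitely many linearisations suffice — this gives $\mathbf{F}^1=\mathbf{F}^2$, finishing the proof; note that only the values of $\mathbf{F}(x,t,\cdot)$ on the range of admissible $\mathbf{u}\ge\mathbf{0}$ are ever probed, which is precisely the regime relevant for the biological model.

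\textbf{Main obstacle.} The heart of the argument is Steps 2--3, where a \emph{single scalar} averaged flux datum must be upgraded to pointwise identification. Three points require real work: (i) choosing the adjoint problem so that the contribution of the inaccessible boundary $\partial\Omega\setminus\Gamma$ drops out of the Alessandrini identity while $v_i$ stays strictly positive (which is why a sign condition on the instrument weight $h_i$ must be imposed); (ii) establishing the Runge-type density of (products of) solutions for the \emph{coupled multi-term} time-fractional system — the genuinely delicate step, since unique continuation and approximation theory for such operators are considerably more subtle than in the classical parabolic case; and (iii) verifying that restricting to non-negative sources, as forced by the constraint $\mathbf{u}\ge\mathbf{0}$, does not shrink the span exploited in (ii). By contrast, the well-posedness and uniform bounds underpinning the $\varepsilon$-expansion, and the small-time asymptotics of Step 1, are comparatively routine given the a priori regularity.
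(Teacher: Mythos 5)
Your three-stage architecture (linearise, pair with an adjoint solution, use positivity from the maximum principle, and treat the fractional orders separately via the one-point observation) matches the paper's, but each stage deviates in a way that matters. For $\bm{\beta}$ you use small-time asymptotics at $x_0$, whereas the paper extends the (uncoupled, time-analytic) first linearisation to $(0,\infty)$, takes the Laplace transform, and derives a sign contradiction as $s\to 0$ from the auxiliary function $(\hat{\mathbf{u}}_1-\hat{\mathbf{u}}_2)/(s^{\hat{\beta}^1}-s^{\hat{\beta}^2})$. Your leading-order claim $u_i(x_0,t)\sim c\,t^{\beta_{B_i,i}}$ is right, but the ``recursive analysis'' that is supposed to deliver $\beta_{B_i-1,i},\dots,\beta_{1,i}$ is only asserted: at sub-leading order the corrections coming from the potential term $p_iu_i$, the convection, and the lower-order Caputo terms interleave in the exponent ladder, and since in your ordering $\mathbf{p}$ has not yet been identified, the two expansions differ by unknown $\delta p_i$-contributions that can sit at exactly the powers of $t$ you are trying to read off. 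This step needs to be carried out, not just named. For $\mathbf{p}$, your Alessandrini identity $\int\delta p_i\,u_i^{(1),2}v_i=0$ plus density of $\{u_i^{(1),2}[g_i]\}$ is workable --- that density is not really a Runge theorem but closure of the range of the source-to-solution map, provable from injectivity of its adjoint --- but you neither prove it nor notice that the paper sidesteps it entirely: the paper pairs the equation with the adjoint solution $w$ of \eqref{AdjointPbTime} and uses completeness of the sources $\phi^n$ to conclude $w(\cdot\,;p^1)=w(\cdot\,;p^2)$ directly, then subtracts the two adjoint equations to get $(p^1-p^2)w=0$ with $w>0$. That trick transfers the density requirement from the (hard) solution side to the (given) source side and is the cleaner route.

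The genuine gap is Step 3. You invoke ``density of products of first-linearised solutions'' in $L^2(\Omega\times(0,T))$ to upgrade the integral identity to pointwise identification of $(x,t)$-dependent coefficients $D^k_{\mathbf{u}}F_i(x,t,\mathbf{0})$. Products of solutions do not form a linear space, and their density is precisely the Runge/CGO-type input that the paper explicitly states is \emph{not available} for multi-term time-fractional operators (see Remark \ref{rmk:SpaceExtNonConstant} and the closing remark of Section \ref{subsect:FTime}); the paper also notes that CGO constructions with exponential-in-time factors fail for Caputo derivatives. Accordingly, the paper restricts the admissible class (Definition \ref{adm1}) to \emph{constant} Taylor coefficients and, even then, determines a given coefficient only under the assumption that the other coefficients appearing in the same identity \eqref{SpaceEq} are known, using nothing more than positivity of $\int u^{(1)}u^{(2)}w$ from the maximum principle. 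If you intend to prove the statement at the level of generality the paper actually establishes, replace your density-of-products step by this positivity argument and state the ``all but one coefficient known'' (or a polarisation-in-components) hypothesis explicitly; as written, Step 3 rests on a tool that does not currently exist.
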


We will establish the conditions for Theorem \ref{FormalThmTime} to hold in Section \ref{subsect:MainResDef}. 
Observe that in this case, $\mathbf{p}$ depends on both the spatial variable $x$ and time variable $t$, as opposed to the case of the nonlocal parabolic system. Correspondingly, we expect that a larger amount of average flux data is required to reconstruct $\mathbf{p}(x,t)$ in comparison to $\mathbf{p}(x)$ (see Remark \ref{MoreDataPxtRemark}).

\subsection{Background Motivation and Technical Developments}

Coupled systems of partial differential equations have been extensively studied and applied to model various interconnected physical, biological, and social phenomena \cite{May1976CoupledBioModel,Du2002PopulationCoupled,PengWangZhangZhou2023CoupledEpidemic,JinWangWu2023AlarmTaxisCoupled,LouYuan2020JMBPopulationCoupled,LouYuan2020ProcAMSPopulationCoupled,gaston2009coupled,Ambrosetti2007coupled,wang2004degenerate}. These systems often involve complex dynamics and interactions between different components. However, in many real-world scenarios, the dynamics of the interconnected components exhibit long memory effects and non-local behavior, which cannot be adequately captured by classical differential equations. Fractional calculus provides a powerful mathematical framework to address these challenges.

Henceforth, in recent years, the study of fractional single equations and coupled systems has gained significant attention across multiple scientific disciplines. In particular, the associated forward problems have been extensively studied  \cite{DuJuLu2019Galerkin1DNonlocalDiffusion,TianDu2013NonlocalDiffusion,NochettoOtarolaSalgado2015FracDiffusion,DuGunzburgerLehoucqZhouNonlocalDiffusion,SakamotoYamamoto2011JMAATimeFracWave,Luchko2011MultiTimeFracMaxPrinciple,Luchko2010TimeFracEqIBVPExist,LuchkoYamamoto2016TimeFracExistence,GorenfloLuchkoYamamoto2015FCAATimeFracEqExist,JinLazarovPasciakZhou2015TimeFracFEM}. However, results are still limited for the corresponding inverse problems. There have been some recent works on the inverse problem for the single fractional equation. 
Some works for the parabolic space-fractional equation include \cite{DEliaGunzburger2016IdentifyDiffusionNonlocal,JiaWu2018CarlemanEstimateFracDiffusion,JingJiaPeng2020NonlocalDiffusionInversePb,DingZheng2021IPSpaceFrac}, 
while works for the time-fractional diffusion equation include inverse initial boundary value problems \cite{JinRundell2015IP_InverseTimeFrac,LiuYamamoto2010InverseTimeFrac,Murio2008InverseHeatCaputo,LiuYamamotoYan2015InverseTimeFrac,RundellXuZuo2013InverseTimeFrac,ZhangYangLi-CAM2024-IdentifySourceInitialHadamardFrac-Numerical,JingJiaSong2024AMLTimeFracInversePb}, 
inverse source problems \cite{WeiLiLi2016IP_InverseTimeFracSource,LiuDuLi2024TimeFracInverse,ZhangXu2011IP-InverseTimeFrac,Liu2017MultiTermTimeFracInverseSource,LiuRundellYamamotoFCAAInverseTimeFracSource,ZhangYangLi-CAM2024-IdentifySourceInitialHadamardFrac-Numerical,SakamotoYamamoto2011JMAAFracTimeInversePb,JinRundell2015IP_InverseTimeFrac}, 
inverse coefficient problems \cite{MillerYamamoto2013InverseTimeFracOrderLinear,YanWei2023-InverseProbFracOrderLinear,JinRundell2012IP_InverseTimeFrac1D,JingJiaSong2024AMLTimeFracInversePb,JinRundell2015IP_InverseTimeFrac,DingZheng2020IPTimeFracHydrology,KaltenbacherRundell2019FracTimeInversePbPotential,JinZhou2021TimeFracInversePbOrderPotential,JinRundell2012IP_InverseTimeFrac1D}, 
and inverse fractional order problems \cite{LiZhangJiaYamamoto2013IP_InverseDiffusionCoefTimeFracOrder,MillerYamamoto2013InverseTimeFracOrderLinear,RundellYamamoto2023InverseTimeFracOrder,YanWei2023-InverseProbFracOrderLinear,JinKian2022SIMATimeFracOrderInversePb,JinKian2023FracTimeOrderInversePb,JinKian2021FracTimeOrderInversePb,JinZhou2021TimeFracInversePbOrderPotential,LiuYamamoto2023IPTimeFracCoupledEquationInversePb,LiLiuYamamoto2019IPTimeFracOrder,HongJinKian2024identificationspatiallydependentvariableorder}. 
Space-time fractional problems have also been considered in the theoretical setting in \cite{lin2023calderonproblemnonlocalparabolic,LaiLinRuland2020SIMASpaceTimeTogetherFracInversePb}, and with numerical methods in \cite{SongZhengJiang2021-SpaceTimeNonlocalBayesNumericsOnly,Tartar+T+Ulusoy2016SpaceTimeFracNumerics}. 
At the same time, inverse problems have been considered for coupled parabolic systems in the local classical case (see, for instance \cite{LiuMouZhang2022InversePbMeanFieldGames,LiuZhang2022-InversePbMFG,klibanov2023lipschitz,klibanov2023mean1,klibanov2023mean2,LiuZhangMFG3,klibanov2023holder,liu2023stability,imanuvilov2023lipschitz1,imanuvilov2023unique,klibanov2023coefficient1,klibanov2023coefficient2,imanuvilov2023global,imanuvilov2023lipschitz2,ding2023determining,LiuZhangMFG4,liu2023determining,li2023inverse,LiLoCAC2024,li2024inverse,liulo2024determiningstatespaceanomalies,DouYamamoto2019InversePbCoupledSchrodinger,LinLiuLiuZhang2021-InversePbSemilinearParabolic-CGOSolnsSuccessiveLinearisation,CGNP2013InversePbCoupledParabolic,RoquesCristofol2012InversePbPredPrey,LiuLoZhang2024decodingMFG,DingLiuZheng2023JMBInversePbBio}).

Yet, there are hardly any works on inverse problems for fractional coupled systems. The only existing works in this direction are \cite{RenHuangYamamoto2021JIIPTimeFracInversePbCoupled,LiHuangLiu2024-TimeFracCoupledDiffusion-InversePbIdentifyExponent,SratiOulmelkAfraitesHadri2023DCDSSInversePbTimeFracSystem,FengLiuLu2024CoupledDiffusionInversePbNumerics} and they only addressed the recovery of the fractional orders, potentials or initial values for the time-fractional coupled diffusion system. Unlike previous works, our inverse problem is different. Our main novelties lie in the following key aspects:
\begin{itemize}
    \item Firstly, we consider the inverse problem for the space-nonlocal nonlinear coupled parabolic system, and recover the potentials $\mathbf{p}$ and nonlinear interaction functions $\mathbf{F}$, using weighted measurements on an accessible exterior region $\Omega_a\subset\Omega^c$. Such a problem has never been considered elsewhere, to the best of our knowledge.
    \item Secondly, we consider the inverse problem for the time-fractional diffusion system, and recover the potentials $\mathbf{p}$, nonlinear interaction functions $\mathbf{F}$ and fractional orders $\bm{\beta}$. This is the first theoretical work considering such inverse problems.
    \item Thirdly, we give some applications of our results to biological situations.
\end{itemize}

Indeed, there have not been any previous studies on the inverse problem for the space-nonlocal nonlinear coupled parabolic system. Previous works have only considered the nonlocal parabolic problem with a single equation, as in \cite{DEliaGunzburger2016IdentifyDiffusionNonlocal,JiaWu2018CarlemanEstimateFracDiffusion,JingJiaPeng2020NonlocalDiffusionInversePb,DingZheng2021IPSpaceFrac}. In this work, we consider a nonlocal parabolic system, with an additional local drift. Such a mixed local-nonlocal equation arises when the phenomenon under investigation undergoes anomalous diffusion \cite{meerschaert2006fractional,DuGunzburgerLehoucqZhouNonlocalDiffusion} together with advective transport \cite{TangDai2024NonlocalAdvectionPopulation} (e.g. shifting habitats, river organisms being washed away by currents or viruses being carried by the wind). However, the inverse problems for this type of system have never been studied in existing literature. 

We address this inverse problem, using the weighted measurement map \eqref{FormalMapExt}. This leads to less data as we explained in the previous section, thereby increasing the difficulty of this problem. Nevertheless, we are still able to obtain the unique identifiability results for $\mathbf{p}$ and $\mathbf{F}$, as shown in the subsequent sections. Furthermore, the nonlinearity and coupling in the interaction functions $\mathbf{F}$ pose major difficulties to our analysis. Previous works in the local case have relied on the construction of complex-geometric-optics (CGO) solutions \cite{LiuMouZhang2022InversePbMeanFieldGames,LiuZhang2022-InversePbMFG,LiuZhangMFG3,LiuZhangMFG4,liu2023determining,li2023inverse,LiLoCAC2024,li2024inverse,liulo2024determiningstatespaceanomalies,LinLiuLiuZhang2021-InversePbSemilinearParabolic-CGOSolnsSuccessiveLinearisation,LiuLoZhang2024decodingMFG}, which do not make sense in the nonlocal case. Therefore, we devise a new method, by incorporating ideas from the case of the single nonlocal equation in \cite{DingZheng2021IPSpaceFrac} and the case of the coupled local system in  \cite{DingLiuZheng2023JMBInversePbBio}, and rely on carefully chosen input source functions $\mathbf{q}$ and high-order linearisation to obtain our result.

Even in the case of the time-fractional diffusion system, such weighted integral data on an accessible part of the
boundary pose great difficulty. Compared with the direct flux measurement (or Neumann measurement data) on the boundary, i.e. $\partial_\nu \textbf{u}|_{\partial\Omega\times(0,T)}$ (see for instance \cite{JinRundell2012IP_InverseTimeFrac1D}), our measurement map \eqref{FormalMapTime} is an average flux measurement on a portion of the boundary $\Gamma\subset\partial\Omega$, which is easier to measure in practical situations \cite{HydrologyPractical1,HydrologyPractical2}, yet the inverse problem is more severely ill-posed. This represents another distinctive aspect that sets our work apart from prior studies.

Moreover, similar to the space-nonlocal case, the highly nonlinear coupling in $\mathbf{F}$ significantly increases the complexity of the problem. Previous works on local parabolic systems use CGO solutions, as we have discussed above. But such CGO solutions typically involve an exponential-in-time term, which does not apply in the case of the fractional time derivative. Therefore, a new method is required, and we will provide the details in Section \ref{subsect:FTime}.

With regard to the recovery of the fractional orders $\bm{\beta}$, the only previous works addressing this issue are
\cite{RenHuangYamamoto2021JIIPTimeFracInversePbCoupled} and \cite{LiHuangLiu2024-TimeFracCoupledDiffusion-InversePbIdentifyExponent}. However, \cite{RenHuangYamamoto2021JIIPTimeFracInversePbCoupled} considered only a single time-fractional order, while \cite{LiHuangLiu2024-TimeFracCoupledDiffusion-InversePbIdentifyExponent} considered coupling in the fractional order. Instead, we consider  a system of equations coupled in the interaction functions $\mathbf{F}$ rather than the time-fractional term. Furthermore, our time-fractional term ${_{0}D}_t^{\bm{\beta}}$ consists of multiple time-fractional orders $\beta_{j,\cdot}$ with variable coefficients $b_{j,\cdot}(x)$. This time-fractional form has not been explored in existing literature. Additionally, we consider a different measurement map, given by the source-to-boundary measurement map $\Lambda^3$ in \eqref{FormalMapTime}. Hence, our method differs technically from previous works in this area.

The rest of the paper is organised as follows. In Section \ref{sect:prelims}, we present some preliminaries and statements. Section \ref{sect:linearzation} is devoted to the study of the method of high-order linearisation, which is necessary to treat the nonlinear coupling in the problem. The proofs of the unique determination for the coupled systems are provided in Sections \ref{sect:SpaceFracProof} and \ref{sect:TimeFracProof}. Finally, we give some applications of our results to various biological phenomena in Section \ref{sec:apply}.

\section{Preliminaries}\label{sect:prelims}

\subsection{Nonlocal Space Vector Calculus}

In this section, we briefly review the concepts of nonlocal calculus that are useful in what follows. We mainly focus on the nonlocal space operator $\mathcal{L}$, based on the references \cite{DuGunzburgerLehoucqZhouNonlocalDiffusion,DuGunzburgerLehoucqZhouNonlocalVectorCalculus}. For the time-fractional Caputo operator defined by \eqref{CaputoDer}, we refer readers to \cite{AtanganaBook,JinbookTimeFrac} for more details.

For vector functions  $\bm{\nu}(x,y),\ \bm{\alpha}(x,y):\mathbb{R}^d\times \mathbb{R}^d\rightarrow \mathbb{R}^k$ ($k\in\mathbb{N}$) such that $\bm{\alpha}$ is antisymmetric, i.e., $\bm{\alpha}(x,y)=-\bm{\alpha}(y,x)$, the nonlocal divergence operator $\mathcal{D}(\bm{\nu}):\mathbb{R}^d\rightarrow \mathbb{R}$ on $\bm{\nu}$ is defined as
\begin{equation}\label{DivDef}
\mathcal{D}(\bm{\nu})(x):=\int_{\mathbb{R}^d}(\bm{\nu}(x,y)+\bm{\nu}(y,x))\cdot \bm{\alpha}(x,y)\,dy,\quad \text{for }x\in \mathbb{R}^d.
\end{equation}
Its adjoint $\mathcal{D}^*:\mathbb{R}^d\times \mathbb{R}^d\rightarrow \mathbb{R}^k$ is given by
\begin{equation}\label{GradDef}
\mathcal{D}^*(u)(x,y):=-(u(y)-u(x)) \bm{\alpha}(x,y),\quad \text{for }x,y\in \mathbb{R}^d,
\end{equation}
for any scalar function $u(x):\mathbb{R}^d\rightarrow \mathbb{R}$. Therefore, the operator $-\mathcal{D}^*$ is commonly called the nonlocal gradient.

Using \eqref{DivDef} and \eqref{GradDef}, we have the following relation: For any second-order symmetric positive definite tensor $\bm{\Theta}(x,y)$, i.e. $\bm{\Theta}$ satisfies $\bm{\Theta}(x,y)=\bm{\Theta}(y,x)$  and $\bm{\Theta}=\bm{\Theta}^{T}$, 
\begin{equation}\label{OpDef}
\mathcal{D}(\bm{\Theta}\cdot\mathcal{D}^* u)(x):=-2\int_{\mathbb{R}^d}(u(y)-u(x))\bm{\alpha}(x,y)\cdot(\bm{\Theta}\cdot\bm{\alpha}(x,y))\,dy,\quad \text{for }x\in \mathbb{R}^d.
\end{equation}
Therefore, the operator $\mathcal{L}$ in \eqref{NonlocalLDef} can also be written as 
\begin{equation*}
-\mathcal{L}u=\mathcal{D}(\bm{\Theta}\cdot\mathcal{D}^* u):\mathbb{R}^d\rightarrow \mathbb{R},\quad \text{with } \gamma=\bm{\alpha}\cdot(\bm{\Theta}\cdot\bm{\alpha}).
\end{equation*}

For any open subset $\Omega\subset \mathbb{R}^d$, the corresponding interaction domain is defined by
\begin{equation}\label{InteractDomain}
\Omega_{\mathcal{I}}:=\{y\in \Omega^c:\bm{\alpha}(x,y)\neq 0\ \ \text{for}\ x \in\Omega\},
\end{equation}
so that $\Omega_{\mathcal{I}}$ consists of those points outside of $\Omega$ that interact with points in $\Omega$. Then, corresponding to the divergence operator $\mathcal{D}$ defined in \eqref{DivDef}, the nonlocal interaction operator $\mathcal{N}(\bm{\nu}):\mathbb{R}^d\rightarrow \mathbb{R}$ on $\bm{\nu}$ is defined by
\begin{equation}\label{NeumannDef}
\mathcal{N}(\bm{\nu})(x):=-\int_{\Omega\cup\Omega_I}(\bm{\nu}(x,y)+\bm{\nu}(y,x))\cdot\bm{\alpha}(x,y)\,dy,\quad\text{for }x\in\Omega_{\mathcal{I}}.
\end{equation}
Physically, the integral $\int_{\Omega_{\mathcal{I}}} \mathcal{N}(\bm{\nu})\,dx$ can be viewed as the nonlocal flux out of $\Omega$ into $\Omega_{\mathcal{I}}$ (refer to \cite{DuGunzburgerLehoucqZhouNonlocalVectorCalculus}).

With $\mathcal{D}$ and $\mathcal{N}$ defined in \eqref{DivDef} and \eqref{NeumannDef}, respectively, it is known that the nonlocal Gauss theorem \cite[Theorem 4.1]{DuGunzburgerLehoucqZhouNonlocalVectorCalculus} holds:
\begin{equation}
\int_{\Omega}\mathcal{D}(\bm{\nu})\,dx=\int_{\Omega_{\mathcal{I}}}\mathcal{N}(\bm{\nu})\,dx.
\end{equation}
Furthermore, the nonlocal Green's first identity \cite[Corollary 4.2]{DuGunzburgerLehoucqZhouNonlocalVectorCalculus} holds for scalar functions $u(x)$, $v(x)$:
\begin{equation}\label{Green}
\int_{\Omega}v\mathcal{D}(\bm{\Theta}\cdot\mathcal{D}^*u)\,dx-\int_{\Omega\cup\Omega_{\mathcal{I}}}\int_{\Omega\cup\Omega_{\mathcal{I}}}(\mathcal{D}^*v)\cdot(\bm{\Theta}\cdot\mathcal{D}^*u)\,dy\,dx
=\int_{\Omega_{\mathcal{I}}}v\mathcal{N}(\bm{\Theta}\cdot\mathcal{D}^*u)\,dx.
\end{equation}

\subsection{Properties of Nonlocal Operators}
Next, we show some properties of nonlocal operators that are essential for the proof. These are the maximum principles. We first begin with the maximum principle for \eqref{SpaceFracMainPb}.

\begin{lemma}[Weak Maximum Principle]\label{WMP} 
Assume $u\in C^{2,1}_{0}(\Omega\times(0,T])\cap C(\mathbb{R}^d\times[0,T])$. For $p\leq0$, if 
\[\partial_t u - \mathcal{L}u + \alpha\cdot\nabla u - p(x)u \geq 0\quad \text{ in }\Omega\times(0,T], \quad \quad u\geq 0 \quad \text{ in }\Omega^c\times(0,T],\] then 
\[u\geq 0\quad \text{ in }\Omega\times(0,T].\]
\end{lemma}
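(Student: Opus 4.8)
The plan is to prove the weak maximum principle by a contradiction argument based on examining a point where the minimum of $u$ is attained, exploiting the sign of the nonlocal operator there and using a standard time-regularisation trick to handle the parabolic nature of the problem. First I would suppose, for contradiction, that $u$ becomes negative somewhere in $\Omega\times(0,T]$, i.e. $m:=\inf_{\bar\Omega\times[0,T]} u < 0$. Since $u$ is continuous on $\mathbb{R}^d\times[0,T]$ (and vanishes at $t=0$ in $\Omega$, is $\ge 0$ on $\Omega^c\times(0,T]$), the infimum is attained at some interior point $(x_0,t_0)$ with $x_0\in\Omega$ and $t_0>0$.

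To make the argument rigorous at the minimum in time, I would first introduce the perturbed function $v_\varepsilon(x,t):=u(x,t)+\varepsilon t$ for small $\varepsilon>0$, or equivalently $v_\varepsilon(x,t):=u(x,t)-\varepsilon(T-t)$; then the strict inequality in the evolution term forces the minimum of $v_\varepsilon$ over $\bar\Omega\times[0,T]$, if negative, to occur at an interior-in-time point, and one sends $\varepsilon\to0$ at the end. At the minimising point $(x_0,t_0)$ one has: (i) $\partial_t v_\varepsilon(x_0,t_0)\le 0$ because $t_0$ is a minimum in time (or $t_0=T$, in which case the one-sided derivative is $\le 0$); (ii) $\nabla v_\varepsilon(x_0,t_0)=0$ since $x_0$ is an interior spatial minimum and $v_\varepsilon$ is $C^1$ in $x$; and (iii) crucially, $\mathcal{L}v_\varepsilon(x_0,t_0) = 2\int_{\mathbb{R}^d}(v_\varepsilon(y,t_0)-v_\varepsilon(x_0,t_0))\gamma(x_0,y)\,dy \ge 0$, because $\gamma\ge 0$ and $v_\varepsilon(y,t_0)\ge v_\varepsilon(x_0,t_0)$ for all $y$ (the value at $y\in\Omega^c$ is controlled using $u\ge0$ there together with the smallness of $\varepsilon$, so that $v_\varepsilon\ge$ the negative minimum value). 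Plugging into the differential inequality: $0 \le \partial_t v_\varepsilon - \mathcal{L}v_\varepsilon + \alpha\cdot\nabla v_\varepsilon - p(x_0)v_\varepsilon + (\text{terms from }\varepsilon)$. At $(x_0,t_0)$, the first three computed quantities give $\partial_t v_\varepsilon \le 0$, $-\mathcal{L}v_\varepsilon \le 0$, $\alpha\cdot\nabla v_\varepsilon = 0$, while $-p(x_0)v_\varepsilon(x_0,t_0) = -p(x_0)\cdot(\text{negative number})< 0$ since $p\le 0$ (here one needs $p(x_0)<0$ strictly, or handles $p(x_0)=0$ by absorbing it, which is fine because the $\varepsilon$-perturbation already produces a strictly negative contribution $-\varepsilon$ or similar). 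This yields a strict inequality $0 < 0$ after accounting carefully for the regularising term, the desired contradiction; letting $\varepsilon\to 0$ then gives $u\ge0$ in $\Omega\times(0,T]$.

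The main obstacle I anticipate is the rigorous handling of the time variable at $t_0$, in particular ruling out that the contradiction evaporates when $t_0=T$ or when the infimum is only approached (not attained) — this is exactly what the $\varepsilon t$ (or $-\varepsilon(T-t)$) perturbation is designed to fix, together with the continuity/compactness giving attainment of the minimum. A secondary delicate point is justifying that $\mathcal{L}v_\varepsilon(x_0,t_0)$ is well-defined and nonnegative: one must check the integral $\int_{\mathbb{R}^d}(v_\varepsilon(y,t_0)-v_\varepsilon(x_0,t_0))\gamma(x_0,y)\,dy$ converges, which follows from the kernel bound \eqref{KernelAssump1} (giving integrability away from the singularity, since $v_\varepsilon$ is bounded) and from the $C^{2,1}_0$ regularity of $u$ near $x_0$ (giving a second-order Taylor expansion that tames the singularity $|x_0-y|^{-d-2s_i}$ for $s_i<1$, in the principal-value sense). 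Once these two technical points are in place, the sign bookkeeping is routine, and the strict negativity coming from either $p(x_0)v_\varepsilon(x_0,t_0)$ or the $\varepsilon$-term closes the argument.
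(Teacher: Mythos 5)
Your argument is correct, and it closes the proof by a different mechanism than the paper does. Both proofs run the same contradiction at a negative minimum $(x_0,t_0)$ and both observe that $\nabla u$ vanishes, $\partial_t u\le 0$, $\mathcal{L}u\ge 0$ and $-pu\le 0$ there; the difference is where the \emph{strict} inequality comes from. You import the classical parabolic trick $v_\varepsilon=u+\varepsilon t$, which turns the differential inequality into a strictly positive one ($\ge\varepsilon$, using $p\le0$ and $t\ge0$) while every term at the minimum of $v_\varepsilon$ remains $\le 0$; note that your first choice $u+\varepsilon t$ is the right one, since it keeps $v_\varepsilon\ge 0$ on $\Omega^c\times(0,T]$, whereas $u-\varepsilon(T-t)$ would spoil the global-minimum property needed for $\mathcal{L}v_\varepsilon\ge0$. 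The paper instead gets strictness from the nonlocality itself: writing $r=\mathrm{dist}(x_0,\partial\Omega)$ and splitting the integral defining $\mathcal{L}u(x_0,t_0)$, the far-field piece is bounded using $u\ge0$ outside $\Omega$ and the kernel \emph{lower} bound \eqref{KernelAssump1} by $2\gamma_*\sum_i\int u(x_0,t_0)\,|x_0-y|^{-n-2s_i}\,dy<0$, so no time perturbation is needed. Your route uses less about the kernel (only $\gamma\ge0$ plus integrability, not the lower bound $\gamma_*$) and is the one that survives in the purely local setting; the paper's route is shorter but is specific to nonlocal operators. The two technical points you flag are real but benign: convergence of the nonlocal integral at the minimum is automatic since the integrand $(v_\varepsilon(y,t_0)-v_\varepsilon(x_0,t_0))\gamma(x_0,y)$ is pointwise nonnegative and the $C^{2,1}$ regularity tames the singularity, and your self-correction on the sign of $-p(x_0)v_\varepsilon(x_0,t_0)$ (it is only $\le0$, not $<0$, when $p(x_0)=0$) is exactly why the $\varepsilon$-term, rather than the potential, must carry the strictness. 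Both proofs share the same unstated reliance on $u\ge0$ at $t=0$ in $\Omega$ to rule out the minimum sitting on the initial slice, so this is not a defect of your proposal relative to the paper.
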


\begin{proof} Since $u$ is nonnegative outside $\Omega\times(0,T]$, assume by contradiction that $u(x,t)<0$ for some $(x,t)\in \Omega\times(0,T]$. Since $\Omega$ is a bounded open set, a minimal point $(x_0,t_0)$ is attained in $\Omega\times(0,T]$ and satisfies $u(x_0,t_0)<0$. Since $u\geq0$ outside $\Omega$, this means that $u(x_0,t_0)$ is a minimum in the whole space $\mathbb{R}^d\times(0,T]$, and we deduce that 
\[\left.\partial_t u\right|_{(x_0,t_0)}=\nabla u|_{(x_0,t_0)}=0.\] 

Set $r=dist(x_0,\partial\Omega)$ and let $B_r(x_0)$ denote the sphere with centre $x_0$ and radius $r$. Since $u(x_0,t_0)$ is a minimum in $\mathbb{R}^d\times(0,T]$, for any $y$ in $B_{2r}(x_0)\subset \mathbb{R}^d$, we have that $u(x_0,t_0)-u(y,t_0)\leq 0$. On the other hand, if $y\in \mathbb{R}^d\setminus B_{2r}(x_0)\subset\Omega^c$, then $|x-y|\geq|y-x_0|-|x-x_0|\geq r$. Furthermore, by the assumptions of the lemma, $u(y,t_0)\geq 0$. 

Combining these two estimates, we have
\begin{align*}
0&\leq\left.\left(\partial_t u-\mathcal{L}u+ \alpha\cdot\nabla u-p(x)u\right)\right|_{(x_0,t_0)}\\
&=\left.\partial_t u \right|_{(x_0,t_0)}+2\int_{\mathbb{R}^d}(u(x_0,t_0)-u(y,t_0)) \gamma(x_0,y)\,dy+\alpha\cdot\nabla u|_{(x_0,t_0)}-p(x_0)u(x_0,t_0)\\
&\leq 2\sum_{i=1}^N\int_{\mathbb{R}^d}\frac{\gamma_*(u(x_0,t_0)-u(y,t_0))}{|x_0-y|^{n+2s_i}}\,dy\\
&=2\gamma_*\sum_{i=1}^N\int_{B_{2r}(x_0)}\frac{u(x_0,t_0)-u(y,t_0)}{|x_0-y|^{n+2s_i}}\,dy+2\gamma_*\sum_{i=1}^N\int_{\mathbb{R}^d\setminus B_{2r}(x_0)}\frac{u(x_0,t_0)-u(y,t_0)}{|x_0-y|^{n+2s_i}}\,dy\\
&\leq 2\gamma_*\sum_{i=1}^N\int_{\mathbb{R}^d \setminus B_{2r}(x_0)}\frac{u(x_0,t_0)}{|x_0-y|^{n+2s_i}}\,dy<0
\end{align*}
by \eqref{KernelAssump1}. 
This is contradictory, so it must be that $u\geq 0$ in $\Omega\times(0,T)$.
\end{proof}

\begin{lemma}[Strong Maximum Principle]\label{StrongMaxPrinciple}
    Assume $u\in C^{2,1}_{0}(\Omega\times(0,T])\cap C(\mathbb{R}^d\times[0,T])$. Suppose $p\leq0$ and 
    \[\partial_t u - \mathcal{L}u + \alpha\cdot\nabla u - p(x)u \geq 0\quad \text{ in }\Omega\times(0,T], \quad \quad u\geq 0 \quad \text{ in }\Omega^c\times(0,T].\] 
    Then there exists $t_0\in(0,T]$ such that $u(x,t_0)>0$ in $\Omega\times\{t_0\}$, unless $u\equiv0$ in $\mathbb{R}^d\times\{t_0\}$.
\end{lemma}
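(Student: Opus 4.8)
The plan is to combine the Weak Maximum Principle (Lemma 2.7), which already guarantees $u \geq 0$ in $\Omega \times (0,T]$, with a pointwise argument at an interior zero. Suppose, for contradiction, that for some time level $t_0 \in (0,T]$ there is an interior point $x_0 \in \Omega$ with $u(x_0,t_0) = 0$, but $u \not\equiv 0$ in $\mathbb{R}^d \times \{t_0\}$. By Lemma 2.7 we know $u \geq 0$ everywhere at time $t_0$ (inside $\Omega$ from the weak principle, outside $\Omega$ by hypothesis), so $(x_0,t_0)$ is in fact a spatial minimum of $u(\cdot,t_0)$ over all of $\mathbb{R}^d$, with minimum value $0$. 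Exactly as in the proof of Lemma 2.7, this forces $\nabla u|_{(x_0,t_0)} = 0$, and since $t_0$ is a point where the spatial minimum value $0$ is attained and $u \geq 0$ throughout $\Omega \times (0,T]$ with $u(x,0)=0$, one also controls the sign of $\partial_t u|_{(x_0,t_0)}$: if $t_0 < T$ the time-minimality in an appropriate sense is not automatic, so I would instead argue directly from the differential inequality without needing the sign of $\partial_t u$, or restrict attention to showing $\partial_t u|_{(x_0,t_0)} \le 0$ is impossible to exploit — see below.

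The key step is the evaluation of the nonlocal term. Plugging $(x_0,t_0)$ into the inequality $\partial_t u - \mathcal{L}u + \alpha\cdot\nabla u - p(x)u \geq 0$ and using $u(x_0,t_0)=0$, $\nabla u|_{(x_0,t_0)}=0$, $p(x_0)u(x_0,t_0)=0$, we obtain
\[
0 \leq \partial_t u\big|_{(x_0,t_0)} - \mathcal{L}u(x_0,t_0) = \partial_t u\big|_{(x_0,t_0)} - 2\int_{\mathbb{R}^d}\big(u(y,t_0) - u(x_0,t_0)\big)\gamma(x_0,y)\,dy,
\]
that is, $\partial_t u|_{(x_0,t_0)} \geq 2\int_{\mathbb{R}^d} u(y,t_0)\,\gamma(x_0,y)\,dy \geq 0$ since $u(\cdot,t_0)\geq 0$ and $\gamma \geq 0$. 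Because $u \not\equiv 0$ at time $t_0$ and $\gamma$ is bounded below in the sense of \eqref{KernelAssump1} (so $\gamma(x_0,y)>0$ for $y$ near $x_0$, in fact for a.e.\ $y$), the integral $\int_{\mathbb{R}^d} u(y,t_0)\gamma(x_0,y)\,dy$ is strictly positive — \emph{provided} $u(\cdot,t_0)$ is not identically zero on the set where $\gamma(x_0,\cdot)$ is positive, which by the kernel bound is all of $\mathbb{R}^d$. Hence $\partial_t u|_{(x_0,t_0)} > 0$ strictly. Since $u(x_0,\cdot)$ attains the value $0$ at $t_0$ and $u(x_0,\cdot) \geq 0$ for all $t \in [0,T]$ (again by Lemma 2.7 and the exterior/initial conditions), $t_0$ is a minimum of $t \mapsto u(x_0,t)$ on $[0,T]$; if $t_0 \in (0,T)$ this forces $\partial_t u|_{(x_0,t_0)} = 0$, and if $t_0 = T$ it forces $\partial_t u|_{(x_0,T)} \leq 0$. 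Either way this contradicts $\partial_t u|_{(x_0,t_0)} > 0$, completing the argument.

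The main obstacle is the handling of the time derivative: unlike the classical parabolic strong maximum principle, we are asked only for the conclusion "there exists $t_0$ such that $u(x,t_0)>0$ in $\Omega$ unless $u\equiv 0$ at that time," rather than propagation of positivity forward in time, so I must be careful about which time levels the minimum-in-$t$ reasoning applies to. The cleanest route is to phrase the contradiction hypothesis as: \emph{for every} $t_0\in(0,T]$, either $u(\cdot,t_0)$ vanishes somewhere in $\Omega$ or $u\equiv 0$ at $t_0$; pick a $t_0$ where $u\not\equiv 0$ and $u(x_0,t_0)=0$ for some $x_0\in\Omega$ (if no such $t_0$ exists we are already done), and then note that $u(x_0,\cdot)\ge 0$ on $[0,T]$ with $u(x_0,0)=0$; since $u(x_0,t_0)=0$ is the minimum value, $\partial_t u(x_0,t_0)\le 0$ whenever the derivative exists (which it does by the $C^{2,1}$ assumption), giving the needed contradiction with $\partial_t u(x_0,t_0)>0$. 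I would also need to double-check the integrability of $u(y,t_0)\gamma(x_0,y)$ at infinity — this follows from $u$ being bounded (it lies in $C(\mathbb{R}^d\times[0,T])$ and vanishes outside... actually $u$ need not vanish outside $\Omega$ here, only be $\ge 0$, so one uses the decay $\gamma(x_0,y)\lesssim |x_0-y|^{-n-2s_i}$ from \eqref{KernelAssump1} together with a growth bound on $u$, or simply restricts the strict-positivity observation to a ball where it already suffices).
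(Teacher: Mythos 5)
Your proposal is correct and follows essentially the same route as the paper: nonnegativity from the weak maximum principle, evaluation of the differential inequality at an interior spatial zero $(x_0,t_0)$ where $\nabla u=0$, and the observation that $-\mathcal{L}u(x_0,t_0)=-2\int_{\mathbb{R}^d}u(y,t_0)\gamma(x_0,y)\,dy$ together with the strict positivity of the kernel forces $u(\cdot,t_0)\equiv 0$. The only (harmless) difference is bookkeeping: the paper sets $\partial_t u=0$ at the minimum and concludes the integral vanishes, while you derive $\partial_t u>0$ and contradict time-minimality, which in fact treats the endpoint case $t_0=T$ slightly more carefully.
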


\begin{proof}
    We already know that $u\geq0$ in $\mathbb{R}^d\times(0,T]$, since the assumption states that $u\geq0$ outside $\Omega$, while Lemma \ref{WMP} gives the result in $\Omega$. Hence, if $u\equiv0$ at some time $t_0\in(0,T]$, then we are done. Otherwise, suppose on the contrary that for all $t\in(0,T]$, there exists $x_0\in\Omega$ such that $u(x_0,t)=0$. Then $\left.\partial_t u \right|_{(x_0,t)}=0$ for all $t\in(0,T)$. Furthermore, since $u(x,t)\geq0$ in $\Omega\times(0,T]$ and $u(x_0,t)=0$, $u$ is either constant near $x_0$ or is a minimal point, so $\nabla u|_{(x_0,t_0)}=0$ for some $t_0\in(0,T]$. This gives that
    \begin{align*}
        0&\leq\left.\left(\partial_t u -\mathcal{L}u+\alpha\cdot\nabla u-p(x)u\right)\right|_{(x_0,t_0)}\\
        &=\left.\partial_t u \right|_{(x_0,t_0)}+2\int_{\mathbb{R}^d}(u(x_0,t_0)-u(y,t_0)) \gamma(x_0,y)dy+\alpha\cdot\nabla u|_{(x_0,t_0)}-p(x_0)u(x_0,t_0)\\
        &=-2\int_{\mathbb{R}^d}u(y,t_0)\gamma(x_0,y)dy\leq 0
    \end{align*}
    by \eqref{KernelAssump1}. Therefore, it must be that $u\equiv 0$ in $\Omega\times\{t_0\}$, and the conclusion is established.

\end{proof}

Next, similar to the nonlocal space-fractional case, the corresponding maximum principle for the multi-term time-fractional case is also known, by combining Theorems 2 and 3 of \cite{Luchko2011MultiTimeFracMaxPrinciple}:

\begin{lemma}[Strong Maximum Principle]\label{StrongMaxPrincipleTime}
    Assume $u\in C^{2,1}_{0}(\Omega\times(0,T])\cap C(\bar{\Omega}\times[0,T])$ and $p\leq0$, such that 
    \begin{equation}
\sum_{j=1}^{B}b_j({_{0}D}_t^{\beta_j}u)-d(x)\Delta u + \alpha(x)\cdot\nabla u -p(x,t)u \geq 0, \quad \text{in } \Omega\times(0,T],
\end{equation}
and $u\geq0$ in $\partial\Omega\times(0,T]$. Then $u\geq0$ in $\Omega\times(0,T)$, unless $u$ vanishes identically.
\end{lemma}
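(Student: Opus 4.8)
The plan is to argue by contradiction and to reduce the assertion to the extremum principle for the multi-term Caputo derivative (Theorem~2 of \cite{Luchko2011MultiTimeFracMaxPrinciple}), the strong maximum principle of Theorem~3 being invoked only for the strict part; the convection term and the sign condition $p\le 0$ will be handled pointwise at the extremal point, exactly in the spirit of the proof of Lemma~\ref{WMP}. Suppose $u$ does not vanish identically and, contrary to the claim, that $u(x_*,t_*)<0$ for some $(x_*,t_*)\in\Omega\times(0,T]$. Since $u\in C(\bar\Omega\times[0,T])$ and $\bar\Omega\times[0,T]$ is compact, $u$ attains a global minimum $m:=\min_{\bar\Omega\times[0,T]}u\le u(x_*,t_*)<0$ at some point $(x_0,t_0)$. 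Because $u(\cdot,0)=0$ (encoded in the hypothesis $u\in C^{2,1}_0$, consistent with the initial condition in \eqref{TimeMainPb0}) and $u\ge 0$ on $\partial\Omega\times(0,T]$, this minimiser must satisfy $x_0\in\Omega$ and $t_0\in(0,T]$; that is, the minimum is attained in the interior of the space-time cylinder.

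At $(x_0,t_0)$ the map $x\mapsto u(x,t_0)$ has an interior minimum, so $\nabla u(x_0,t_0)=0$ and $\Delta u(x_0,t_0)\ge 0$; by ellipticity ($d>0$) this yields $\alpha(x_0)\cdot\nabla u(x_0,t_0)=0$ and $-d(x_0)\Delta u(x_0,t_0)\le 0$. Moreover $t\mapsto u(x_0,t)$ attains its minimum over $[0,t_0]$ at $t=t_0$, so the extremum principle for the Caputo derivative gives, for each $j$,
\[
{_{0}D}_t^{\beta_j}u(x_0,t_0)\ \le\ \frac{t_0^{-\beta_j}}{\Gamma(1-\beta_j)}\bigl(u(x_0,t_0)-u(x_0,0)\bigr)\ =\ \frac{t_0^{-\beta_j}}{\Gamma(1-\beta_j)}\,m\ <\ 0,
\]
and since every $b_j>0$, the sum $\sum_{j=1}^{B}b_j\,{_{0}D}_t^{\beta_j}u(x_0,t_0)$ is strictly negative. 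Finally, $p(x_0,t_0)\le 0$ and $u(x_0,t_0)=m<0$ force $-p(x_0,t_0)u(x_0,t_0)\le 0$. Substituting these four estimates into the differential inequality evaluated at $(x_0,t_0)$ makes its right-hand side a sum of non-positive quantities, at least one of which is strictly negative, hence strictly negative overall --- contradicting the assumed inequality $\ge 0$. Therefore $u\ge 0$ in $\Omega\times(0,T]$. The strict positivity that makes the exceptional alternative ``$u$ vanishes identically'' meaningful then follows from Theorem~3 of \cite{Luchko2011MultiTimeFracMaxPrinciple} applied to the now-nonnegative function $u$, observing that at any candidate interior zero of $u$ one has $\nabla u=0$ and $-pu\ge 0$ (as $u\ge 0$, $p\le 0$), so the lower-order terms only reinforce the inequality --- precisely the mechanism used in Lemma~\ref{StrongMaxPrinciple}.

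The only genuinely delicate point is checking that the extremum principle is applicable with the regularity at hand: one needs $t\mapsto u(x_0,t)$ to lie in the function class for which the pointwise Caputo estimate above is valid (this is where the subscript $0$ in $C^{2,1}_0(\Omega\times(0,T])$ enters, controlling the behaviour near $t=0$), and one needs the global minimiser to be genuinely interior rather than on the lateral boundary or the initial slice --- both are secured here by the continuity of $u$ together with $u(\cdot,0)=0$ and $u|_{\partial\Omega\times(0,T]}\ge 0$. Beyond this, the proof is a pointwise sign count and requires no further machinery.
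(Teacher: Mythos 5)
Your argument is correct and is essentially the route the paper intends: the paper gives no written proof of this lemma, simply asserting it follows by combining Theorems 2 and 3 of the cited Luchko reference, and your reconstruction — a global-minimum contradiction using the Caputo extremum principle for the multi-term derivative, a pointwise sign count for the elliptic, convective and potential terms as in Lemma \ref{WMP}, and Luchko's Theorem 3 for the strict-positivity alternative — is exactly that combination, carried out in detail. Your observation that the strict negativity of $\sum_j b_j\,{_0D}_t^{\beta_j}u(x_0,t_0)$ hinges on $u(x_0,0)=0$ (i.e.\ on reading the zero initial condition into the hypothesis $u\in C^{2,1}_0$) is a fair and necessary reading, since the lemma as stated omits an explicit initial-slice condition without which the strict inequality would degrade to a non-strict one.
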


\subsection{Main Results}\label{subsect:MainResDef}
With these properties, we can now state our main results. We first define our measurement maps and admissible sets.

Consider the initial-exterior value problem
\begin{equation}\label{DNmapProb}
	\begin{cases}
		\partial_t u -\mathcal{L}u+\alpha\cdot\nabla u=pu+q&  \text{ in }\Omega\times(0,T],\\
		u=0 &\text{ in }\Omega^c\times(0,T],\\
		u(x,0)=0 & \text{ in }\Omega.
	\end{cases}
\end{equation}

We introduce the source-to-boundary map $\Lambda^1_{p}$ related to \eqref{DNmapProb} given by
	\begin{equation}\label{DNMapSpace}
        \Lambda^1_{p}(q)=\left.(-\mathcal{L}+\alpha\cdot\nabla)u\right|_{\Omega_a\times(0,T)}\in L^2(0,T;H^{-s}(\Omega^c)),
	\end{equation}
where $u:\mathbb{R}^d\times (0,T)\to\mathbb{R}$ is the solution to \eqref{DNmapProb}, 
defined in the distributional sense, i.e. $\langle\Lambda^1_{p}(q),h\rangle$ for given Dirichlet data $h$ defined in the admissible region $\Omega_a\subset\Omega^c$.
Therefore, it can be seen that $h$ characterises the basic properties of measure instrument.

The coupling in the systems \eqref{SpaceFracMainPb} and \eqref{TimeMainPb0} are given in the nonlinear interaction functions $\mathbf{F}(x,t,\mathbf{u})$. This coupling is described by the following admissible class:
\begin{definition}[Admissible class $\mathcal{A}$]\label{adm1}
For $\mathbf{F}(x,t,\mathbf{u})=(F_1(x,t,\mathbf{u}),F_2(x,t,\mathbf{u}),\dots,F_M(x,t,\mathbf{u}))$, we define $F_i(x,t,\mathbf{u})$ to be an element of the admissible set $\mathcal{A}$ if it can be inferred that $F_i(x,t,\mathbf{u})$ can be expressed as a power series expansion in the following form:
\[
F_i(x,t,\mathbf{u})=\sum_{\substack{k_1+\cdots+k_M\geq2\\k_i\geq1}}^{\infty}F^{(k_1k_2\cdots k_M)}_iu_1^{k_1}u_2^{k_2}\cdots u_M^{k_M},
\]
where $F^{(k_1k_2\cdots k_M)}_i$ is a constant.
\end{definition}

Then, our main result for the coupled nonlinear nonlocal parabolic system is as follows:

\begin{theorem}\label{SpaceFracThm}
Let $\mathbf{F}\in\mathcal{A}$ and $\mathbf{q}^n=(q_1^n,\dots,q_M^n)$ be such that $\{q_i^n\}_{n=1}^{\infty}$ is a complete set in $L^2(\Omega)$ for each $i=1,\dots,M$. Take $\mathbf{h}\in [C_c^{2,1}(\Omega_a\times(0,T))]^M$ to be given nonzero nonnegative functions. Assume that $p_i(x)\in C(\overline\Omega)$, $p_i\leq 0$ on $\Omega$ for every $i$. 
Let $\mathbf{u}^{n}(x,t;\mathbf{p}^1,\mathbf{F}^1)$, $\mathbf{u}^{n}(x,t;\mathbf{p}^2,\mathbf{F}^2)$  be the bounded classical solutions of problem \eqref{SpaceFracMainPb} corresponding to the potentials $\mathbf{p}^k$ and interaction functions $\mathbf{F}^k$ ($k=1,2$) respectively. If 
\begin{equation}\label{condtionSpace}
    \langle\Lambda^1_{\mathbf{p}^1,\mathbf{F}^1},\mathbf{h}\rangle=\langle\Lambda^1_{\mathbf{p}^2,\mathbf{F}^2},\mathbf{h}\rangle
\end{equation}
then 
\[\mathbf{p}^1=\mathbf{p}^2\quad\text{ in }\Omega\quad \text{ and }\quad \mathbf{F}^1=\mathbf{F}^2.\]
\end{theorem}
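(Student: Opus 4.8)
The plan is to combine the high-order linearisation scheme (successive differentiation of the solution map in the strength of the source) with the maximum principles of Lemmas \ref{WMP}--\ref{StrongMaxPrinciple} and the completeness of $\{q_i^n\}$, reducing the problem to a sequence of linear nonlocal parabolic inverse problems that can be handled one coupling-order at a time. First I would introduce a multi-parameter family of sources: replace $\mathbf{q}$ by $\sum_{\ell} \varepsilon_\ell \mathbf{q}^{(\ell)}$ (with the $j$-th component of $\mathbf{q}^{(\ell)}$ supported so as to activate only the $j$-th equation), write $\mathbf{u}(x,t;\bm\varepsilon)$ for the corresponding solution, and note that by the well-posedness and regularity of \eqref{SpaceFracMainPb} the map $\bm\varepsilon\mapsto\mathbf{u}$ is smooth near $\bm\varepsilon=\mathbf{0}$ with $\mathbf{u}(\cdot;\mathbf{0})=\mathbf{0}$ (using the zero initial and exterior data). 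Since $\mathbf{F}\in\mathcal{A}$ contains only quadratic-and-higher terms, the first-order derivatives $\mathbf{v}^{(\ell)}:=\partial_{\varepsilon_\ell}\mathbf{u}|_{\bm\varepsilon=0}$ satisfy the \emph{linear} decoupled system $\partial_t v^{(\ell)}_i-\mathcal{L}v^{(\ell)}_i+\alpha_i\cdot\nabla v^{(\ell)}_i = p_i v^{(\ell)}_i + q^{(\ell)}_i$, with the nonlinearity contributing nothing at first order.

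The first main step is recovery of $\mathbf{p}$. Applying $\partial_{\varepsilon_\ell}$ to the measurement identity \eqref{condtionSpace} gives $\langle(-\mathcal{L}+\alpha_i\cdot\nabla)(v^{(\ell),1}_i - v^{(\ell),2}_i), h_i\rangle_{\Omega_a\times(0,T)}=0$ for every admissible source and every $i$. The difference $w_i := v^{(\ell),1}_i - v^{(\ell),2}_i$ solves $\partial_t w_i - \mathcal{L}w_i + \alpha_i\cdot\nabla w_i - p^1_i w_i = (p^1_i - p^2_i)v^{(\ell),2}_i$ with zero initial/exterior data, and its nonlocal flux is orthogonal to $h_i$ on $\Omega_a$. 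Here I would invoke the integration-by-parts/duality identity for the nonlocal operator (the nonlocal Green's identity \eqref{Green} together with the adjoint of \eqref{DNmapProb}): pairing $w_i$ against the solution $\phi_i$ of the \emph{adjoint} problem with exterior source $h_i$ on $\Omega_a$ converts the vanishing boundary data into the interior orthogonality relation $\int_0^T\int_\Omega (p^1_i-p^2_i)\, v^{(\ell),2}_i\, \phi_i \,dx\,dt = 0$. Letting the first-order source $q^{(\ell)}_i$ run over the complete set $\{q_i^n\}$ in $L^2(\Omega)$ — which, via the linear flow, generates a rich enough family of products $v^{(\ell),2}_i\phi_i$ (this is exactly the density/Runge-approximation input borrowed from \cite{DingZheng2021IPSpaceFrac}) — forces $p^1_i = p^2_i$ in $\Omega$ for each $i$. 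With $\mathbf{p}^1=\mathbf{p}^2=:\mathbf{p}$ established, the two forward problems now share the same linear part, so $\mathbf{v}^{(\ell),1}=\mathbf{v}^{(\ell),2}$.

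The second main step is the inductive recovery of the Taylor coefficients $F_i^{(k_1\cdots k_M)}$ by increasing order $N=k_1+\cdots+k_M\geq 2$. Differentiating $\mathbf{u}(\cdot;\bm\varepsilon)$ to total order $N$ and evaluating at $\bm\varepsilon=0$, the resulting function $\mathbf{U}_N$ satisfies a linear nonlocal parabolic system whose source is a universal polynomial in the already-determined lower-order derivatives $\{\mathbf{v}^{(\ell)}\},\dots,\{\mathbf{U}_{N-1}\}$ together with the order-$N$ coefficients of $\mathbf{F}$; by the induction hypothesis every lower-order quantity agrees for $k=1,2$, so the difference $\mathbf{W}_N := \mathbf{U}_N^1-\mathbf{U}_N^2$ solves the \emph{same} linear system as in Step 1 but with interior source given precisely by $\big(F_i^{(k_1\cdots k_M),1}-F_i^{(k_1\cdots k_M),2}\big)$ times a known monomial in the lower-order solutions, and again with vanishing nonlocal flux against $\mathbf{h}$. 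The same adjoint-pairing argument then yields, for each multi-index, $\int_0^T\int_\Omega \big(F_i^{(k_1\cdots k_M),1}-F_i^{(k_1\cdots k_M),2}\big)\,(\text{known product})\,\phi_i\,dx\,dt=0$; varying the $M$ independent source strengths and exploiting that distinct multi-indices produce linearly independent monomials in the $\mathbf{v}^{(\ell)}$ (a Wronskian/linear-independence argument over the polynomial ring, plus the completeness of $\{q_i^n\}$ to make the products dense) isolates each coefficient and gives $F_i^{(k_1\cdots k_M),1}=F_i^{(k_1\cdots k_M),2}$. Summing the identified power series yields $\mathbf{F}^1=\mathbf{F}^2$.

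I expect the main obstacle to be the density/linear-independence argument at the heart of both steps: one must show that the family of products $v^{(\ell),2}_i\,\phi_i$ (and, at higher order, products of several such solutions with the adjoint solution) is rich enough in $L^1(\Omega\times(0,T))$ — or at least total against continuous potentials — to conclude vanishing of the coefficient functions, despite the severe loss of information caused by testing the flux only against the fixed weights $\mathbf{h}$ on the small exterior set $\Omega_a$ rather than measuring the full Neumann trace. This is where the nonlocal Runge approximation property for $\mathcal{L}+\alpha\cdot\nabla-\mathbf{p}$ and the unique continuation from $\Omega_a$ for the adjoint equation (as developed for the single-equation case in \cite{DingZheng2021IPSpaceFrac}) must be adapted to the coupled, convection-perturbed setting; the nonnegativity constraint $\mathbf{u}\geq\mathbf{0}$ and the sign condition $p_i\leq0$ enter here to guarantee well-posedness of the forward and adjoint problems and the validity of the maximum principles used to control the linearised solutions. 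The decoupling of the first-order linearisation (thanks to $\mathbf{F}$ having no linear part) is what makes the induction get off the ground, so verifying that $\mathcal{A}$'s structure is used correctly at each order is the remaining bookkeeping point.
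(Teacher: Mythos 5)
Your overall architecture (first-order linearisation to get $\mathbf{p}$, higher-order linearisation plus adjoint pairing and the maximum principle to get the Taylor coefficients of $\mathbf{F}$) matches the paper's, but the pivotal step where you recover $\mathbf{p}$ contains a genuine gap, and it is precisely the one you flag yourself as "the main obstacle." You form the difference $w_i=v^{(\ell),1}_i-v^{(\ell),2}_i$ of the two linearised forward solutions, pair it with a single adjoint solution $\phi_i$, and arrive at $\int_0^T\int_\Omega (p^1_i-p^2_i)\,v^{(\ell),2}_i\,\phi_i\,dx\,dt=0$. To conclude $p^1_i=p^2_i$ from this you need the family of products $\{v^{(\ell),2}_i\phi_i\}$ (with $\phi_i$ fixed by the fixed weight $h_i$) to be total against continuous functions, i.e.\ a Runge-approximation property for the solution operator $q\mapsto v$ of the nonlocal convection--diffusion equation. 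Completeness of $\{q_i^n\}$ in $L^2(\Omega)$ does not give this: the solution operator is smoothing and its range is not known to be dense in the required sense, and the paper explicitly notes (Remark \ref{rmk:SpaceExtNonConstant}) that unique continuation/Runge properties are \emph{not} available for the measurement map $\Lambda^1$ in this mixed local--nonlocal setting. So as written, the step "forces $p^1_i=p^2_i$" does not follow.

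The paper sidesteps this entirely by a different duality bookkeeping. It takes sources in separated form $g_1=\varphi^n(x)V(t)$ and pairs \emph{each} forward solution $u_k$ with \emph{its own} adjoint solution $w(\cdot;p^k)$ solving \eqref{AdjointPb}; the forward equation then collapses the interior integral to $\int_0^T\int_\Omega \varphi^n V\,w(x,t;p^k)\,dx\,dt=$ (measurement). Equality of the measurements \eqref{condtionSpace} plus completeness of $\{\varphi^n\}$ in $L^2(\Omega)$ therefore yields the pointwise-in-$x$ identity $\int_0^T V(t)w(x,t;p^1)\,dt=\int_0^T V(t)w(x,t;p^2)\,dt$ --- no density of products of solutions is ever needed, because the completeness is applied to the source factor, not to the range of the solution map. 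Choosing $V=v$ and $V=\partial_t v$ and applying $-\mathcal{L}+\alpha\cdot\nabla$ to the resulting identities (equations \eqref{Veq1}--\eqref{Veq3}), then subtracting the two adjoint PDEs integrated against $v$, gives $(p^1-p^2)\int_0^T w\,v\,dt=0$ pointwise, and the strong maximum principle (Lemma \ref{StrongMaxPrinciple}) supplies the positivity of $\int_0^T wv\,dt$ needed to divide out. I recommend you restructure your Step 1 along these lines; note also that the separated space--time structure of the source and the two choices of $V$ are essential and are absent from your proposal. For Step 2 your scheme coincides with the paper's identity \eqref{SpaceEq}, but be aware that the positivity of the integrals only isolates one Taylor coefficient at a time (the paper assumes all but one coefficient of a given order are known); your claimed "Wronskian/linear-independence over the polynomial ring" argument for recovering all coefficients of a given order simultaneously is not substantiated and goes beyond what the maximum-principle sign information delivers.
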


Here, we use the notation $\mathbf{u}^{n}(x,t;\mathbf{p},\mathbf{F})$ for the solution to emphasise its dependence on the functions $\mathbf{p}$ and $\mathbf{F}$ and the inject sources $\mathbf{q}$.

\begin{remark}
    This result is weaker than that obtained when the operator $\mathcal{L}$ is replaced with the classical Laplacian (c.f. Theorem 4.1 of \cite{ding2023determining}). This is because the strong maximum principle (Lemma \ref{StrongMaxPrinciple}) only holds on $t_0\in(0,T]$ for the nonlocal case, so $p_i$ cannot depend on $t$. This is an effect of nonlocality.
\end{remark}

In general, no good Neumann boundary condition definition is known for mixed local-nonlocal operators involving a fractional or nonlocal second-order derivative and a first order local gradient operator. The only known Neumann boundary conditions for mixed operators involve two second-order operators, as in \cite{DipierroLippiValdinoci2022-NeumannConditionMultipleFracLap,DipierroLippiSportelliValdinoci2024-NeumannConditionMultipleFracLap,DipierroLippiSportelliValdinoci2024-NeumannConditionMultipleFracLap2}. Yet, in the case where the operator is completely nonlocal, i.e. when $\bm{\alpha}\equiv\mathbf{0}$, it is known that 
\[\langle\Lambda^1_{p_i}(q),h\rangle|_{\Omega_a}=\int_{\Omega_a\times(0,T)}\mathcal{N}(\bm{\Theta}\cdot \mathcal{D}^*(u^{m,n}_i(x,t;p_i)))h(x,t)\,dxdt.\] In this situation, we have the corresponding result:

\begin{corollary} 
Let $\mathbf{F}\in\mathcal{A}$ and $\mathbf{q}^n=(q_1^n,\dots,q_M^n)$ be such that $\{q_i^n\}_{n=1}^{\infty}$ is a complete set in $L^2(\Omega)$ for each $i=1,\dots,M$. Take $\mathbf{h}\in [C_c^{2,1}(\Omega_a\times(0,T))]^M$ to be given nonzero nonnegative functions. Assume that $p_i(x)\in C(\overline\Omega)$, $p_i\leq 0$ on $\Omega$ for every $i$. 
Let $\mathbf{u}^{n}(x,t;\mathbf{p}^1,\mathbf{F}^1)$, $\mathbf{u}^{n}(x,t;\mathbf{p}^2,\mathbf{F}^2)$  be the bounded classical solutions of problem \eqref{SpaceFracMainPb} corresponding to the potentials $\mathbf{p}^k$ and interaction functions $\mathbf{F}^k$ ($k=1,2$) respectively. If 
\begin{equation}\label{condtionSpaceNonmixed}
    \int_{\Omega_a\times(0,T)}\mathcal{N}(\bm{\Theta}\cdot \mathcal{D}^*(u^{n}_i(x,t;p_i^1)))h(x,t)\,dxdt=\int_{\Omega_a\times(0,T)}\mathcal{N}(\bm{\Theta}\cdot \mathcal{D}^*(u^{n}_i(x,t;p_i^2)))h(x,t)\,dxdt
\end{equation}
then 
\[\mathbf{p}^1=\mathbf{p}^2\quad\text{ in }\Omega \quad \text{ and }\quad \mathbf{F}^1=\mathbf{F}^2.\]
\end{corollary}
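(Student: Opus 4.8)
The plan is to reduce the full nonlinear, coupled identification problem to a sequence of linear inverse problems via high-order linearisation, and then to peel off the unknowns one order at a time. First I would introduce a multi-parameter family of sources: for a fixed collection of nonnegative sources $\mathbf{q}^{n}$ whose components are complete in $L^2(\Omega)$, consider the input $\mathbf{q}=\sum_{\ell} \varepsilon_{\ell}\,\mathbf{q}^{(\ell)}$ with small parameters $\varepsilon_{\ell}\ge 0$, and let $\mathbf{u}(x,t;\varepsilon)$ denote the corresponding (nonnegative, by the weak maximum principle Lemma \ref{WMP}) solution. Since $\mathbf{u}(x,t;0)=0$ and $\mathbf{F}\in\mathcal{A}$ vanishes to second order at $\mathbf{u}=\mathbf{0}$, differentiating the system \eqref{SpaceFracMainPb} in $\varepsilon$ and evaluating at $\varepsilon=0$ gives, at first order, the \emph{linear} problem $\partial_t \mathbf{v}-\mathcal{L}\mathbf{v}+\bm{\alpha}\cdot\nabla\mathbf{v}=\mathbf{p}(x)\mathbf{v}+\mathbf{q}^{(\ell)}$ with zero exterior and initial data, decoupled across components because the coupling lives only in $\mathbf{F}$. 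The measurement identity \eqref{condtionSpaceNonmixed} then forces, for each $i$ and each $n$, that $\int_{\Omega_a\times(0,T)}\mathcal{N}(\bm{\Theta}\cdot\mathcal{D}^{*}(w^{n}_i))\,h\,dx\,dt=0$ where $w^{n}_i$ solves the linear problem with potential $p_i^{1}-p_i^{2}$-type difference; here I would invoke the single-equation nonlocal argument of \cite{DingZheng2021IPSpaceFrac}, adapted via the nonlocal Green identity \eqref{Green} and a suitable adjoint (test) problem solved backward in time with exterior datum built from $h$, to conclude $p_i^{1}=p_i^{2}$ in $\Omega$. The completeness of $\{q_i^n\}$ in $L^2(\Omega)$ is exactly what is needed to make the resulting orthogonality relation span enough test states.

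Once $\mathbf{p}^{1}=\mathbf{p}^{2}=:\mathbf{p}$ is known, the forward operators at first order coincide, so $\mathbf{u}^{1}$ and $\mathbf{u}^{2}$ agree to first order in $\varepsilon$, and one proceeds inductively on the total degree $K=k_1+\cdots+k_M$. Assuming all Taylor coefficients $F_i^{(k_1\cdots k_M),1}=F_i^{(k_1\cdots k_M),2}$ for $k_1+\cdots+k_M\le K-1$ have been matched, I would take the mixed derivative $\partial_{\varepsilon_1}^{k_1}\cdots\partial_{\varepsilon_M}^{k_M}|_{\varepsilon=0}$ of the two systems for a multi-index with $|k|=K$. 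The induction hypothesis guarantees that all lower-order Taylor data and all lower-order solution derivatives coincide, so the difference of the $K$-th order solution derivatives $\mathbf{R}$ solves the \emph{same} linear nonlocal system with potential $\mathbf{p}$ but with a source term of the form $\big(F_i^{(k_1\cdots k_M),1}-F_i^{(k_1\cdots k_M),2}\big)\cdot(\text{product of lower-order solution derivatives})$ — a known, explicitly computable function times the unknown scalar jump in the Taylor coefficient. The vanishing of the measurement at order $K$ then yields $\int_{\Omega_a\times(0,T)} \mathcal{N}(\bm{\Theta}\cdot\mathcal{D}^{*} R_i)\,h\,dx\,dt=0$; pairing against the backward adjoint solution and using that the lower-order products are not identically zero (they are built from the strictly positive first-order solutions via the strong maximum principle, Lemma \ref{StrongMaxPrinciple}, evaluated at a suitable $t_0$) forces $F_i^{(k_1\cdots k_M),1}=F_i^{(k_1\cdots k_M),2}$. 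Summing the recovered coefficients gives $\mathbf{F}^{1}=\mathbf{F}^{2}$.

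The step I expect to be the main obstacle is the linear-level argument extracting the potential jump and, at higher orders, the Taylor-coefficient jump from the \emph{averaged} exterior flux data $\int_{\Omega_a\times(0,T)}\mathcal{N}(\bm{\Theta}\cdot\mathcal{D}^{*}(\cdot))\,h$ rather than from the full flux. One cannot simply read off the Neumann trace; instead one must construct an adjoint/test function $\phi_i$ solving the backward-in-time nonlocal equation $-\partial_t\phi_i-\mathcal{L}\phi_i-\mathrm{div}(\bm{\alpha}\phi_i)=p_i\phi_i$ in $\Omega\times(0,T)$ with terminal condition zero and exterior condition essentially $h$ on $\Omega_a$, then integrate the difference equation against $\phi_i$ and use the nonlocal Green identity \eqref{Green} to convert the boundary/exterior term into the measured quantity and the interior term into $\int (p_i^1-p_i^2)\,u_i\,\phi_i$. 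Showing this pairing is nondegenerate — i.e. that as $q_i^n$ ranges over a complete set the products $u_i^n\phi_i$ are rich enough in $L^2(\Omega\times(0,T))$ (respectively, that $u_i\phi_i$ does not vanish on an open set) to force the coefficient difference to zero — is the delicate point, and is where the completeness hypothesis on $\{q_i^n\}$, the negativity of $p_i$, and positivity from the strong maximum principle all enter. Everything else (differentiating the series termwise, decoupling at each order, bookkeeping the induction) is routine once this linear uniqueness lemma is in hand, and the passage from Theorem \ref{SpaceFracThm} to the Corollary is immediate since in the purely nonlocal case $\bm{\alpha}\equiv\mathbf{0}$ the flux map \eqref{DNMapSpace} reduces exactly to $\int_{\Omega_a\times(0,T)}\mathcal{N}(\bm{\Theta}\cdot\mathcal{D}^{*}u)\,h\,dx\,dt$, so hypothesis \eqref{condtionSpaceNonmixed} is a restatement of \eqref{condtionSpace} and the conclusion follows from Theorem \ref{SpaceFracThm} verbatim.
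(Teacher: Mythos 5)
Your reduction of the Corollary to Theorem \ref{SpaceFracThm} is exactly what the paper does: when $\bm{\alpha}\equiv\mathbf{0}$ the map \eqref{DNMapSpace} coincides with the nonlocal Neumann pairing $\int_{\Omega_a\times(0,T)}\mathcal{N}(\bm{\Theta}\cdot\mathcal{D}^{*}u)\,h\,dx\,dt$, so \eqref{condtionSpaceNonmixed} is \eqref{condtionSpace} and the conclusion is immediate; the paper offers no further argument for the Corollary itself. Your sketch of the underlying theorem also follows the paper's architecture (high-order linearisation, a backward adjoint problem with exterior datum $h$, the nonlocal Green identity, maximum principles for positivity, induction on the Taylor order), but the one step you flag as the main obstacle --- nondegeneracy of the averaged pairing --- is resolved in the paper by a different and cleaner mechanism than the one you propose. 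Rather than trying to show that the products $u_i^n\phi_i$ are dense (which would require a density-preservation property of the solution map that is not at all obvious), the paper takes sources of the separated form $g_1=\varphi^n(x)V(t)$ and observes that testing the forward equation against the adjoint solution $w(\cdot;p^k)$ turns the measurement equality into $\int_\Omega\int_0^T\varphi^n V\,w(\cdot;p^1)=\int_\Omega\int_0^T\varphi^n V\,w(\cdot;p^2)$; completeness of $\{\varphi^n\}$ in $L^2(\Omega)$ then yields the \emph{pointwise-in-$x$} identity $\int_0^T V\,w(\cdot;p^1)\,dt=\int_0^T V\,w(\cdot;p^2)\,dt$ for the adjoint solutions. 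Choosing $V=v$ and $V=\partial_t v$, applying the spatial operator to the first identity, and subtracting the two adjoint equations isolates $(p^1-p^2)\int_0^T w\,v\,dt=0$, and the strong maximum principle applied to $w$ (with $h\geq 0$, $h\not\equiv 0$) gives positivity of the remaining integral on some time interval. So the completeness hypothesis is spent on the source's spatial factor to transfer the data equality onto the adjoint states, not on generating a rich family of forward solutions; this sidesteps the delicate point entirely. The same caveat applies at higher orders, where the paper (like your sketch, implicitly) recovers the Taylor coefficients one at a time under the assumption that all but one are known, since each order of linearisation yields a single scalar identity such as \eqref{SpaceEq} containing several coefficient differences.
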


In the case where the operator $\mathcal{L}$ is local and we consider nonlocality in the time derivative, we have a similar result for the time-fractional diffusion system.

We first introduce the source-to-boundary map $\Lambda^2_{p}$ related to \begin{equation}\label{DNmapProbTime}
	\begin{cases}
		{_{0}D}_t^{\bm{\beta}}\mathbf{u} -\mathbf{d}\Delta \mathbf{u}+\bm{\alpha}\cdot\nabla \mathbf{u}=\mathbf{p}\mathbf{u}+\mathbf{F}(x,t,\mathbf{u})+\mathbf{q}&  \text{ in }\Omega\times(0,T],\\
		\mathbf{u}=\mathbf{0} &\text{ in }\partial\Omega\times(0,T],\\
		\mathbf{u}(0)=\mathbf{0} & \text{ in }\Omega.
	\end{cases}
\end{equation}
given by
	\begin{equation}\label{DNMapTime}
        \Lambda^2_{\mathbf{p},\mathbf{F}}(\mathbf{q})=\left.(-\mathbf{d}\Delta+\bm{\alpha}\cdot\nabla)\mathbf{u}\right|_{\Gamma\times(0,T)},
	\end{equation}
where $u_i:\mathbb{R}^d\times (0,T)\to\mathbb{R}$ is the solution to \eqref{TimeMainPb0}, 
defined in the distributional sense, i.e. $\langle\Lambda^2_{\mathbf{p},\mathbf{F}}(\mathbf{q}),\mathbf{h}\rangle$ for given Dirichlet data $\mathbf{h}=(h_1,\dots,h_M)$ defined on $\Gamma\subset\partial\Omega$.

We also introduce the source-to-interior map $\Lambda_{\bm{\beta}}^3(\mathbf{q})$ associated to \eqref{DNmapProbTime}, given by the observation of $\mathbf{u}$ at $\{x_0\}\times (0,T)$, i.e. 
\begin{equation}
    \Lambda^3_{\bm{\beta}}(\mathbf{q})=\mathbf{u}(x_0,t)|_{t\in(0,T)}.
\end{equation}

Then, our main result is as follows:
\begin{theorem}\label{TimeFracThm}
Let $\mathbf{F}\in\mathcal{A}$ and $\mathbf{q}^n=(q_1^n,\dots,q_M^n)$ be such that $\{q_i^n\}_{n=1}^{\infty}$ is a complete set in $L^2(\Omega\times(0,T))$ for each $i=1,\dots,M$. For $\Gamma\subseteq\partial\Omega$, take $\mathbf{h}\in [C_c(\Gamma\times(0,T))]^M$ to be given nonzero nonnegative functions. Assume that $p_i(x,t)\in C(\overline\Omega\times[0,T])$, $p_i\leq 0$ on $\Omega\times(0,T]$ for every $i$. 
Let $\mathbf{u}^{n}(x,t;\mathbf{p}^1,\mathbf{F}^1,\bm{\beta}^1)$, $\mathbf{u}^{n}(x,t;\mathbf{p}^2,\mathbf{F}^2,\bm{\beta}^2)$  be the bounded classical solutions of problem \eqref{TimeMainPb0} corresponding to the potentials $\mathbf{p}^k$ and interaction functions $\mathbf{F}^k$ ($k=1,2$) respectively. 
\begin{enumerate}[label=(\arabic*)]
    \item If 
\begin{equation}\label{condtionTime}
    \Lambda^2_{\mathbf{p}^1,\mathbf{F}^1}=\Lambda^2_{\mathbf{p}^2,\mathbf{F}^2}
\end{equation}
then 
\begin{equation}\mathbf{p}^1=\mathbf{p}^2\quad\text{ in }\Omega\times(0,T) \quad\text{ and }\quad\mathbf{F}^1=\mathbf{F}^2.\end{equation}
\item If 
\begin{equation}\label{condtionTimeOrder}
    \Lambda^3_{\bm{\beta}^1}=\Lambda^3_{\bm{\beta}^2}
\end{equation}
then 
\begin{equation}\bm{\beta}^1=\bm{\beta}^2.\end{equation}
\item If 
\begin{equation}
    \Lambda^2_{\mathbf{p}^1,\mathbf{F}^1}=\Lambda^2_{\mathbf{p}^2,\mathbf{F}^2}\quad \text{ and }\quad \Lambda^3_{\bm{\beta}^1}=\Lambda^3_{\bm{\beta}^2}
\end{equation}
then 
\begin{equation}\mathbf{p}^1=\mathbf{p}^2\quad\text{ in }\Omega\times(0,T) \quad\text{ and }\quad\mathbf{F}^1=\mathbf{F}^2\quad \text{ and }\quad \bm{\beta}^1=\bm{\beta}^2.\end{equation}

Here, we are able to recover $\mathbf{p}$, $\mathbf{F}$ and $\bm{\beta}$ simultaneously. 
\end{enumerate}
\end{theorem}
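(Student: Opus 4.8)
The plan is to combine a high-order linearisation scheme with the strong maximum principle (Lemma \ref{StrongMaxPrincipleTime}) and the completeness of $\{q_i^n\}$, treating the three assertions in turn. Since the system is coupled only through the interaction functions $\mathbf{F}\in\mathcal{A}$ and each $F_i$ has no linear part (the sum starts at $k_1+\cdots+k_M\ge 2$), the first-order linearisation decouples the equations. Concretely, I would introduce a multi-parameter family of sources $\mathbf{q}=\sum_{\ell}\varepsilon_\ell\mathbf{q}^{(\ell)}$, differentiate the solution $\mathbf{u}(x,t;\bm\varepsilon)$ with respect to the $\varepsilon_\ell$ at $\bm\varepsilon=\mathbf{0}$, and use that $\mathbf{u}(\cdot;\mathbf{0})=\mathbf{0}$ (because the zero initial/boundary data and zero source give the trivial solution, and nonnegativity is consistent with this). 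The first derivative $v_i^{(\ell)}:=\partial_{\varepsilon_\ell}u_i|_{\bm\varepsilon=0}$ solves the linear scalar problem ${_0D}_t^{\bm\beta}v_i-d_i\Delta v_i+\bm\alpha\cdot\nabla v_i=p_i v_i+q_i^{(\ell)}$ with homogeneous data, with no contribution from $\mathbf{F}$.

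For assertion (1), I would first recover $\mathbf{p}$. Writing $w_i=v_i^{(\ell)}(\cdot;\mathbf{p}^1)-v_i^{(\ell)}(\cdot;\mathbf{p}^2)$, the identity \eqref{condtionTime} forces $(-d_i\Delta+\bm\alpha\cdot\nabla)w_i$ to have zero weighted flux $\langle\cdot,h_i\rangle$ on $\Gamma\times(0,T)$; using an integration-by-parts (adjoint) argument against a suitable test solution, together with the fact that $w_i$ satisfies a linear equation with source $(p_i^1-p_i^2)v_i^{(1)}(\cdot;\mathbf{p}^2)$, one derives an integral identity of the form $\int (p_i^1-p_i^2)\,v_i^{(\ell)}(\cdot;\mathbf{p}^2)\,\psi = 0$ for a rich family of test functions $\psi$. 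Here the nonnegativity enters crucially: by the strong maximum principle (Lemma \ref{StrongMaxPrincipleTime}) the solution $v_i^{(\ell)}$ corresponding to a nonnegative source $q_i^{(\ell)}\ge 0$ (complete in $L^2$ among nonnegative-generated combinations, or one first reduces to this case since completeness of $\{q_i^n\}$ in $L^2$ lets one approximate) is strictly positive on $\Omega\times\{t_0\}$ for some $t_0$, hence does not vanish on an open set, so one may divide; combined with completeness of $\{q_i^n\}$ one concludes $p_i^1=p_i^2$ in $\Omega\times(0,T)$. The $t$-dependence is permitted here (unlike the nonlocal parabolic case) precisely because Lemma \ref{StrongMaxPrincipleTime} is a genuine strong maximum principle on all of $(0,T)$. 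Once $\mathbf{p}^1=\mathbf{p}^2$, I recover $\mathbf{F}$ by induction on the total order $N=k_1+\cdots+k_M$: at order $N$, differentiating the system $\partial_{\varepsilon_1}^{k_1}\cdots\partial_{\varepsilon_M}^{k_M}$ at $\bm\varepsilon=\mathbf{0}$ produces a linear equation for the $N$-th mixed derivative whose source contains the single new unknown coefficient $F_i^{(k_1\cdots k_M),1}-F_i^{(k_1\cdots k_M),2}$ multiplied by a product $\prod_j (v_j)^{k_j}$ of already-identified first-order solutions, plus lower-order terms that vanish by the inductive hypothesis; the same flux-identity-plus-completeness argument then forces the coefficient difference to be zero. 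One must check the $N=2$ base case separately, and check that the products $\prod_j (v_j^{(\ell_j)})^{k_j}$ can be made to not vanish identically and to span enough — again using positivity and varying the sources.

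For assertion (2), the recovery of $\bm\beta$ from $\Lambda^3$ is essentially decoupled from $\mathbf{p},\mathbf{F}$: applying the \emph{same} first-order linearisation, $v_i^{(\ell)}(x_0,t)$ is observed for all $t\in(0,T)$, and $v_i^{(\ell)}$ solves a linear multi-term time-fractional equation with known $d_i,\bm\alpha,p_i$ (or even before knowing $p_i$, by a further linearisation in which the source is localised so that the $p_iv_i$ term is negligible to leading order near $t=0$). The standard tool is the Laplace transform in $t$: $\widehat{v_i}(x,z)$ satisfies $\big(\sum_j b_{j,i}z^{\beta_{j,i}}\big)\widehat v_i - d_i\Delta\widehat v_i+\bm\alpha\cdot\nabla\widehat v_i=p_i\widehat v_i+\widehat q_i^{(\ell)}$, and the known data $\widehat{v_i}(x_0,z)$, via an eigenfunction expansion of $-d_i\Delta+\bm\alpha\cdot\nabla-p_i$, has poles/branch behaviour in $z$ determined by the symbol $\sum_j b_{j,i}z^{\beta_{j,i}}$; matching asymptotics as $z\to\infty$ (or $z\to 0$) along the real axis identifies first the largest exponent $\beta_{B_i,i}$ and its coefficient, then successively the remaining $\beta_{j,i}$ — this is the classical argument of Li--Yamamoto-type. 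One needs the source $q_i^{(\ell)}$ to have nonzero projection onto at least one eigenfunction that is nonzero at $x_0$, which is arranged by completeness of $\{q_i^n\}$. Assertion (3) is then immediate: recover $\bm\beta$ by (2), and with $\bm\beta$ known the operator in assertion (1) is fully determined, so (1) applies verbatim to give $\mathbf{p},\mathbf{F}$.

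The main obstacle I anticipate is the rigorous justification of the linearisation itself in this setting: one must show that the forward problem \eqref{TimeMainPb0} (with the nonnegativity constraint $\mathbf{u}\ge\mathbf{0}$) is well-posed and depends smoothly (analytically) on the small parameters $\bm\varepsilon$ in a neighbourhood of $\bm\varepsilon=\mathbf{0}$, uniformly enough that the formal Taylor coefficients in $\bm\varepsilon$ are genuine derivatives solving the claimed linear problems — and that the constraint $\mathbf{u}\ge\mathbf{0}$ is not active (i.e. for small nonnegative sources the unconstrained solution is already nonnegative, which is where Lemma \ref{StrongMaxPrincipleTime} and negativity of $\mathbf{p}$ are used). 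A secondary difficulty is extracting pointwise information about $\mathbf{p}(x,t)$ and the coefficients $F_i^{(k_1\cdots k_M)}$ from the \emph{averaged} flux data $\langle\cdot,\mathbf{h}\rangle$ on the partial boundary $\Gamma$: this requires choosing, for each target point and each linearisation order, an adjoint test function concentrated appropriately and exploiting the density of the resulting family — the averaging weight $\mathbf{h}$ being an arbitrary fixed nonnegative bump means one cannot vary $\mathbf{h}$, so all the freedom must come from varying $\mathbf{q}$, and one must verify that this suffices. I expect to handle this exactly as in the single-equation nonlocal case \cite{DingZheng2021IPSpaceFrac} and the coupled local case \cite{DingLiuZheng2023JMBInversePbBio}, adapting their Runge-type approximation / completeness arguments to the multi-term time-fractional operator.
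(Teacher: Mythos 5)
Your overall architecture coincides with the paper's: successive linearisation in the source parameters, pairing with an adjoint solution generated by the fixed weight $\mathbf{h}$, completeness of the injected sources, positivity via Lemma \ref{StrongMaxPrincipleTime}, and a Laplace-transform asymptotic analysis for $\bm{\beta}$ (the paper additionally normalises away $\mathbf{d}$ and $\bm{\alpha}$ by an explicit change of variables first, which is cosmetic). The one place where your mechanism genuinely differs, and where your write-up has a soft spot, is the recovery of $\mathbf{p}$. You subtract the two forward solutions and arrive at $\int(p^1-p^2)\,v^{(\ell)}\,\psi=0$, then appeal to ``a rich family of test functions $\psi$''; but no such family is available, since $\mathbf{h}$ is fixed and hence $\psi$ is a single adjoint solution $w$, and with $v^{(\ell)},w>0$ the vanishing of one integral does not localise $p^1-p^2$ (``one may divide'' does not apply to an integrated identity). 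To close your route you would need a density-of-range statement: the span of $\{v^{(\ell)}\}_\ell$ is dense in $L^2(\Omega\times(0,T))$ when $\{q^\ell\}$ is complete. This is provable by duality against the adjoint problem, but it is an extra lemma you must state and prove. The paper avoids it entirely with a neater duality identity: pairing the forward equation with the adjoint solution $w(\cdot\,;p^k)$ of \eqref{AdjointPbTime} shows the measured datum equals $-\iint \phi^n\,w(\cdot\,;p^k)\,dx\,dt$, so equality of the data over the complete set $\{\phi^n\}$ gives $w(\cdot\,;p^1)=w(\cdot\,;p^2)$ pointwise; subtracting the two adjoint PDEs then yields $(p^1-p^2)w=0$, and $w>0$ finishes.

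Two smaller divergences. For $\bm{\beta}$, the paper does not match eigenfunction asymptotics as $z\to\infty$; it argues by contradiction at $s\to0$, dividing $\hat{\mathbf{u}}_1-\hat{\mathbf{u}}_2$ by $s^{\hat{\beta}^1}-s^{\hat{\beta}^2}$ at the smallest discrepant exponent and using strict positivity of $\hat{\mathbf{u}}_2$ (maximum principle plus the positive source $e^{-at}f$) to show the limit is strictly negative at $x_0$ — this replaces your hypothesis that the source have a nonzero projection onto an eigenfunction not vanishing at $x_0$, which is exactly the kind of condition the positivity device is designed to dispense with. For $\mathbf{F}$, note that the second-order identity \eqref{SpaceEq} is a single linear relation among several coefficient differences with positive weights; the paper concludes $\hat{F}_1=\hat{F}_2$ only under the assumption that all but one Taylor coefficient at each order is already known, so your claim that each mixed derivative isolates ``the single new unknown coefficient'' overstates what the identity yields — you inherit the same limitation, and should state it.
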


\section{High Order Linearisation}\label{sect:linearzation}

We will prove the results by considering the different orders of expansion of $F_i\in\mathcal{A}$, similar to the method of high-order linearisation. This is the focus of this section. We will show the process for the coupled nonlocal nonlinear parabolic system, and the time-fractional coupled nonlinear diffusion system follows similarly.

Assume that the solutions of \eqref{SpaceFracMainPb} and \eqref{TimeMainPb0} are bounded classical solutions. 
Consider source inputs of the form 
\begin{equation}\label{m0}
\textbf{q}(x,t;\epsilon)=\sum_{l=1}^K\epsilon_l\textbf{g}_l(x,t),\quad (x,t)\in \Omega\times(0,T],
\end{equation}
where $\epsilon=(\epsilon_1,\epsilon_2,\dots,\epsilon_K)\in \mathbb{R}^K_+$ with $|\epsilon|=\sum_{l=1}^K |\epsilon_l|$ sufficiently small. 

Given \eqref{m0}, we can observe that $\bm{u}=\bm{0}$ is a solution to both \eqref{SpaceFracMainPb} and \eqref{TimeMainPb0} when $\epsilon=0$. Therefore, we define $\bm{u}(x,t;0)=\bm{0}$ to be the solution of the systems when $\epsilon=0$.

Let $S:\textbf{q}\mapsto\textbf{u}$ be the solution operator of \eqref{SpaceFracMainPb} or \eqref{TimeMainPb0}. Then there exists a bounded linear operator $\mathcal{B}$ from $\mathcal{H}:=[B_{\delta}( C^{2,1}_{0}(\Omega\times(0,T])\cap C(\bar{\Omega})]^M$ to $[C^{2,1}_{0}(\Omega\times(0,T])\cap C(\bar{\Omega}]^M$ such that
\begin{equation}
	\lim_{\norm{\textbf{q}}_{\mathcal{H}}\to0}\frac{\|S(\textbf{q})-S(\bm{0})- \mathcal{B}(\textbf{q})\|_{[C^{2,1}_{0}(\Omega\times(0,T])\cap C(\bar{\Omega}]^M}}{\norm{\textbf{q}}_{\mathcal{H}}}=0.
\end{equation} 

Now we consider $\varepsilon_l=0$ for $l=2,\dots,K$ and fix $g_1$. 
Notice that if $F_i\in\mathcal{A}$, then each $F_i$ depends on each $u_j$ locally, for $j=1,\dots,M$. 
Then it is easy to check that $\mathcal{B}(\textbf{q})\Big|_{\varepsilon_1=0}$ is the solution map of the following system which is called the first-order linearisation system:

\begin{equation}\label{linear1main}
\begin{cases}
\partial_t \textbf{u}^{(1)}-\mathcal{L} \textbf{u}^{(1)}+\bm{\alpha}\cdot\nabla \textbf{u}^{(1)}=\textbf{p}(x,t)\textbf{u}^{(1)}+\textbf{g}_1 &\text{ in }\  \Omega\times(0,T],\\
\textbf{u}^{(1)}=0&\text{ on }\  \Omega^c\times(0, T],\\
\textbf{u}^{(1)}(x,0)=0,&\text{ in }\ \mathbb{R}^d,\\
\textbf{u}^{(1)} \geq 0 &\text{ in }\ \bar{\Omega}\times[0,T],
\end{cases}
\end{equation}

 In the following, we define
\begin{equation}\label{eq:ld1}
 \textbf{u}^{(1)}:=\mathcal{B}(\textbf{q})\Big|_{\varepsilon_1=0}. 
 \end{equation}
For notational convenience, we write
\begin{equation}\label{eq:ld2}
 \textbf{u}=\partial_{\varepsilon_1} \textbf{u}(x,t;\varepsilon)|_{\varepsilon=0}.
\end{equation}
We shall utilise such notations in our subsequent discussion in order to simplify the exposition and their meaning should be clear from the context. In a similar manner, we can define $\textbf{u}^{(l)} := \left. \partial_{\varepsilon_l} \textbf{u} \right\rvert_{\varepsilon = 0}$ for $l=2,\dots,K$, and obtain a sequence of similar systems.

For the higher orders, we consider
\[\textbf{u}^{(1,2)}:= \left. \partial_{\varepsilon_1} \partial_{\varepsilon_2} \textbf{u} \right\rvert_{\varepsilon = 0}.\]
Similarly, $\textbf{u}^{(1,2)}$ can be viewed as the output of the second-order Fr\'echet derivative of $S$ at a specific point. By following similar calculations in deriving \eqref{linear1main}, one can show that the second-order linearisation is given, for $u_i$ ($i=1,\dots,M$), as follows:
\begin{equation}\label{linear2main}
\begin{cases}
\partial_t u^{(1,2)}_i-\mathcal{L} u_i^{(1,2)}+\alpha\cdot\nabla u_i^{(1,2)}=p_i(x,t)u_i^{(1,2)}+2F^{(0\cdots2\cdots 0)}_iu_i^{(1)}u_i^{(2)}\\\qquad\qquad\qquad\qquad\qquad\qquad\qquad\qquad\qquad+\sum\limits_{j\neq i}^KF^{(0\cdots1\cdots1\cdots0)}_i(u_i^{(1)}u_j^{(2)}+u_i^{(2)}u_j^{(1)}) &\text{ in }\  \Omega\times(0,T],\\
\textbf{u}^{(1,2)}=0&\text{ on }\  \Omega^c\times(0, T],\\
\textbf{u}^{(1,2)}(x,0)=0,&\text{ in }\ \mathbb{R}^d,\\
\textbf{u}^{(1,2)} \geq 0 &\text{ in }\ \bar{\Omega}\times[0,T],
\end{cases}
\end{equation}

Inductively, for $l\in \mathbb{N}$, we consider
\[\textbf{u}^{(1,2,\dots,l)}:= \left. \partial_{\varepsilon_1} \partial_{\varepsilon_2} \cdots \partial_{\varepsilon_l} \textbf{u} \right\rvert_{\varepsilon = 0},\] and obtain a sequence of parabolic systems.

In the time-fractional case, we have the following first-order linearisation system: 
\begin{equation}\label{TimeLinear1}
\begin{cases}
{_{0}D}_t^{\bm{\beta}}\mathbf{u}^{(l)}-\mathbf{d}\Delta \mathbf{u}^{(l)}+\bm{\alpha}\cdot\nabla \mathbf{u}^{(l)}=\mathbf{p}\mathbf{u}^{(l)}+\mathbf{g}_1&\text{ in }\  \Omega\times(0,T],\\
 \mathbf{u}^{(l)}=0&\text{ on }\  \partial\Omega\times(0, T),\\
\mathbf{u}^{(l)}(x,0)=0&\text{ in }\  \Omega,\\
\mathbf{u}^{(l)}\geq 0&\text{ in }\  {\bar{\Omega}\times[0,T]},
\end{cases}
\end{equation}
and the other linearised systems similarly.

\section{Proof of Theorem \ref{SpaceFracThm}}\label{sect:SpaceFracProof}
In the following proof, we will always show the case for $u_1$, and the results for $u_i$ ($i=2,\dots,M$) hold similarly. Therefore, we will drop all subscript 1's, when it is clear from the context.

\subsection{Determining the Potentials $\mathbf{p}$}
For the nonlocal parabolic systems, we choose the input function $q$ such that 
\[q(x,t;\epsilon)=\sum_{l=1}^K\epsilon_lg_l(x,t),\quad (x,t)\in \Omega\times(0,T],\]
where \[g_1(x,t)=\varphi^n(x)V(t)>0, \quad \varphi(x) \text{ is a complete set in }L^2(\Omega),\]
and $g_l>0$ for $l=2,\dots,K$.

Then, the first order linearised system is given by 
\begin{equation}\label{linear1space}
\begin{cases}
\partial_t u^{(1)}-\mathcal{L} u^{(1)}+\alpha\cdot\nabla u^{(1)}=pu^{(1)}+g_1 &\text{ in }\  \Omega\times(0,T],\\
u^{(1)}=0&\text{ on }\  \Omega^c\times(0, T],\\
u^{(1)}(x,0)=0,&\text{ in }\ \mathbb{R}^d,\\
u^{(1)} \geq 0 &\text{ in }\ \bar{\Omega}\times[0,T],
\end{cases}
\end{equation}

Set $u_k$ to be the solution of \eqref{DNmapProb} corresponding to the input functions $\varphi^n$ and $p^k$, $k=1,2$.
For a given nonzero nonnegative function $h\in C_c^{2,1}(\Omega_a\times(0,T])$, we set $h_0=h$ on $\Omega_a\times(0,T]$ and $h_0=0$ on $(\mathbb{R}^d\setminus(\Omega\cup\Omega_a))\times(0,T]$, and introduce the function $ w(x,t;p)$ as the solution of the following adjoint problem
\begin{equation}\label{AdjointPb}
\begin{cases}

-\partial_t w -\mathcal{L}w+\alpha\cdot\nabla w = pw,\quad&\text{ in }\Omega\times(0,T],\\
w=h_0(x,t),\quad&\text{ on }\Omega^c\times(0,T],\\
w=0,\quad&\text{ in }\Omega\times\{T\}.\\

\end{cases}
\end{equation}
Then it is known that $w$ admits H\"older regularity \cite{CaffarelliVasseur2010AnnMathFracSQG,SilvestreFracDriftHolder}, and therefore uniformly bounded for a bounded domain $\Omega\times(0,T]$ and $\Omega_a\times(0,T]$.

Using the equation for $u_1$ in \eqref{linear1space}, we compute
\begin{align*}
&\int_{0}^T \int_{\Omega}\varphi^n(x)V(t)w(x,t;p^1)\,dxdt\\
=&\int_{0}^T\int_{\Omega}\left(\partial_t u_1- \mathcal{L}u_1+\alpha\cdot\nabla u_1-p^1u_1\right)w(x,t;p^1)\,dxdt\\
=&\int_{0}^T\int_{\Omega}\left(-\partial_t w -\mathcal{L}w+\alpha\cdot\nabla w-p^1w\right)u_1\,dxdt +\langle\Lambda^1_{p^1}(h),u_1\rangle|_{\Omega_a}\\
=&\langle\Lambda^1_{p^1}(h),u_1\rangle|_{\Omega_a},
\end{align*}
by the construction of $w$ in \eqref{AdjointPb}. 
Similarly, for the same equation for $u_2$ with potential $p^2$, we have 
\begin{equation*}
\int_{0}^T \int_{\Omega}\varphi^n(x)V(t)w(x,t;p^2)dxdt=\langle\Lambda^1_{p^2}(h),u_2\rangle|_{\Omega_a},
\end{equation*}
By \eqref{condtionSpace}, we obtain the equality 
\[\int_{0}^T \int_{\Omega}\varphi^n(x)V(t)w(x,t;p^1)\,dxdt=\int_{0}^T \int_{\Omega}\varphi^n(x)V(t)w(x,t;p^2)\,dxdt.\]
The completeness of $\{\varphi^n\}_n$ then implies 
\begin{equation}\label{Veq0}\int_0^TV(t)w(x,t;p^1)\,dt=\int_0^TV(t)w(x,t;p^2)\,dt\quad \text{ in }\Omega.\end{equation}

Next, given any test function $v$, we consider $V=v$ and $V=\partial_t v$, so that we have 
\begin{equation}\label{Veq1}
    \int_0^Tv(t)w(x,t;p^1)\,dt=\int_0^Tv(t)w(x,t;p^2)\,dt\quad \text{ in }\Omega,
\end{equation}
and 
\begin{equation}\label{Veq2}
    \int_0^T\partial_t v w(x,t;p^1)\,dt=\int_0^T\partial_t v w(x,t;p^2)\,dt\quad \text{ in }\Omega.
\end{equation}
In particular, we can apply the space operator $-\mathcal{L}+\alpha\cdot\nabla$ to the first case \eqref{Veq1}, to obtain
\begin{equation}\label{Veq3}
    \int_0^Tv(t)(-\mathcal{L}+\alpha\cdot\nabla)w(x,t;p^1)\,dt=\int_0^Tv(t)(-\mathcal{L}+\alpha\cdot\nabla)w(x,t;p^2)\,dt\quad \text{ in }\Omega.
\end{equation}

Consequently, multiplying \eqref{AdjointPb} by $v$ and integrating by parts over $(0,T)$, 
we find
\begin{equation}
\begin{split}
-\int_{0}^T\partial_t w(x,t;p^1) v\,dt+\int_0^T(-\mathcal{L}+\alpha\cdot\nabla)w(x,t;p^1)v\,dt=\int_0^T p^1(x)w(x,t;p^1)v\,dt;\\
-\int_{0}^T\partial_t w(x,t;p^2) v\,dt+\int_0^T(-\mathcal{L}+\alpha\cdot\nabla)w(x,t;p^2)v\,dt=\int_0^T p^2(x)w(x,t;p^2)v\,dt.
\end{split}
\end{equation}
Taking the difference of these two equations, and making use of the equalities \eqref{Veq1}-- \eqref{Veq3}, we have 
\[(p^1(x)-p^2(x))\int_0^T w(x,t;p^2)v\,dt=0,\]
so for convenience, we can drop the dependence on $p^k$, $k=1,2$.

Since $w$ satisfies \eqref{AdjointPb} with nonzero $h$, by the strong maximum principle, there exists $t_0\in(0,T]$ and $\epsilon_0>0$ such that 
\[\int_0^T w(x,t;p^2)v\,dt\geq\int_{t_0-\epsilon_0}^{t_0} w(x,t;p^2)v\,dt>0.\] 
Thus, $p^1(x)=p^2(x)$ in $\Omega$, and the proof is complete for the potentials.

\subsection{Determining the Interaction Functions $\mathbf{F}$}\label{sect:ProofSourceSpace}
We first note that having determined $\mathbf{p}$, the first order solutions $\mathbf{u}^1$ can be uniquely determined from \eqref{linear1space}. Therefore, $\mathbf{u}^1_1=\mathbf{u}^1_2=:\mathbf{u}^1$

Next, we consider the second-order linearisation. Once again, we consider the equation for $u_{1,k}$ (and drop all subscripts `1' as before), $k=1,2$, given by 
\begin{equation}\label{linear2space}
\begin{cases}
\partial_t u^{(1,2)}_k-\mathcal{L} u^{(1,2)}_k+\alpha\cdot\nabla u^{(1,2)}_k=p(x,t)u^{(1,2)}_k+2F^{(20\cdots 0)}_ku^{(1)}u^{(2)}\\
\qquad\qquad\qquad\qquad\qquad\qquad\qquad\qquad\qquad+\sum\limits_{j=2}^KF^{(10\cdots1\cdots0)}_k(u^{(1)}u_j^{(2)}+u^{(2)}u_j^{(1)}) &\text{ in }\  \Omega\times(0,T],\\
u^{(1,2)}_k=0&\text{ on }\  \Omega^c\times(0, T],\\
u^{(1,2)}_k(x,0)=0,&\text{ in }\ \mathbb{R}^d.
\end{cases}
\end{equation}
Taking the difference of the two equations for $u_k$ corresponding to $F_k$, we have
\begin{equation}
\begin{cases}
\partial_t \tilde{u}-\mathcal{L} \tilde{u}+\alpha\cdot\nabla \tilde{u}=p(x,t)\tilde{u}+2(F^{(20\cdots 0)}_1-F^{(20\cdots 0)}_2)u^{(1)}u^{(2)}\\
\qquad\qquad\qquad\qquad\qquad+\sum\limits_{j=2}^K(F^{(10\cdots1\cdots0)}_1-F^{(10\cdots1\cdots0)}_2)(u^{(1)}u_j^{(2)}+u^{(2)}u_j^{(1)}) &\text{ in }\  \Omega\times(0,T],\\
\tilde{u}=0&\text{ on }\  \Omega^c\times(0, T],\\
\tilde{u}(x,0)=0,&\text{ in }\ \mathbb{R}^d,
\end{cases}
\end{equation}
where $\tilde{u}=u_1^{(1,2)}-u_2^{(1,2)}$.

Multiplying both sides of the equation by $w$ in \eqref{AdjointPb} and integrating by parts, we have that 
\begin{multline}\label{SpaceEq}2(F^{(20\cdots 0)}_1-F^{(20\cdots 0)}_2)\int_0^T\int_\Omega u^{(1)}u^{(2)}w\,dxdt\\+\sum\limits_{j=2}^K(F^{(10\cdots1\cdots0)}_1-F^{(10\cdots1\cdots0)}_2)\int_0^T\int_\Omega (u^{(1)}u_j^{(2)}+u^{(2)}u_j^{(1)})w\,dxdt=0.\end{multline} 
Applying the maximum principle on $u^{(l)},u_j^{(l)}$ and $w$ for $h>0$, we have that the integrals 
\begin{equation}\label{MaxPrincipleSpaceIntegrals}\int_0^T\int_\Omega u^{(1)}u^{(2)}w\,dxdt,\quad\int_0^T\int_\Omega (u^{(1)}u_j^{(2)}+u^{(2)}u_j^{(1)})w\,dxdt>0.\end{equation}

Suppose all except one of the $F^{(\cdot)}$'s are fixed and known, say $\hat{F}=F^{(20\cdots0)}$ or $F^{(10\cdots1\cdots0)}$. Then, from \eqref{SpaceEq}, by \eqref{MaxPrincipleSpaceIntegrals}, we have that $\hat{F}_1=\hat{F}_2$. 

Similarly, this holds for $u_i$, $i=2,\dots,M$, and we have the unique identifiability for $\mathbf{F}^{(1)}$.

Observe that, when we consider the $l$-th order of linearisation, the terms of $\mathbf{u}$ attached to $\mathbf{F}$ on the right-hand-side (as in \eqref{linear2space} for the 2nd order of linearisation) depend only on the lower order linearised solutions of $u$. Therefore, by mathematical induction, we derive the unique identifiability result for the $l$-th order Taylor coefficient of $F$, and the result is proved.

\begin{remark}\label{rmk:SpaceExtNonConstant}
    It may be possible to extend this result to obtain unique identifiability for general $F_i$ with Taylor coefficients $F_i^{(\cdot)}(x,t)$ depending on both the space $x$ and time $t$ variables, by making use of unique continuation and Runge approximation properties. However, such properties are not yet known for our measurement map $\Lambda^1$. Previous results have only been obtained for the Dirichlet-to-Neumann measurement map (see, for instance \cite{RulandSalo2020FracHeatUCP}).
\end{remark}

\section{Proof of Theorem \ref{TimeFracThm}}\label{sect:TimeFracProof}

Next, we give the proof for the time-fractional case with the operator $\mathcal{L}$ being local. 
Once again, we will only show the case for $u_1$, and the results for $u_i$ ($i=2,\dots,M$) hold similarly. Therefore, we will drop all subscript 1's, when it is clear from the context.

We first begin with a simple observation:

\begin{lemma}
    Deriving the uniqueness of $\tilde{\textbf{p}}$ in 
    \begin{equation}\label{TimeMainPb2}
\begin{cases}
{_{0}D}_t^{\bm{\beta}}\mathbf{u}-\mathbf{d}\Delta_y \mathbf{u}+\bm{\alpha}\cdot\nabla_y \mathbf{u}=\tilde{\mathbf{p}}(y,t)\mathbf{u}+\tilde{\mathbf{F}}(y,t,\mathbf{u})+\mathbf{q}e^{-\frac{\bm{\alpha}x}{2\sqrt{\mathbf{d}}}} &\text{ in }\  \Omega\times(0,T],\\
 \mathbf{u}=0&\text{ on }\  \partial\Omega\times(0, T),\\
\mathbf{u}(y,0)=0&\text{ in }\  \Omega,\\
u_i\geq 0&\text{ in }\  {\bar{\Omega}\times[0,T]}.
\end{cases}
\end{equation}
    is equivalent to deriving the uniqueness of $\textbf{p}$ in 
    \begin{equation}\label{TimeMainPb}
\begin{cases}
{_{0}D}_t^{\bm{\beta}}\mathbf{u}-\Delta_x \mathbf{u}=\mathbf{p}(x,t)\mathbf{u}+\mathbf{F}(x,t,\mathbf{u})+\mathbf{q} &\text{ in }\  \Omega\times(0,T],\\
 \mathbf{u}=0&\text{ on }\  \partial\Omega\times(0, T),\\
\mathbf{u}(x,0)=0&\text{ in }\  \Omega,\\
u_i\geq 0&\text{ in }\  {\bar{\Omega}\times[0,T]},
\end{cases}
\end{equation}
    when considering the source-to-boundary map $\mathcal{M}$ mapping $\textbf{p}\text{ or }\tilde{\textbf{p}}$ to the Neumann boundary of the elliptic space operator acting on $\textbf{u}$, i.e. either $\mathbf{d}\Delta_y +\bm{\alpha}\cdot\nabla_y $ or $\Delta_x$, given input $\textbf{F}$ and $\textbf{q}$.
	 
\end{lemma}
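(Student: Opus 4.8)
The plan is to reduce the more general mixed operator $\mathbf{d}\Delta_y+\bm{\alpha}\cdot\nabla_y$ to the pure Laplacian $\Delta_x$ by an explicit exponential change of the spatial variable, and then track what this substitution does to the source, the potential, the interaction functions and the Neumann-type boundary data. First I would introduce, componentwise, the substitution $\mathbf{u}(y,t)=e^{-\frac{\bm{\alpha}\cdot y}{2\sqrt{\mathbf{d}}}}\mathbf{v}(x,t)$ (with $x$ a rescaled copy of $y$ chosen so that $\mathbf{d}\Delta_y$ becomes $\Delta_x$), and compute
\begin{equation}\label{eq:exptransf}
\left(\mathbf{d}\Delta_y+\bm{\alpha}\cdot\nabla_y\right)\!\left(e^{-\frac{\bm{\alpha}\cdot y}{2\sqrt{\mathbf{d}}}}\mathbf{v}\right)=e^{-\frac{\bm{\alpha}\cdot y}{2\sqrt{\mathbf{d}}}}\left(\mathbf{d}\Delta_y\mathbf{v}-\frac{|\bm{\alpha}|^2}{4}\mathbf{v}\right),
\end{equation}
so that the first-order convection term is absorbed into a shift of the potential $\tilde{\mathbf{p}}\mapsto\tilde{\mathbf{p}}-\frac{|\bm{\alpha}|^2}{4}$, and after the linear rescaling $y\mapsto x$ the operator becomes $\Delta_x$. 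The time-fractional term ${_{0}D}_t^{\bm{\beta}}$ is untouched since the substitution is purely spatial and time-independent, and the Dirichlet and initial conditions are preserved because $e^{-\frac{\bm{\alpha}\cdot y}{2\sqrt{\mathbf{d}}}}$ is smooth and nonvanishing (nonnegativity of the solution is also preserved for the same reason).

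Next I would check that the nonlinear structure is respected: because $\tilde{\mathbf{F}}\in\mathcal{A}$ has monomials $u_1^{k_1}\cdots u_M^{k_M}$ with $k_1+\cdots+k_M\ge 2$, the conjugation produces $\tilde F_i^{(k_1\cdots k_M)}e^{-\frac{\bm{\alpha}\cdot y}{2\sqrt{\mathbf d}}(k_1+\cdots+k_M-1)}v_1^{k_1}\cdots v_M^{k_M}$, i.e. the transformed interaction functions $\mathbf{F}$ again lie in $\mathcal{A}$ (with space-dependent Taylor coefficients, but that is irrelevant to the equivalence claim since the map only involves \emph{given} $\mathbf F$), and the correspondence $\tilde{\mathbf F}\leftrightarrow\mathbf F$ is a bijection. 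The source $\mathbf{q}e^{-\frac{\bm{\alpha}\cdot x}{2\sqrt{\mathbf d}}}$ in \eqref{TimeMainPb2} is exactly what is needed so that the transformed source in \eqref{TimeMainPb} is the clean $\mathbf{q}$; thus the admissible source classes correspond bijectively as well.

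Then I would verify that the measurement data transform bijectively. The exterior/boundary elliptic-operator data $\left(\mathbf{d}\Delta_y+\bm{\alpha}\cdot\nabla_y\right)\mathbf{u}\big|_{\partial\Omega\times(0,T)}$, restricted to $\Gamma$, equals $e^{-\frac{\bm{\alpha}\cdot y}{2\sqrt{\mathbf d}}}\big(\Delta_x\mathbf v-\frac{|\bm\alpha|^2}{4}\mathbf v\big)\big|_{\Gamma\times(0,T)}$; since $\mathbf v=\mathbf u=\mathbf 0$ on $\partial\Omega$, the zeroth-order term drops and this is just a fixed smooth nonvanishing multiple of $\Delta_x\mathbf v\big|_{\Gamma\times(0,T)}$. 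Hence the source-to-boundary map $\mathcal M$ for \eqref{TimeMainPb2} and the one for \eqref{TimeMainPb} determine each other, and equality of one pair of maps is equivalent to equality of the other; pulling the recovered $\mathbf p$ back through $\mathbf p\mapsto\tilde{\mathbf p}=\mathbf p+\frac{|\bm\alpha|^2}{4}$ (composed with the rescaling of variables) gives the asserted equivalence of uniqueness statements, and symmetrically in the other direction.

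I expect the main obstacle to be bookkeeping rather than conceptual: getting the rescaling $y\mapsto x$ and the constant $\sqrt{\mathbf d}$ consistent when $\mathbf d$ is a vector of distinct diffusivities $d_i$ (so each component $v_i$ lives on its own rescaled domain $\Omega_i$), and making sure that "the domain $\Omega$" is interpreted compatibly for the boundary measurement on $\Gamma$ after rescaling — one must either assume the $d_i$ equal, or phrase the correspondence componentwise with matching rescaled boundary portions $\Gamma_i$. The other point requiring a line of care is confirming that multiplication by the fixed, strictly positive, smooth weight $e^{-\frac{\bm{\alpha}\cdot y}{2\sqrt{\mathbf d}}}$ preserves the function-space regularity ($C^{2,1}_0\cap C(\bar\Omega\times[0,T])$, the Hölder class for $\mathbf p$, and nonnegativity), which is immediate since the weight and its reciprocal are bounded with all needed derivatives on the bounded set $\bar\Omega$.
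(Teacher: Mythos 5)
Your proposal is correct and follows essentially the same route as the paper: an exponential (Liouville-type) conjugation combined with a spatial rescaling to absorb the drift $\bm{\alpha}\cdot\nabla_y$ into a constant shift of the potential, then tracking how $\mathbf{q}$, $\mathbf{F}$, and the boundary data transform. If anything, you are more careful than the paper on two points it glosses over — that the Neumann-type data on $\Gamma$ transform by a fixed nonvanishing factor because $\mathbf{u}=\mathbf{0}$ there, and that distinct diffusivities $d_i$ force componentwise rescaled domains — so no gap to report.
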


\begin{proof}
    We use the transformation $y:=\frac{x}{\sqrt{d_i}}$, $u_i(x,t):=u_i(y,t)e^{-\frac{\alpha_ix}{2\sqrt{d_i}}}$, $p_i:=\tilde{p}_i+\frac{\alpha_i^2}{4d_i}$. Then, 
\begin{align*}
    &\quad{_{0}D}_t^{\bm{\beta}}u_i(y,t)-d_i\Delta_y u_i(y,t)+\alpha_i\cdot\nabla_y u_i(y,t)-\tilde{p}_i(y,t)u_i(y,t)-\tilde{F}_i(y,t,u_i(y,t))-q_i\\
    &=e^{\frac{\alpha_ix}{2\sqrt{d_i}}}{_{0}D}_t^{\bm{\beta}}u_i(x,t)-\nabla_x\cdot\nabla_x \left(e^{\frac{\alpha_ix}{2\sqrt{d_i}}} u_i(x,t)\right)+\frac{\alpha_i}{\sqrt{d_i}}\nabla_x \cdot\left(e^{\frac{\alpha_ix}{2\sqrt{d_i}}}u_i(x,t)\right)\\&\quad-\tilde{p}_i(y,t)e^{\frac{\alpha_ix}{2\sqrt{d_i}}}u_i(x,t)-\tilde{F}_i(y,t,e^{\frac{\alpha_ix}{2\sqrt{d_i}}}u_i(x,t))-q_i\\
    &=e^{\frac{\alpha_ix}{2\sqrt{d_i}}}{_{0}D}_t^{\bm{\beta}}u_i(x,t)-\nabla_x\cdot \left(\frac{\alpha_i}{2\sqrt{d_i}}e^{\frac{\alpha_ix}{2\sqrt{d_i}}} u_i(x,t)+e^{\frac{\alpha_ix}{2\sqrt{d_i}}} \nabla_x u_i(x,t)\right)+\frac{\alpha_i}{\sqrt{d_i}}\nabla_x \cdot\left(e^{\frac{\alpha_ix}{2\sqrt{d_i}}}u_i(x,t)\right)\\&\quad-\tilde{p}_i(y,t)e^{\frac{\alpha_ix}{2\sqrt{d_i}}}u_i(x,t)-\tilde{F}_i(y,t,e^{\frac{\alpha_ix}{2\sqrt{d_i}}}u_i(x,t))-q_i\\
    &=e^{\frac{\alpha_ix}{2\sqrt{d_i}}}{_{0}D}_t^{\bm{\beta}}u_i(x,t)-\frac{\alpha_i}{2\sqrt{d_i}}\nabla_x\cdot\left(e^{\frac{\alpha_ix}{2\sqrt{d_i}}} u_i(x,t)\right)-\frac{\alpha_i}{2\sqrt{d_i}}e^{\frac{\alpha_ix}{2\sqrt{d_i}}} \cdot\nabla_x u_i(x,t)-e^{\frac{\alpha_ix}{2\sqrt{d_i}}} \Delta_x u_i(x,t)\\&\quad+\frac{\alpha_i}{\sqrt{d_i}}\nabla_x \cdot\left(e^{\frac{\alpha_ix}{2\sqrt{d_i}}}u_i(x,t)\right)-\tilde{p}_i(y,t)e^{\frac{\alpha_ix}{2\sqrt{d_i}}}u_i(x,t)-\tilde{F}_i(y,t,e^{\frac{\alpha_ix}{2\sqrt{d_i}}}u_i(x,t))-q_i\\
    &=e^{\frac{\alpha_ix}{2\sqrt{d_i}}}{_{0}D}_t^{\bm{\beta}}u_i(x,t)-\frac{\alpha_i}{2\sqrt{d_i}}e^{\frac{\alpha_ix}{2\sqrt{d_i}}} \cdot \nabla_x u_i(x,t)-e^{\frac{\alpha_ix}{2\sqrt{d_i}}} \Delta_x u_i(x,t)\\&\quad+\frac{\alpha_i}{2\sqrt{d_i}}\nabla_x \cdot\left(e^{\frac{\alpha_ix}{2\sqrt{d_i}}}u_i(x,t)\right)-\tilde{p}_i(y,t)e^{\frac{\alpha_ix}{2\sqrt{d_i}}}u_i(x,t)-\tilde{F}_i(y,t,e^{\frac{\alpha_ix}{2\sqrt{d_i}}}u_i(x,t))-q_i\\
    &=e^{\frac{\alpha_ix}{2\sqrt{d_i}}}{_{0}D}_t^{\bm{\beta}}u_i(x,t)-\frac{\alpha_i}{2\sqrt{d_i}}e^{\frac{\alpha_ix}{2\sqrt{d_i}}} \cdot\nabla_x u_i(x,t)-e^{\frac{\alpha_ix}{2\sqrt{d_i}}} \Delta_x u_i(x,t)\\&\quad+\frac{\alpha_i^2}{4d_i}e^{\frac{\alpha_ix}{2\sqrt{d_i}}}u_i(x,t)+\frac{\alpha_i}{2\sqrt{d_i}} e^{\frac{\alpha_ix}{2\sqrt{d_i}}}\cdot\nabla_x u_i(x,t)-\tilde{p}_i(y,t)e^{\frac{\alpha_ix}{2\sqrt{d_i}}}u_i(x,t)-\tilde{F}_i(y,t,e^{\frac{\alpha_ix}{2\sqrt{d_i}}}u_i(x,t))-q_i\\
    &=e^{\frac{\alpha_ix}{2\sqrt{d_i}}}{_{0}D}_t^{\bm{\beta}}u_i(x,t)-e^{\frac{\alpha_ix}{2\sqrt{d_i}}} \Delta_x u_i(x,t)+\left(\frac{\alpha_i^2}{4d_i}-\tilde{p}_i(y,t)\right)e^{\frac{\alpha_ix}{2\sqrt{d_i}}}u_i(x,t)-\tilde{F}_i(y,t,e^{\frac{\alpha_ix}{2\sqrt{d_i}}}u_i(x,t))-q_i.
\end{align*}
Since $\alpha_i$ and $d_i$ are known and fixed, the two systems are equivalent, by setting $F_i(x,t,u_i)=\tilde{F}_i(y,t,e^{\frac{\alpha_ix}{2\sqrt{d_i}}}u_i(x,t))$, and recovering $p_i$ is the same as recovering $\tilde{p}_i$, while the recovery of $\bm{\beta}$ remains unchanged.
\end{proof}

Therefore, it suffices to consider the unique identifiability issue of $\textbf{p}$ and $\mathbf{F}$ in \eqref{TimeMainPb}, given by the measurement map
\begin{equation}\label{MeasureMapTime}
        \Lambda^2_{\textbf{p},\mathbf{F}}(\textbf{q})= \partial_\nu \textbf{u}|_{\Gamma\times(0,T)},
	\end{equation}
where $\nu(x)$ is the unit outward normal vector on the boundary and $u$ is the solution to \eqref{TimeMainPb}, defined once again in the distributional sense for any $h$ defined on the boundary $\Gamma\subset\partial\Omega$.

For the nonlocal time-fractional diffusion systems, we choose the input function $q$ such that 
\begin{equation}
    q(x,t;\epsilon)=\sum_{l=1}^K\epsilon_lg_l(x,t),\quad (x,t)\in \Omega\times(0,T],
\end{equation}
where 
\begin{equation}\label{g1}
    g_1(x,t)=\phi^n(x,t)>0, \quad \phi(x,t) \text{ is a complete set in }L^2(\Omega\times(0,T)),
\end{equation}
\begin{equation}\label{g2}
    g_2(x,t)=e^{-at}f(x)>0, \quad a>0,
\end{equation}
and $g_l>0$ for $l=3,\dots,K$.

\subsection{Determining the Potentials $\mathbf{p}$}

Then, the first order linearised system is given by 
\begin{equation}\label{TimeLinear1u1}
\begin{cases}
{_{0}D}_t^{\bm{\beta}}u^{(1)}-\Delta u^{(1)}=pu^{(1)}+\phi^n(x,t)&\text{ in }\  \Omega\times(0,T],\\
u^{(1)}=0&\text{ on }\  \partial\Omega\times(0, T),\\
u^{(1)}(x,0)=0&\text{ in }\  \Omega,
\end{cases}
\end{equation}

Set $u_k$ to be the solutions of \eqref{TimeLinear1u1} corresponding to the input functions $\phi^n$ and $p^k$, $k=1,2$. 
Observe that, unlike the space-fractional case where we consider an accessible region $\Omega_a$, we consider partial boundary data $\Gamma\subset\partial\Omega$ in the time-fractional case. Therefore, for a given nonzero nonnegative function $h\in C_c(\Gamma\times(0,T))$, we set $h_0=h$ on $\Gamma\times(0,T)$ and $h_0=0$ on $(\partial\Omega\setminus\Gamma)\times(0,T)$, and introduce the function $ w(x,t;p)$ as the solution of the following adjoint problem
\begin{equation}\label{AdjointPbTime}
\begin{cases}
{_{t}D}_T^{\bm{\beta}}w-\Delta w=pw &\text{ in }\  \Omega\times(0,T),\\
 w=h_0 &\text{ on }\  \partial\Omega\times(0, T),\\
w(x,T)=0, &\text{ in }\  \Omega,\\
\end{cases}
\end{equation}
Then, by Theorem 7.9 in \cite{JinbookTimeFrac}, it is known that $w$ admits H\"older regularity, and is therefore uniformly bounded for a bounded domain $\Omega\times(0,T)$. 

Multiplying \eqref{TimeLinear1u1} by $w$, by Green's second identity, we compute
\begin{align*}
&\int_{0}^T \int_{\Omega}\phi^n(x,t)w(x,t;p^1)\,dxdt\\
=&\int_{0}^T\int_{\Omega}\left({_{0}D}_t^{\bm{\beta}} u_1- \Delta u_1-p^1u_1\right)w(x,t;p^1)\,dxdt\\
=&\int_{0}^T\int_{\Omega}\left({_{t}D}_T^{\bm{\beta}} w -\Delta w-p^1w\right)u_1\,dxdt -\int_0^T\int_{\partial\Omega}w\nabla u_1\cdot\nu\,dxdt\\
=&-\int_0^T\int_\Gamma h\partial_\nu u_1 \,dxdt,
\end{align*}
by the construction of $w$ in \eqref{AdjointPbTime}. 
Here, we recall that $\nu(x)$ is the unit outward normal vector on the boundary, 
and the second equality uses the following fractional integration by parts formula (see Lemma 2.1 of \cite{TingWang2016CaputoByPartsACFuncs}):
\begin{equation*}
\int_0^T{_{0}D_t^{\bm{\beta}}}u(t)w(t)dt=\int_0^T u(t){_{t}D_T^{\bm{\beta}}}w(t)dt,\quad\text{ for any functions }\ u,w\in AC[0,T], \text{and}\ w(T)=0,
\end{equation*}
where $AC[0,T]$ is the space of absolutely continuous functions on $[0,T]$.
Similarly, for the same equation for $u_2$ with potential $p^2$, we have 
\begin{equation*}
\int_{0}^T \int_{\Omega}\phi^n(x,t)w(x,t;p^2)dxdt=-\int_0^T\int_\Gamma h\partial_\nu u_2 \,dxdt.
\end{equation*}
By \eqref{condtionTime}, we obtain the equality 
\[\int_{0}^T \int_{\Omega}\phi^n(x,t)w(x,t;p^1)\,dxdt=\int_{0}^T \int_{\Omega}\phi^n(x,t)w(x,t;p^2)\,dxdt.\]
The completeness of $\{\phi^n\}_n$ then implies 
\begin{equation}w(x,t;p^1)=w(x,t;p^2)=:w(x,t)\quad \text{ in }\Omega\times(0,T).\end{equation}

Substituting this into \eqref{AdjointPbTime} and taking the difference of the two expressions corresponding to $p^1$ and $p^2$, we find
\begin{equation*}
(p^1-p^2)w(x,t)=0 \quad \text{ in }\Omega\times(0,T).
\end{equation*}
The maximum principle for time-fractional diffusion equation (Lemma \ref{StrongMaxPrincipleTime}) can then be applied to deduce that $w(x,t)>0$, and we have the result $p^1(x,t)=p^2(x,t)$ in $\Omega\times(0,T)$.

\begin{remark}\label{MoreDataPxtRemark}
    Note that if we consider instead $g_1=\varphi^n(x)V(t)$ as in the proof of the space-fractional case, we can obtain $p^1(x)=p^2(x)$ for $p$ independent of time, by considering $V=v$ and $V={_{0}D_t^{\bm{\beta}}v}$. This means that with two data points in time, we can obtain the result when $p$ is independent of the time dimension. Instead, here we consider $g_1=\phi^n(x,t)$ dense, which means we have to input more source functions $q$, but we obtain the result for a more general $p(x,t)$.
\end{remark}

\subsection{Determining the Time-Fractional Orders \texorpdfstring{$\bm{\beta}$}{\beta}}
Next, to obtain the unique determination of the fractional orders, we make use of the measurement $\Lambda^3$ and the first-order linearisation about $g_2$, for $g_2$ given in \eqref{g2}. 
Then, we have
\begin{equation}\label{linear21}
\begin{cases}
{_{0}D}_t^{\bm{\beta}}\mathbf{u}^{(2)}-\Delta \mathbf{u}^{(2)}=\mathbf{p}\mathbf{u}^{(2)}+e^{-at}\mathbf{f}(x)&\text{ in }\  \Omega\times(0,T],\\
\mathbf{u}^{(2)}=0&\text{ on }\  \partial\Omega\times(0, T),\\
\mathbf{u}^{(2)}(x,0)=0&\text{ in }\  \Omega,\\
\mathbf{u}\geq0&\text{ in }\  \bar{\Omega}\times[0,T],
\end{cases}
\end{equation}
where $e^{-at}\mathbf{f}(x)=e^{-at}f(x)(1,1,\dots,1)$ for $a,f>0$.

Observe that \eqref{linear21} is now uncoupled, and we can consider the equation for each $u_i$ individually. Then, $u_i$ is known to be time-analytic, by the time-analyticity of the associated Green's function and the time-analyticity of the source function $e^{-at}f(x)$ (see, for instance, Theorem 3.7(i) of \cite{JinbookTimeFrac} or the proof of Lemma 5(ii) of \cite{LiHuangLiu2024-TimeFracCoupledDiffusion-InversePbIdentifyExponent}). Hence, we can uniquely extend the solution $\mathbf{u}$ from $(0,T)$ to $(0,\infty)$, allowing us to apply the Laplace transform.  

Consider two fractional orders $\bm{\beta}^1$ and $\bm{\beta}^2$, with corresponding solutions $\mathbf{u}_1$ and $\mathbf{u}_2$. 
Applying the Laplace transform on both sides of the equation \eqref{linear21} for $\bm{\beta}^k$ ($k=1,2$), for $s>0$, we find

\begin{equation}
\begin{cases}
(-\Delta-\mathbf{p}+s^{\bm{\beta}^1})\hat{\mathbf{u}}_1=1/(s+a){f(x)} &\text{ in }\  \Omega,\\
\hat{\mathbf{u}}_1=0 &\text{ on }\  \partial\Omega,
\end{cases}
\end{equation}
and
\begin{equation}\label{u2}
\begin{cases}
(-\Delta-\mathbf{p}+s^{\bm{\beta}^2})\hat{\mathbf{u}}_2=1/(s+a){f(x)} &\text{ in }\  \Omega,\\
\hat{\mathbf{u}}_2=0&\text{ on }\  \partial\Omega,
\end{cases}
\end{equation}
where the $i$-th component of $\hat{\mathbf{u}}_k$ is the Laplace transform of the $i$-th component of $\mathbf{u}_k^{(2)}$. Here, $s^{\bm{\beta}}$ corresponding to the $i$-th equation is given by $\sum_{j=1}^{B_i}b_{j,i}s^{\beta_{j,i}}$. 

Taking the difference between the above two equations, it turns out that $\hat{\mathbf{u}}_1-\hat{\mathbf{u}}_2$
satisfies
\begin{equation}\label{linear2}
\begin{cases}
(-\Delta-\mathbf{p}+s^{\bm{\beta}^1})(\hat{\mathbf{u}}_1-\hat{\mathbf{u}}_2)(s)=(s^{\bm{\beta}^2}-s^{\bm{\beta}^1})\hat{\mathbf{u}}_2 &\text{ in }\  \Omega,\\
 \hat{\mathbf{u}}_1-\hat{\mathbf{u}}_2=0 &\text{ on }\  \partial\Omega.
\end{cases}
\end{equation}

We want to show that $\bm{\beta}^1=\bm{\beta}^2$, given \eqref{condtionTimeOrder}. Assume on the contrary that $\bm{\beta}^1\neq\bm{\beta}^2$, i.e. 
\[\begin{pmatrix}
    b_{1,1}(x)s^{\beta_{1,1}^1}+\cdots+b_{B_1,1}(x)s^{\beta_{B_1,1}^1}\\\vdots\\b_{1,M}(x)s^{\beta_{1,M}^1}+\cdots+b_{B_M,M}(x)s^{\beta_{B_M,M}^1}
\end{pmatrix}\neq 
\begin{pmatrix}
    b_{1,1}(x)s^{\beta_{1,1}^2}+\cdots+b_{B_1,1}(x)s^{\beta_{B_1,1}^2}\\\vdots\\b_{1,M}(x)s^{\beta_{1,M}^2}+\cdots+b_{B_M,M}(x)s^{\beta_{B_M,M}^2}
\end{pmatrix},\]
where $\beta_{j_1,i}^k<\beta_{j_2,i}^k$ when $j_1<j_2$ for each $i,k$. Without loss of generality, and renumbering the $u_i$'s if necessary, we order all $\beta_{j,i}$ for each $k$, and suppose that $\hat{\beta}$ is the smallest $(j,i)$-th order $\beta_{j,i}$ such that $\hat{\beta}^1< \hat{\beta}^2$. 

We introduce the auxiliary function $w$ defined by
\begin{equation}\label{w}
w(s)=\frac{(\hat{\mathbf{u}}_1-\hat{\mathbf{u}}_2)}{s^{\hat{\beta}^1}-s^{\hat{\beta}^2}}, s>0,
\end{equation}
Multiplying both sides of \eqref{linear2} by $1/(s^{\hat{\beta}^1}-s^{\hat{\beta}^2})$, we see that $w$ satisfies the boundary value problem
\begin{equation}\label{wEq}
\begin{cases}
(-\Delta-\mathbf{p}+s^{\bm{\beta}^1})(\mathbf{w})(s)=\Sigma  \hat{\mathbf{u}}_2 &\text{ in }\  \Omega,\\
w=0 &\text{ on }\  \partial\Omega,
\end{cases}
\end{equation}
where \[\Sigma=\begin{pmatrix}
    b_{1,1}(x)\cfrac{s^{\beta_{1,1}^2}-s^{\beta_{1,1}^1}}{s^{\hat{\beta}^1}-s^{\hat{\beta}^2}}+\cdots+b_{B_1,1}(x)\cfrac{s^{\beta_{B_1,1}^2}-s^{\beta_{B_1,1}^1}}{s^{\hat{\beta}^1}-s^{\hat{\beta}^2}}\\\vdots\\ -b_{j,i}+b_{j+1,i}\cfrac{s^{\beta_{j+1,i}^2}-s^{\beta_{j+1,i}^1}}{s^{\hat{\beta}^1}-s^{\hat{\beta}^2}}+\cdots+b_{B_i,i}(x)\cfrac{s^{\beta_{B_i,i}^2}-s^{\beta_{B_i,i}^1}}{s^{\hat{\beta}^1}-s^{\hat{\beta}^2}}\\\vdots\\b_{1,M}(x)\cfrac{s^{\beta_{1,M}^2}-s^{\beta_{1,M}^1}}{s^{\hat{\beta}^1}-s^{\hat{\beta}^2}}+\cdots+b_{B_M,M}(x)\cfrac{s^{\beta_{B_M,M}^2}-s^{\beta_{B_M,M}^1}}{s^{\hat{\beta}^1}-s^{\hat{\beta}^2}},
\end{pmatrix}\]
with $s^{\beta_{j',i'}^1}-s^{\beta_{j',i'}^2}=0$ for $\beta_{j',i'}<\beta_{j,i}$. 
From the property of the Laplace transform, it follows that $\hat{\mathbf{u}}_1(s)$ and $\hat{\mathbf{u}}_2(s)$ are analytic with respect to $s>0$, so, by definition, $\mathbf{w}(s)$ is continuous for $s>0$. 

Next we discuss the limit of $\mathbf{w}(s)$ as $s\rightarrow 0$. We claim that
\begin{equation}\label{w0}
\lim_{s\rightarrow 0}\mathbf{w}(s)=\mathbf{w_0}\ \mathrm{in}\ [H^2(\Omega)]^M,
\end{equation}
where $\mathbf{w_0}$ solves the following boundary value problem:
\begin{equation}\label{w0Eq}
\begin{cases}
(-\Delta-\mathbf{p})\mathbf{w_0}=D \hat{\mathbf{u}}_2 &\text{ in }\  \Omega,\\
\mathbf{w_0}=0 &\text{ on }\  \partial\Omega,
\end{cases}
\end{equation}
Here $D$ is a matrix with
\[D=\begin{pmatrix}
    b_{1,1}(x)\gamma_{1,1}+\cdots+b_{B_1,1}(x)\gamma_{B_1,1}\\\vdots\\ -b_{j,i}+b_{j+1,i}\gamma_{j+1,i}+\cdots+b_{B_i,i}(x)\gamma_{B_i,i}\\\vdots\\b_{1,M}(x)\gamma_{1,M}+\cdots+b_{B_M,M}(x)\gamma_{B_M,M},
\end{pmatrix}\]
where $\gamma_{j',i'}=0$ if $\beta_{j',i'}<\beta_{j,i}$ and $-1$ otherwise.
Setting $\mathbf{z}(s)=\mathbf{w}(s)-\mathbf{w_0}(s)$, we take the difference between \eqref{wEq} and \eqref{w0Eq} to obtain
\begin{equation}
\begin{cases}
(-\Delta-\mathbf{p}+s^{\bm{\beta}^1})(\mathbf{z})(s)=(\Sigma-D)\hat{\mathbf{u}}_2 -s^{\bm{\beta}^1} \mathbf{w}_0&\text{ in }\  \Omega,\\
\mathbf{z}=0 &\text{ on }\  \partial\Omega.
\end{cases}
\end{equation}
Since these equations are uncoupled, this is a second-order elliptic equation in $z_{i'}$ for each $i'$, and the regularity estimate for elliptic equations implies that
\begin{equation}
\norm{z_{i'}(s)}_{H^2(\Omega)}\leq C\left(\norm{(\Sigma-D)_{i'}\hat{u}_{2,i'}}_{L^2(\Omega)}+\|s^{\bm{\beta}^1_{i'}} w_{0,i'}\|_{L^2(\Omega)}\right).
\end{equation}
But $\hat{u}_{2,i'}$ and $w_{0,i'}$ satisfy their own elliptic equations \eqref{u2} and \eqref{w0Eq}, respectively. Hence, 
\begin{equation}\|s^{\bm{\beta}^1_{i'}} w_{0,i'}\|_{L^2(\Omega)}\leq C_1 s^{\bm{\beta}^1}\|\hat{u}_{2,i}\|_{L^2(\Omega)}\sum_{j'=1}^{B_{i'}}|b_{j',i'}|
\end{equation}
and 
\begin{equation}\label{DiffEq}
\norm{(\Sigma-D)_{i'}\hat{u}_{2,i'}}_{L^2(\Omega)}\leq 
\norm{\hat{u}_{2,i'}}_{L^2(\Omega)}\left[b_{1,i'}(x)\left(\cfrac{s^{\beta_{1,i'}^2}-s^{\beta_{1,i'}^1}}{s^{\hat{\beta}^1}-s^{\hat{\beta}^2}}-\gamma_{1,i'}\right)+\cdots+b_{B_{i'},i'}(x)\left(\cfrac{s^{\beta_{B_{i'},i'}^2}-s^{\beta_{B_{i'},i'}^1}}{s^{\hat{\beta}^1}-s^{\hat{\beta}^2}}-\gamma_{B_{i'},i'}\right)\right],
\end{equation}
where 
\begin{equation}
\norm{\hat{u}_{2,i'}}_{L^2(\Omega)}\leq \frac{C_2}{s+a}\norm{f}_{L^2(\Omega)}.
\end{equation}
Now we investigate the terms in \eqref{DiffEq}. We observe that 
\[\cfrac{s^{\beta_{j',i'}^2}-s^{\beta_{j',i'}^1}}{s^{\hat{\beta}^1}-s^{\hat{\beta}^2}}-\gamma_{j',i'}=\begin{cases}
    0-0=0& \text{ if }\beta_{j',i'}^1,\beta_{j',i'}^2<\hat{\beta}^1,\\
    \cfrac{s^{\beta_{j',i'}^2}-s^{\beta_{j',i'}^1}}{s^{\hat{\beta}^1}-s^{\hat{\beta}^2}}+1=s^{\hat{\beta}^1}\cfrac{s^{\beta_{j',i'}^2-\hat{\beta}^1}-s^{\beta_{j',i'}^1-\hat{\beta}^1}+1-s^{\hat{\beta}^2-\hat{\beta}^1}}{1-s^{\hat{\beta}^2-\hat{\beta}^1}}& \text{ if }\hat{\beta}^1<\beta_{j',i'}^1,\beta_{j',i'}^2,\\
    \cfrac{s^{\hat{\beta}^2}-s^{\hat{\beta}^1}}{s^{\hat{\beta}^1}-s^{\hat{\beta}^2}}+1=0 & \text{ if }i=i',j=j'.
\end{cases}\]
When $s\to0$, 
\[s^{\hat{\beta}^1}\cfrac{s^{\beta_{j',i'}^2-\hat{\beta}^1}-s^{\beta_{j',i'}^1-\hat{\beta}^1}+1-s^{\hat{\beta}^2-\hat{\beta}^1}}{1-s^{\hat{\beta}^2-\hat{\beta}^1}}\to0,\] so we have that $\Sigma-D\to0$ as $s\to0$.
Combining the above estimates, we obtain that $\lim\limits_{s\rightarrow 0}\|z_i(s)\|_{L^2(\Omega)}=0$.

Next, applying the maximum principle on \eqref{linear21}, we have that $\mathbf{u}^{(2)}_2>0$, hence its Laplace transform $\hat{\mathbf{u}}_2>0$. Consequently, by \eqref{w0Eq}, since $D$ is negative, $\mathbf{w}_0<0$ in $\Omega$, that is, $w_{0,i'}<0$ in $\Omega$ for all $i'=1,\dots,M$. Then, at the observation point $x_0\in \Omega$, from the relation $\mathbf{w}_0=\lim_{s\rightarrow 0} \mathbf{w}(s)$, we have
 \begin{equation}
\lim_{s\rightarrow 0}w_{i'}(x_0;s)=w_{0,i'}(x_0)<0.
\end{equation}
This indicates that we can choose a small $0<\epsilon<1$ such that $w_{i'}(x_0;s)<0$ for any $s\in(0,\epsilon)$ and $i'$, which implies that
 \begin{equation}
u_{i',1}(x_0;s)<u_{i',2}(x_0;s) \ \  \text{ for all }\  s\in (0,\varepsilon), i'
\end{equation}
This yields a contradiction since $u_{i_0,1}=u_{i_0,2}$ at $x_0\times (0,T)$ implies $u_{i_0,1}=u_{i_0,2}$ at $\{x_0\}\times(0,\infty)$ by its time analyticity, and hence $\hat{u}_{i_0,1}(x_0;s)=\hat{u}_{i_0,1}(x_0;s)$ for any $s>0$.

\begin{remark}
    Our result differs from \cite{LiHuangLiu2024-TimeFracCoupledDiffusion-InversePbIdentifyExponent} in many different aspects, including but not limited to
    \begin{enumerate}
        \item We considered a system of equations coupled in the interaction function term $\mathbf{F}$ instead of in the time-fractional term;
        \item We considered a time-fractional term ${_{0}D}_t^{\bm{\beta}}$ consisting of multiple time-fractional orders $\beta_{\cdot,j}$ with variable coefficients $b_{j,\cdot}(x)$;
        \item We considered a different measurement map, given by the source-to-boundary measurement.
    \end{enumerate}
    Hence, we made use of the input source function $\mathbf{g}_2$ to recover the time-fractional orders. This method is technically different from that in \cite{LiHuangLiu2024-TimeFracCoupledDiffusion-InversePbIdentifyExponent}.
\end{remark}

\subsection{Determining the Interaction Functions $\mathbf{F}$}\label{subsect:FTime}

Finally, we recover the interaction functions $\mathbf{F}$. We make use of the higher-order linearised systems, and the proof is similar to that of Section \ref{sect:ProofSourceSpace}, and we show the main differences.

As an example, we once again consider the equation for $u_{1,k}$, dropping all subscripts `1' as before. The second-order linearisation is given, for $k=1,2$, by
\begin{equation}\label{linear2time}
\begin{cases}
{_{0}D}_t^{\bm{\beta}}u^{(1,2)}_k-\Delta u^{(1,2)}_k=p(x,t)u^{(1,2)}_k+2F^{(20\cdots 0)}_ku^{(1)}u^{(2)}\\
\qquad\qquad\qquad\qquad\qquad\qquad\qquad+\sum\limits_{j=2}^KF^{(10\cdots1\cdots0)}_k(u^{(1)}u_j^{(2)}+u^{(2)}u_j^{(1)}) &\text{ in }\  \Omega\times(0,T],\\
u^{(1,2)}_k=0&\text{ on }\  \partial\Omega\times(0, T),\\
u^{(1,2)}_k(x,0)=0&\text{ in }\ \Omega,\\
u_k\geq0&\text{ in }\ \bar{\Omega}\times[0,T],
\end{cases}
\end{equation}
since $\mathbf{p}$ have already been determined.

Again, we note that in the right-hand-side terms, the Taylor coefficients of the interaction function $\mathbf{F}$ are only correlated with the lower-order first-order linearised solutions $u^{(1)}$ and $u^{(2)}$, for which the maximum principle (Lemma \ref{StrongMaxPrincipleTime} holds. Taking the difference of the two equations for $k=1,2$, multiplying both sides of the equation by the $w$ in the time-adjoint problem \eqref{AdjointPbTime}, and integrating by parts, we once again obtain the same integral equality \eqref{SpaceEq}:
\begin{multline}2(F^{(20\cdots 0)}_1-F^{(20\cdots 0)}_2)\int_0^T\int_\Omega u^{(1)}u^{(2)}w\,dxdt\\+\sum\limits_{j=2}^K(F^{(10\cdots1\cdots0)}_1-F^{(10\cdots1\cdots0)}_2)\int_0^T\int_\Omega (u^{(1)}u_j^{(2)}+u^{(2)}u_j^{(1)})w\,dxdt=0.\end{multline} 
Then, when all except one of the $F^{(\cdot)}$'s are fixed and known, say $\hat{F}=F^{(20\cdots0)}$ or $F^{(10\cdots1\cdots0)}$, 
by the maximum principle on $u^{(1)}, m^{(2)}, u^{(2)}, m^{(1)} >0$ in $\Omega\times(0,T]$, we have $\hat{F}_1=\hat{F}_2$. This similarly holds for all second-order Taylor coefficients of $\mathbf{F}$, and by mathematical induction, this is true for the $l$-th order Taylor coefficient of $F$, $N\geq 1$. Hence, $\mathbf{F}_1=\mathbf{F}_2$ in $\Omega\times (0,T]$ and the result is proved.

\begin{remark}
    It may be possible to improve this result to consider $F$ non-constant, as in Remark \ref{rmk:SpaceExtNonConstant}. However, this requires a Runge-type density result, which is not yet known for time-fractional diffusion equations. The only known unique continuation results are in \cite{LinNakamura2019MultiTermTimeFracUCP,LinNakamura2021MultiTermTimeFracUCP}.
\end{remark}

\section{Applications}\label{sec:apply}

Fractional calculus and fractional differential equations have been more and more extensively used in various scientific fields, including physics \cite{Metzler20001,UchaikinI,UchaikinII}, chemistry \cite{FracChemistry1,FracChemistry2,FracChemistry3,FracChemistry4}, biology \cite{HenryLanglandsWearne20011FracCableNeurons}, ecology \cite{Shi2024DCDSAdvectiveKellerSegel,HeShouWu2024ChemotaxisFrac}, engineering \cite{SamkoKilbas,Kilbas,BrockmannHufnagelGeisel2006NatureAnomalousTravel}, medicine \cite{HallBarrick2012FracCable}, hydrology \cite{SchumerBensonMeerschaertBaeumer2003FracHydrology,ChakrabortyMeerschaertLim2009FracHydrology,Momani2006HydrologyBurgers,FracAdvectionDispersionExample,meerschaert2006fractional}, finance \cite{ScalasGorenfloMainardi2000FracFinance}, and so on. 
In this section, we discuss several applications of our results in different settings, with a focus on biological applications.

\subsection{Epidemics Models Based on the Nonlocal Nonlinear System}

Throughout history, there have been numerous global epidemics of deadly diseases. Although advancements in hygiene, healthcare accessibility, and medicine have reduced mortality and morbidity in recent decades, challenges remain. Mathematical models play a crucial role in understanding and examining disease dynamics, through the modelling of interactions among hosts, pathogens, and vector. These models help forecast disease spread and estimate infection numbers. 

In the study of the spread of infectious diseases, incorporating space-fractional operators into epidemic models allows for a more realistic representation of disease transmission dynamics in spatially heterogeneous environments. These models can capture the spatial spread of the disease, considering factors such as population density, movement patterns, and interactions between individuals. By incorporating fractional derivatives, these models can account for non-local and long-range interactions, which are particularly relevant in the context of disease transmission. Such versatile models have been applied to study a wide range of epidemics, including HIV \cite{FracHIV}, malaria \cite{FracMalaria}, COVID \cite{FracCOVID1,FracCOVID2,FracCOVID3,FracCOVID4}, measles \cite{FracMeasles}, typhoid \cite{FracTyphoid}, and so on (see for instance \cite{FracSmoking,FracEpidemicModelsReview} and the references therein). By incorporating fractional calculus into mathematical models, researchers can gain deeper insights into the complex behaviours of epidemic systems, address key challenges in biomedical research and healthcare, obtain a better understanding of disease transmission dynamics, and improve medical treatment assessments. 

Denoting $u_i$ for each type of population, the nonlocal susceptible-infected-recovered (SIR)-type model with anomalous diffusion in a heterogeneous environment is given by the following form:
\begin{equation}\label{EpidemicModel}
\partial_t \textbf{u}-\mathcal{L} \textbf{u}+\bm{\alpha}\cdot\nabla\mathbf{u}=\mathbf{p}(x)\textbf{u}+\textbf{F}(x,t,\textbf{u}) \quad\text{ in }\  \Omega\times(0,T],
\end{equation}
where $\mathbf{p}$ denotes the death or recovery rates and $\mathbf{F}$ denotes the infection model. Some examples of infection models \cite{FracBioPop} include the linear Malthusian Law where $\mathbf{F}(\mathbf{u})=\sum\limits_i c_iu_i$ and the quadratic Verhulst Law where $\mathbf{F}(\mathbf{u})=\sum\limits_i\sum\limits_j c_{ij}u_iu_j$ with the possibility of $i=j$. The infection rate may also depend further on the size of the different populations $u_i$, leading to higher order terms and nonlinearity for $\mathbf{F}$ \cite{SIRModel1,FracBioPopNonlinear,HsuYang2013CoupledEpidemic}.

Here, the advection term $\bm{\alpha}\cdot\nabla\mathbf{u}$ indicates that individuals can undergo passive movement in a specific direction \cite{HuHuoYuan2024DCDSAdvectiveSIRModel}. For instance, infected aquatic creatures may be carried along by water flow \cite{WaterAdvection1,WaterAdvection2}, while infected birds migrate seasonally and as a result of winds \cite{WindAdvection}. Such behaviour can be characterised by the inclusion of this advection term.

Another important point to note is that, in this model, there may be more than 3 species (the susceptibles $S=u_1$, infected $I=u_2$, and recovered $R=u_3$). For instance, one may consider a non-compliant part of the population ($S'=u_4,I'=u_5,R'=u_6$) which do not comply with the preventive measures that have been implemented to slow the spread of the disease. This portion of the population have a different rate of infection. At the same time, there may be transfers from non-compliant ($u_4,u_5,u_6$) to compliant ($u_1,u_2,u_3$) behaviour and vice versa, for instance, when individuals get vaccinated or infected, or the individual decides not to wear a mask, respectively. Further population divisions can also be considered, including different age groups, genders or blood groups, which may have different infectivity rates.

In this model, we introduce the nonlocal operator $\mathcal{L}$ to represent the nonlocal diffusive behaviour of humans, which can arise from various factors such as familial ties, job requirements, and access to amenities. Such nonlocal models help to deepen our understanding of complex behaviours, in comparison to classical models. Our main result for these models is then given by the following:
\begin{theorem}
Let $\mathbf{F}\in\mathcal{A}$ and $\mathbf{q}^n=(q_1^n,\dots,q_M^n)$ be such that $\{q_i^n\}_{n=1}^{\infty}$ is a complete set in $L^2(\Omega)$ for each $i=1,\dots,M$. Take $\mathbf{h}\in [C_c^{2,1}(\Omega_a\times(0,T))]^M$ to be given nonzero nonnegative functions. Assume that $p_i(x)\in C(\overline\Omega)$, $p_i\leq 0$ on $\Omega$ for every $i$. 
Let $\mathbf{u}^{n}(x,t;\mathbf{p}^1,\mathbf{F}^1)$, $\mathbf{u}^{n}(x,t;\mathbf{p}^2,\mathbf{F}^2)$  be the bounded classical solutions of problem \eqref{SpaceFracMainPb} corresponding to the potentials $\mathbf{p}^k$ and interaction functions $\mathbf{F}^k$ ($k=1,2$) respectively. If 
\begin{equation}
    \langle\Lambda^1_{\mathbf{p}^1,\mathbf{F}^1},\mathbf{h}\rangle=\langle\Lambda^1_{\mathbf{p}^2,\mathbf{F}^2},\mathbf{h}\rangle
\end{equation}
then 
\[\mathbf{p}^1=\mathbf{p}^2\quad\text{ in }\Omega\quad \text{ and }\quad \mathbf{F}^1=\mathbf{F}^2.\]
\end{theorem}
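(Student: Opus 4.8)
The plan is to recognise that this statement is precisely Theorem~\ref{SpaceFracThm} recast in epidemiological language, so the proof reduces to checking that the SIR-type model \eqref{EpidemicModel}, once the controlled injection source $\mathbf{q}$ is reinstated on the right-hand side, is a legitimate instance of the abstract system \eqref{SpaceFracMainPb}, and then invoking Theorem~\ref{SpaceFracThm} directly.

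First I would rewrite \eqref{EpidemicModel} as the initial-exterior value problem \eqref{SpaceFracMainPb}, so that the unknown death/recovery rates $\mathbf{p}(x)$ take the role of the reaction potentials; since these are loss terms they are indeed nonpositive and continuous on $\bar{\Omega}$, matching the hypotheses $p_i\le 0$, $p_i\in C(\bar{\Omega})$. Second, I would verify that the biologically relevant infection laws belong to the admissible class $\mathcal{A}$ of Definition~\ref{adm1}: the Malthusian law, the quadratic Verhulst law $\mathbf{F}(\mathbf{u})=\sum_{i,j}c_{ij}u_iu_j$, and their higher-order refinements are power series in $(u_1,\dots,u_M)$, and for an SIR-type interaction the transmission or transfer rate of compartment $i$ is always proportional to the current size of that compartment, so $u_i$ appears as a factor in every monomial of $F_i$ while the genuinely nonlinear part has total degree at least two --- exactly the structure required by $\mathcal{A}$. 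With this bookkeeping in place, all the hypotheses of Theorem~\ref{SpaceFracThm} hold, with $\Omega_a\subset\Omega^c$ the accessible exterior region, $\mathbf{h}>0$ the measuring profile, and $\{q_i^n\}_n$ a complete set of injected sources, so the conclusion $\mathbf{p}^1=\mathbf{p}^2$ in $\Omega$ and $\mathbf{F}^1=\mathbf{F}^2$ follows verbatim.

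For the reader's convenience one may recall the mechanism inside Theorem~\ref{SpaceFracThm}: a first-order linearisation in the source amplitude isolates the linear problem \eqref{linear1space}, and pairing its solution against the adjoint state solving \eqref{AdjointPb}, together with the completeness of $\{\varphi^n\}_n$ and the strong maximum principle (Lemma~\ref{StrongMaxPrinciple}), yields $\mathbf{p}^1=\mathbf{p}^2$; then successive higher-order linearisations, combined with the positivity of the weighted integrals \eqref{MaxPrincipleSpaceIntegrals} guaranteed by the maximum principle when $\mathbf{h}>0$, recover the Taylor coefficients of $\mathbf{F}$ order by order by induction.

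The only real obstacle here is a modelling one rather than an analytic one: one must make sure the particular infection model under consideration genuinely lies in $\mathcal{A}$ --- in particular that $F_i$ carries $u_i$ as a factor in each term (so that a pure cross-term $u_ju_k$ with $j,k\neq i$ never appears alone) and that no constant or purely linear part is left over after the compartmental derivation. If a model does contain, say, a constant recruitment term into the susceptible compartment, it should either be absorbed into the known source $\mathbf{q}$ or treated as independently prescribed, so that it does not obstruct the identification of the remaining parameters.
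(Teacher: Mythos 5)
Your proposal is correct and follows essentially the same route as the paper: the statement in Section \ref{sec:apply} is exactly Theorem \ref{SpaceFracThm} specialised to the epidemic model, and the paper likewise offers no separate argument beyond this identification, so checking that the SIR-type system with reinstated source $\mathbf{q}$ is an instance of \eqref{SpaceFracMainPb} with $\mathbf{F}\in\mathcal{A}$ and then invoking Theorem \ref{SpaceFracThm} is precisely what is intended. Your recap of the underlying mechanism (first-order linearisation paired with the adjoint problem \eqref{AdjointPb}, completeness of the sources, the strong maximum principle, and induction on the higher-order linearisations via \eqref{MaxPrincipleSpaceIntegrals}) accurately reflects the proof in Section \ref{sect:SpaceFracProof}, and your caveat about linear or constant terms not fitting Definition \ref{adm1} is a legitimate modelling point that the paper leaves implicit.
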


Here, $\mathbf{q}$ may depict various environmental factors, such as fluctuations in the number of organisms carrying pathogens, sudden changes in pathogen numbers due to weather variations, and natural births and deaths within the population. This aforementioned theorem then states that with the introduction of these environmental factors into the model, we can uniquely determine the infection, recovery and death rates of the population. This result holds significant value in the study of disease dynamics, population regulation, and the spread of infectious diseases. Moreover, it can provide valuable insights for the development of effective control strategies to mitigate the impact of such diseases. 

\subsection{Fractional Viscoelastic Models of Cells and Tissues Based on the Nonlocal Nonlinear Parabolic System}
Cells, as the fundamental units of life, undergo a myriad of mechanical processes to fulfill their biological functions. These processes often involve intricate changes in cell shape, adhesion to surfaces, and even cell division. To accomplish these tasks, cells adapt and respond to local stresses by deforming and restructuring their internal components, including the cytoplasm, cytoskeleton, and cell membrane. The complex nature of cells as gel-like entities dispersed within fluid-filled compartments interconnected by elastic support elements gives rise to a distributed strain response within the cell.

When cells experience compression, torsion, or shear stresses, their response goes beyond simple deformation. It entails a coordinated interplay of mechanical forces that induce both structural changes and fluid flow within the cell, known as cellular microrheology. To explore and understand these intricate mechanical properties at the microscopic scale, bioengineers have harnessed multiple cutting-edge technologies such as microelectrochemical systems \cite{CellMechanics1} and nanotechnologies including atomic force microscopy \cite{CellMechanics2}, magnetic cytometry \cite{CellMechanics3} and optical tweezers \cite{CellMechanics4}.

Viscoelastic models have emerged as valuable tools for characterising the mechanical responses exhibited by cells under various stimuli. In recent years, fractional order constitutive equations have been incorporated into these models to capture the intricate viscoelastic properties of cells \cite{CellMechanics3,CellMechanics5}. By employing these advanced models, researchers can investigate phenomena such as stress relaxation, where cells dissipate internal stresses over time, and creep compliance, which describes the progressive deformation of cells under a constant load. These models offer flexibility and versatility, enabling the description of cellular responses across a wide range of stimulus conditions, thereby providing a novel conceptual framework for cellular microrheology.

The rheological behaviour of a variety of different cells can be modelled by the following nonlocal differential equation (see, for instance, \cite[Chapter X.C.1]{FracBio} or \cite{JiangWang2024FracCell})
\begin{equation}\label{ViscoelasticCellModel}
\partial_t \textbf{u}-\mathcal{L} \textbf{u}=-\textbf{G}_S(x)\textbf{u}+\textbf{F}(x,t,\textbf{u})+\textbf{q} \quad\text{ in }\  \Omega\times(0,T],
\end{equation}
where $u_i$ represents the strain of each type of cell, $\mathbf{F}$ denotes cell-cell interactions, $\mathbf{G}_S$ is the static elastic modulus which is different for each type of cell, $\mathbf{q}$ signifies external stress applied to the cells, and the nonlocal operator $\mathcal{L}$ reflects the nonlocal relaxation of the cell due to its complex composition with varying viscosities.

Then our main results state that
\begin{theorem}
Let $\mathbf{F}\in\mathcal{A}$ and $\mathbf{q}^n=(q_1^n,\dots,q_M^n)$ be such that $\{q_i^n\}_{n=1}^{\infty}$ is a complete set in $L^2(\Omega)$ for each $i=1,\dots,M$. Take $\mathbf{h}\in [C_c^{2,1}(\Omega_a\times(0,T))]^M$ to be given nonzero nonnegative functions. Assume that $\mathbf{G}_S(x)\in [C(\overline\Omega)]^M$, $\mathbf{G}_S\geq \mathbf{0}$ on $\Omega$. 
Let $\mathbf{u}^{n}(x,t;\mathbf{G}_S^1,\mathbf{F}^1)$, $\mathbf{u}^{n}(x,t;\mathbf{G}_S^2,\mathbf{F}^2)$  be the bounded classical solutions of problem \eqref{ViscoelasticCellModel} corresponding to the elastic modulus $\mathbf{G}_S^k$ and cell-cell interaction functions $\mathbf{F}^k$ ($k=1,2$) respectively. If 
\begin{equation}
    \langle\Lambda^1_{\mathbf{G}_S^1,\mathbf{F}^1},\mathbf{h}\rangle=\langle\Lambda^1_{\mathbf{G}_S^2,\mathbf{F}^2},\mathbf{h}\rangle
\end{equation}
then 
\[\mathbf{G}_S^1=\mathbf{G}_S^2\quad\text{ in }\Omega\quad \text{ and }\quad \mathbf{F}^1=\mathbf{F}^2.\]
\end{theorem}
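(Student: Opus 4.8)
The plan is to recognise the viscoelastic cell model \eqref{ViscoelasticCellModel} as a special instance of the general coupled nonlocal parabolic system \eqref{SpaceFracMainPb} and then to invoke Theorem \ref{SpaceFracThm} verbatim. Concretely, I would set the convection coefficients $\bm{\alpha}=\mathbf{0}$ and relabel the reaction potential as $\mathbf{p}(x):=-\mathbf{G}_S(x)$. Under this identification the hypotheses match exactly: the assumption $\mathbf{G}_S\geq\mathbf{0}$ on $\Omega$ becomes $\mathbf{p}\leq\mathbf{0}$ on $\Omega$, and $\mathbf{G}_S\in[C(\overline\Omega)]^M$ becomes $p_i\in C(\overline\Omega)$ for every $i$; the source family $\mathbf{q}^n$ is complete in $L^2(\Omega)$ componentwise, and $\mathbf{h}\in[C_c^{2,1}(\Omega_a\times(0,T))]^M$ is nonzero and nonnegative, as required. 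Moreover, with $\bm{\alpha}=\mathbf{0}$ the boundary operator in the measurement map \eqref{DNMapSpace} reduces to $-\mathcal{L}u|_{\Omega_a\times(0,T)}$, so $\Lambda^1_{\mathbf{G}_S,\mathbf{F}}$ is precisely $\Lambda^1_{\mathbf{p},\mathbf{F}}$ for this choice.

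With the identification in place, the hypothesis $\langle\Lambda^1_{\mathbf{G}_S^1,\mathbf{F}^1},\mathbf{h}\rangle=\langle\Lambda^1_{\mathbf{G}_S^2,\mathbf{F}^2},\mathbf{h}\rangle$ is exactly condition \eqref{condtionSpace}, and Theorem \ref{SpaceFracThm} yields $\mathbf{p}^1=\mathbf{p}^2$ in $\Omega$ together with $\mathbf{F}^1=\mathbf{F}^2$. Translating back through $\mathbf{p}=-\mathbf{G}_S$, this is the desired conclusion $\mathbf{G}_S^1=\mathbf{G}_S^2$ in $\Omega$ and $\mathbf{F}^1=\mathbf{F}^2$. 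The two structural layers of the argument — first determining the potential via the first-order linearised system \eqref{linear1space} and the adjoint problem \eqref{AdjointPb}, then determining the Taylor coefficients of $\mathbf{F}$ inductively through the higher-order linearisations as in Section \ref{sect:ProofSourceSpace} — apply unchanged.

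The only point that genuinely warrants a check is that none of the ingredients in the proof of Theorem \ref{SpaceFracThm} secretly requires $\bm{\alpha}\neq\mathbf{0}$. This is indeed the case: the weak and strong maximum principles (Lemmas \ref{WMP} and \ref{StrongMaxPrinciple}) are stated for an arbitrary constant convection vector, the value $\alpha=0$ included; the Hölder regularity and uniform boundedness of the adjoint solution $w$ persist (the drift only makes the regularity theory harder, not easier); and the duality identity used to pass information from $\Omega_a$ into $\Omega$ relies solely on the formal skew-adjointness of $\alpha\cdot\nabla$, which is trivial when the term vanishes. Consequently the statement is a direct corollary and the main ``obstacle'' is merely bookkeeping — confirming that dropping the advection term and rewriting $-\mathbf{G}_S$ as $\mathbf{p}$ preserves every hypothesis and every step, after which the conclusion is immediate.
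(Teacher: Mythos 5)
Your proposal is correct and matches the paper's intent exactly: the paper offers no separate proof of this application theorem, treating it as a direct instance of Theorem \ref{SpaceFracThm} under the identification $\mathbf{p}=-\mathbf{G}_S$ and $\bm{\alpha}=\mathbf{0}$, which is precisely your argument. Your additional check that no step of the main proof secretly requires a nonzero drift is a sensible (and correct) piece of due diligence.
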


This means that by applying a variety of stresses $\mathbf{q}$, we can uniquely determine the elastic modulus $G_S$ for all the different types of cells, as well as their analytic nonlinear coupled interactions $\mathbf{F}$. Hence, by exploring stress relaxation, creep compliance, and other mechanical phenomena, these viscoelastic models provide valuable insights into the intrinsic characteristics of cells. They help us understand how cells perceive and respond to mechanical cues within themselves and with their microenvironment. By unraveling the intricacies of cellular microrheology, these studies contribute to our understanding of fundamental biological processes, such as embryonic development, tissue morphogenesis, wound healing, and disease progression. Ultimately, this expanded knowledge paves the way for the development of innovative approaches and therapeutic strategies that leverage the mechanical properties of cells for regenerative medicine, tissue engineering, and drug delivery applications.

\subsection{Brain Tumour Growth Model Based on the Time-Fractional Diffusion System}
Brain tumours are abnormal cell growths that can occur in the brain or spinal cord, affecting various functions of the nervous system. Typically, imaging tests such as magnetic resonance imaging (MRI), advanced brain tumour imaging (ABTI), computed tomography (CT) scan and positron emission tomography (PET) scan are first conducted to detect the location and characteristics of the tumour. Yet, current imaging tests are unable to differentiate between various types and grades of brain tumours. Often, it is necessary for a biopsy to be conducted to obtain a tissue sample for further analysis. However, this procedure carries many risks, including bleeding and infection. Therefore, computational methods and mathematical models \cite{TumourClassical1,TumourClassical2,TumourClassical3} are valuable supplements to traditional diagnostic methods, by estimating tumour parameters, simulating tumour growth, and predicting tumour behaviour under different conditions, to evaluate treatment effectiveness and aid in treatment planning. In recent years, fractional calculus has been introduced into such models \cite{TumourFrac1,TumourFrac2,TumourFrac3}, and has been found to offer a more accurate representation of the physical scenario. This is due to increased degrees of freedom and nonlocal memory properties exhibited by fractional derivatives, in comparison with classical derivatives, while still weighing more strongly on current history than distant past events. Furthermore, chemotherapy drug diffusion and reaction can also be incorporated into the model, with the introduction of fractional derivatives.

This model is given by the nonlinear time-fractional proliferation-invasion model:
\begin{equation}\label{TumourModel}
\sum_{j=1}^{B}b_j({_{0}D}_t^{\beta_j}\mathbf{u})-\mathbf{d}\Delta \mathbf{u}=\bm{\rho} \mathbf{u}(1-\bm{\kappa}\mathbf{u}),\quad\text{ in }\Omega\times(0, T],
\end{equation}
where $u_i$ signifies the different types of cells (e.g. tumour cells, immune cells such as lymphocytic T-cells and natural killer cells, drug cells), the anisotropic diffusion coefficient $d_i$ of these cells depends on the heterogeneous media (e.g. cerebrospinal fluid, gray matter of brain, white matter of brain, skull, etc. for brain tumours), $\rho_i$ is the proliferation rate indicating the ability of the cells to divide and multiply, based on a multiplication factor $\kappa_i\in\mathbb{R}$. The fractional time derivative describes the long-tailed nonlocal anisotropic dependence of the current tumour growth rate on its previous growth, location as well as the response to the immune cells and chemotherapy drugs.

In order to better understand the characteristics of the tumours, it is a common practice to administer contrast dye during imaging procedures \cite{ContrastImaging1,ContrastImaging2,SiniContrastImaging}. The presence of this contrast dye may be incorporated into the model \eqref{TumourModel} by introducing an additional source function $\mathbf{q}$ that represents its manifestation:
\begin{equation}\label{TumourModelQ}
{_{0}D}_t^{\bm{\beta}}\mathbf{u}-\mathbf{d}\Delta \mathbf{u}=\bm{\rho} \mathbf{u}(1-\bm{\kappa}\mathbf{u})+\mathbf{q},\quad\text{ in }\Omega\times(0, T].
\end{equation}
Then, our main result in Theorem \ref{TimeFracThm} is given by 
\begin{theorem}
Let $\mathbf{q}^n=(q_1^n,\dots,q_M^n)$ be such that $\{q_i^n\}_{n=1}^{\infty}$ is a complete set in $L^2(\Omega\times(0,T))$ for each $i=1,\dots,M$. For $\Gamma\subseteq\partial\Omega$, take $\mathbf{h}\in [C_c(\Gamma\times(0,T))]^M$ to be given nonzero nonnegative functions. Assume that $\rho_i(x,t)\in C(\overline\Omega\times[0,T])$, $\rho_i\leq 0$ on $\Omega\times(0,T]$ for every $i$. 
Let $\mathbf{u}^{n}(x,t;\bm{\rho}^1,\bm{\kappa}^1,\bm{\beta}^1)$, $\mathbf{u}^{n}(x,t;\bm{\rho}^2,\bm{\kappa}^2,\bm{\beta}^2)$  be the bounded classical solutions of problem \eqref{TumourModelQ} corresponding to $\bm{\rho}^k$ and $\bm{\kappa}^k$ ($k=1,2$) respectively. 
If 
\begin{equation}
    \Lambda^2_{\bm{\rho}^1,\bm{\kappa}^1}=\Lambda^2_{\bm{\rho}^2,\bm{\kappa}^2}\quad \text{ and }\quad \Lambda^3_{\bm{\beta}^1}=\Lambda^3_{\bm{\beta}^2}
\end{equation}
then 
\begin{equation}\bm{\rho}^1=\bm{\rho}^2\quad\text{ in }\Omega\times(0,T) \quad\text{ and }\quad\bm{\kappa}^1=\bm{\kappa}^2\quad \text{ and }\quad \bm{\beta}^1=\bm{\beta}^2.\end{equation}
\end{theorem}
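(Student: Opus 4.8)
The plan is to view \eqref{TumourModelQ} as an instance of the time-fractional system \eqref{TimeMainPb0} and to reuse the three-step argument behind Theorem \ref{TimeFracThm}, the only genuinely new point being that the quadratic coefficient is $(x,t)$-dependent while the sought-after $\bm{\kappa}$ is a constant vector. Componentwise, $\rho_i u_i(1-\kappa_i u_i)=\rho_i u_i+(-\rho_i\kappa_i)u_i^2$, so we set $p_i(x,t)=\rho_i(x,t)$ (which is $\le 0$ by hypothesis, as required) and $F_i(x,t,\mathbf{u})=-\rho_i(x,t)\kappa_i u_i^2$; the latter vanishes to second order in $\mathbf{u}$, so every first-order linearisation annihilates it. Since the $d_i$ are known and there is no drift, I would first apply the change of variables $y=x/\sqrt{d_i}$ used at the beginning of Section \ref{sect:TimeFracProof} (with $\bm{\alpha}\equiv\mathbf{0}$, so no exponential weight is needed) to normalise the Laplacian; this leaves $\bm{\beta}$ and $\bm{\kappa}$ unchanged and replaces $\rho_i$ by an equivalent potential.

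Next I would recover $\bm{\rho}$ and then $\bm{\beta}$. For $\bm{\rho}$ this is verbatim the step ``Determining the Potentials $\mathbf{p}$'' of Section \ref{sect:TimeFracProof}: choosing $g_1=\phi^n(x,t)$ over a complete set of $L^2(\Omega\times(0,T))$, the first-order linearisation drops the quadratic term, and pairing it with the solution $w$ of the backward adjoint problem \eqref{AdjointPbTime}, invoking the fractional integration-by-parts identity, the hypothesis $\Lambda^2_{\bm{\rho}^1,\bm{\kappa}^1}=\Lambda^2_{\bm{\rho}^2,\bm{\kappa}^2}$, completeness of $\{\phi^n\}$, and finally the strong maximum principle (Lemma \ref{StrongMaxPrincipleTime}) forces $\bm{\rho}^1=\bm{\rho}^2$ in $\Omega\times(0,T)$. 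For $\bm{\beta}$ I would repeat the step ``Determining the Time-Fractional Orders $\bm{\beta}$'': linearising about $g_2=e^{-at}f(x)$ yields the uncoupled system \eqref{linear21}, whose components are time-analytic and extend to $(0,\infty)$, so one may Laplace transform, form the auxiliary quotient $w(s)=(\hat{\mathbf{u}}_1-\hat{\mathbf{u}}_2)/(s^{\hat{\beta}^1}-s^{\hat{\beta}^2})$, control its limit as $s\to0$ by elliptic regularity, and use positivity of $\hat{\mathbf{u}}_2$ together with negativity of the limiting source to obtain a sign contradiction at $x_0$ with $\Lambda^3_{\bm{\beta}^1}=\Lambda^3_{\bm{\beta}^2}$; this gives $\bm{\beta}^1=\bm{\beta}^2$.

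Once $\bm{\rho}$ and $\bm{\beta}$ are identified, the first-order linearised solutions $u^{(1)}$ and $u^{(2)}$ coincide for the two data sets (they solve the same linear problem), and the second-order linearisation \eqref{linear2time} for the difference $\tilde u=u^{(1,2)}_1-u^{(1,2)}_2$ carries the source $2\big(F^{(0\cdots 2\cdots 0)}_{i,1}-F^{(0\cdots 2\cdots 0)}_{i,2}\big)u^{(1)}u^{(2)}=-2\,\rho_i(x,t)\big(\kappa_i^1-\kappa_i^2\big)u^{(1)}u^{(2)}$, using $\bm{\rho}^1=\bm{\rho}^2=:\bm{\rho}$. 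Pairing with $w$ from \eqref{AdjointPbTime} and integrating by parts gives
\[
\big(\kappa_i^1-\kappa_i^2\big)\int_0^T\!\!\int_\Omega \rho_i(x,t)\,u^{(1)}u^{(2)}\,w\,dx\,dt=0 .
\]
Because $\mathbf{h}$ and all the $g_l$ are nonnegative and nontrivial, the strong maximum principle gives $u^{(1)},u^{(2)},w>0$ in $\Omega\times(0,T]$, so the integrand is $\le 0$ with strict inequality on the nonempty open set $\{\rho_i<0\}$, whence the integral is strictly negative, hence nonzero, and therefore $\kappa_i^1=\kappa_i^2$. Running this for $i=1,\dots,M$ yields $\bm{\kappa}^1=\bm{\kappa}^2$, completing the proof.

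The main obstacle is precisely this last step. Since the quadratic coefficient $-\rho_i\kappa_i$ genuinely depends on $(x,t)$, it is not a constant Taylor coefficient and so $\mathbf{F}$ is not literally in the admissible class $\mathcal{A}$; thus Theorem \ref{TimeFracThm} cannot be quoted verbatim, and by Remark \ref{rmk:SpaceExtNonConstant} there is no Runge-type density tool available to pin down a general $(x,t)$-dependent coefficient pointwise. What rescues the argument is that $\bm{\kappa}$ is a \emph{constant} vector and $\bm{\rho}$ has already been determined, so $\kappa_i^1-\kappa_i^2$ factors out of the integral and only a single nonvanishing weighted integral is needed, which is supplied by the one-sidedness and nontriviality of $\rho_i$ combined with the maximum principle. (Implicit throughout is the mild nondegeneracy that the proliferation rate $\rho_i$ does not vanish identically, without which $\kappa_i$ cannot be recovered from this model.)
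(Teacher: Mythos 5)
Your proposal is correct and, for the recovery of $\bm{\rho}$ and $\bm{\beta}$, follows the paper exactly: the paper presents this theorem purely as an instance of Theorem \ref{TimeFracThm} with $\mathbf{p}=\bm{\rho}$ and $F_i=-\rho_i(x,t)\kappa_i u_i^2$, and offers no separate argument beyond that citation. Where you genuinely diverge --- and improve on the paper --- is the $\bm{\kappa}$ step. As you observe, $-\rho_i(x,t)\kappa_i$ is not a constant Taylor coefficient, so this $\mathbf{F}$ is not literally in the admissible class $\mathcal{A}$ of Definition \ref{adm1}, Theorem \ref{TimeFracThm} cannot be quoted verbatim, and by Remark \ref{rmk:SpaceExtNonConstant} no Runge-type tool is available for a general $(x,t)$-dependent coefficient; the paper silently elides this point. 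Your repair --- factoring the constant $\kappa_i^1-\kappa_i^2$ out of the second-order integral identity obtained by pairing with the adjoint solution of \eqref{AdjointPbTime}, and then using $\rho_i\le 0$, $\rho_i\not\equiv 0$ together with the positivity of $u^{(1)}$, $u^{(2)}$ and $w$ from Lemma \ref{StrongMaxPrincipleTime} to conclude that the single weighted integral $\int_0^T\int_\Omega \rho_i\,u^{(1)}u^{(2)}w\,dx\,dt$ is strictly negative --- is exactly the right fix: it trades the constant-coefficient hypothesis of $\mathcal{A}$ for the structural knowledge that the quadratic coefficient is a known function times one unknown constant. Your explicit caveat that $\rho_i\not\equiv 0$ is required (otherwise the reaction term vanishes identically and $\kappa_i$ is manifestly unidentifiable) is a hypothesis the paper's statement omits and should be added.
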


This result means that the introduction of a dense set of contrast dyes has multiple physical implications in the model as well as for the imaging technique. It enables the unique determination of key parameters for all the different kinds of cells, including the proliferation rates $\rho_i$, the multiplication factors $\kappa_i$, as well as the nonlocal time-anisotropy $\bm{\beta}$, based on partial averaged-out flux measurements $\Lambda^2$ along part of the boundary $\Gamma\subset\partial\Omega$, as well as measurements $\Lambda^3$ at a single interior point $\{x_0\}\times(0,T)$. 

Therefore, with the introduction of contrast dyes $\mathbf{q}$, the model becomes capable of extracting valuable information regarding the growth and temporal behaviour of the tumour. The understanding of these parameters contributes to a more comprehensive understanding of the tumour's behaviour and characteristics, facilitating the formulation of more effective treatment strategies.

\subsection{Biological Pattern Formation Based on Time-Fractional Reaction-Diffusion Systems}

Reaction-diffusion systems play a vital role  in describing and understanding the spatial patterns that emerge through chemical processes within cells, including mazes, stripes and spots, that are present in hydra pattern formation, shell pigmentation, and animal coat markings \cite{PatternFormAnimal1,PatternFormAnimal2,TuringPatterns}. By introducing the Caputo time-fractional operator, the model is now capable of describing more complex patterns. In particular, it is known that the time-fractional reaction-diffusion system can now give rise to travelling wave patterns \cite{FracPatternFormAnimal}, instead of the transient, oscillating or stationary structures associated to the classical system. With the inclusion of nonlocality and memory effects through the fractional time derivative, the model can now incorporate the influence of previous substance concentrations and reaction states into present dynamics. This enhancement leads to the emergence of fascinating new patterns including spiral and irregular patterns, as well as diamond, cyclic, and star-like structures \cite{ghafoor2024dynamics}. These biological pattern formations are mathematically modelled by the following time-fractional system of equations
\begin{equation}\label{ReactionDiffusion}
{_{0}D}_t^{\bm{\beta}}\mathbf{u}-\mathbf{d}\Delta \mathbf{u}=\mathbf{F}(\mathbf{u}),\quad\text{ in }\Omega\times(0, T].
\end{equation}
Two examples of reaction-diffusion systems are the Gray-Scott model and the Schnakenberg model. 


The Gray-Scott model is a mathematical model that has been widely used to study spatial patterns and self-organisation in biological systems. It has been applied to understand the formation of self-reproducing spot-like patterns and the dynamics of pattern replication. It captures the interplay between multiple chemical species, typically referred to as activators and inhibitors. The activators promote their own production and the production of the inhibitors, while the inhibitors suppress the production of both themselves and the activators. This autocatalytic and inhibitory interaction gives rise to the complex dynamics observed in biological pattern formation. These patterns often exhibit a characteristic spot-like morphology, where regions of high activator concentrations are surrounded by regions of low activator concentrations. Such patterns can be seen in a variety of animals, including the stripes on a zebra, the spots on a cheetah, as well as the pigment patterns on fish skin, butterfly wings and flower petals. Hence, the Gray-Scott model provides insights into the underlying mechanisms governing pattern formation and describes how genetic and environmental factors influence these patterns. The time-fractional Gray-Scott model with external environmental source functions $\mathbf{q}_1,\mathbf{q}_2$ is given by the following system \cite{ghafoor2024dynamics}:  
\begin{equation}\label{GrayScott}
\begin{split}
    {_{0}D}_t^{\bm{\beta}}\mathbf{u}-\mathbf{d}\Delta \mathbf{u}=-\gamma\mathbf{u}-m\mathbf{u}\mathbf{v}^2+\mathbf{q}_1(x,t),\quad\text{ in }\Omega\times(0, T],\\
{_{0}D}_t^{\bm{\beta}'}\mathbf{v}-\mathbf{d}'\Delta \mathbf{v}=(-\gamma-c)\mathbf{v}+m\mathbf{u}\mathbf{v}^2+\mathbf{q}_2(x,t),\quad\text{ in }\Omega\times(0, T].
\end{split}
\end{equation}
Here, $\mathbf{d}$ and $\mathbf{d}'$ describe the diffusion rates of the activators $\mathbf{u}$ and inhibitors $\mathbf{v}$, $m$ represents the reaction rate, $\gamma$ represents the input rate of the activators $\mathbf{u}$, while $c$ represents the removal or ``kill" rate.

Then, our main result in Theorem \ref{TimeFracThm} is given by 
\begin{theorem}
Let $\{\mathbf{q}_1^n\}_{n=1}^{\infty},\{\mathbf{q}_2^n\}_{n=1}^{\infty}$ be complete sets in $L^2(\Omega\times(0,T))$. For $\Gamma\subseteq\partial\Omega$, take $\mathbf{h}\in [C_c(\Gamma\times(0,T))]^M$ to be given nonzero nonnegative functions. For $\gamma,c>0$, let $(\mathbf{u}^{n}_k,\mathbf{v}^{n}_k)$ be the bounded classical solutions of problem \eqref{GrayScott} corresponding to $\gamma^k$, $c^k$, $m^k$ and $\bm{\beta}^k$ ($k=1,2$) respectively. 
If 
\begin{equation}
    \Lambda^2_{\gamma^1,c^1,m^1}=\Lambda^2_{\gamma^2,c^2,m^2}\quad \text{ and }\quad \Lambda^3_{\bm{\beta}^1}=\Lambda^3_{\bm{\beta}^2}
\end{equation}
then 
\begin{equation}\gamma^1=\gamma^2,\quad c^1=c^2,\quad m^1=m^2, \quad \text{ and }\quad \bm{\beta}^1=\bm{\beta}^2.\end{equation}
\end{theorem}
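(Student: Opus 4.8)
The plan is to obtain this as a direct specialization of Theorem \ref{TimeFracThm}(3). First I would recast the Gray--Scott system \eqref{GrayScott} in the abstract form \eqref{TimeMainPb0} by concatenating the activator and inhibitor fields into a single unknown $\mathbf{U}=(\mathbf{u},\mathbf{v})$. The system then reads
\[
{_{0}D}_t^{\bm{\beta}}\mathbf{U}-\mathbf{D}\Delta\mathbf{U}+\bm{\alpha}\cdot\nabla\mathbf{U}=\mathbf{P}\,\mathbf{U}+\mathbf{F}(x,t,\mathbf{U})+\mathbf{Q}\quad\text{ in }\Omega\times(0,T],
\]
with homogeneous Dirichlet and zero initial data, where $\bm{\alpha}\equiv\mathbf{0}$ (no advection), the multi-term time-fractional operator carries the orders $\bm{\beta}$ on the $\mathbf{u}$-block and $\bm{\beta}'$ on the $\mathbf{v}$-block, $\mathbf{D}=(\mathbf{d},\mathbf{d}')$, the potential is the constant (hence $(x,t)$-independent) vector $\mathbf{P}=(-\gamma,\,-\gamma-c)$, the source is $\mathbf{Q}=(\mathbf{q}_1,\mathbf{q}_2)$, and the interaction function is $\mathbf{F}(\mathbf{U})=(-m\,\mathbf{u}\mathbf{v}^2,\ m\,\mathbf{u}\mathbf{v}^2)$. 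Under this dictionary the physical maps $\Lambda^2_{\gamma,c,m}$ and $\Lambda^3_{\bm{\beta}}$ coincide with the abstract maps $\Lambda^2_{\mathbf{P},\mathbf{F}}$ and $\Lambda^3_{\bm{\beta}}$ of Section \ref{subsect:MainResDef} written in terms of the physical parameters.

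Next I would verify that the hypotheses of Theorem \ref{TimeFracThm} hold for this concatenated system. The decisive point is that $\mathbf{F}\in\mathcal{A}$: each component of $\mathbf{F}$ is the single cubic monomial $\mp m\,u_iv_i^2$ with constant coefficient, of total degree $3\ge2$, and the component's own unknown appears with exponent $1$ in the $\mathbf{u}$-block and $2$ in the $\mathbf{v}$-block, so in both cases the constraint ``$k_i\ge1$'' of Definition \ref{adm1} is met and $F_i$ has (trivially) the required power-series form. Since $\gamma,c>0$, the entries of $\mathbf{P}$ satisfy $-\gamma<0$ and $-\gamma-c<0$, so $P_i\le0$ on $\Omega\times(0,T]$, and being constants they lie in $C(\overline\Omega\times[0,T])$. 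The completeness of $\{\mathbf{q}_1^n\}_n$ and $\{\mathbf{q}_2^n\}_n$ in $L^2(\Omega\times(0,T))$ is exactly the completeness of the components of $\mathbf{Q}$, $\mathbf{h}$ is a nonzero nonnegative compactly supported field on $\Gamma\times(0,T)$, and the solutions $(\mathbf{u}^n_k,\mathbf{v}^n_k)$ are assumed bounded, classical and nonnegative, as required.

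With the hypotheses in place, the assumed equalities $\Lambda^2_{\gamma^1,c^1,m^1}=\Lambda^2_{\gamma^2,c^2,m^2}$ and $\Lambda^3_{\bm{\beta}^1}=\Lambda^3_{\bm{\beta}^2}$ are precisely the assumptions of Theorem \ref{TimeFracThm}(3) for the concatenated system, which then gives $\mathbf{P}^1=\mathbf{P}^2$, $\mathbf{F}^1=\mathbf{F}^2$ and $\bm{\beta}^1=\bm{\beta}^2$ (the last including the $\mathbf{v}$-block orders $\bm{\beta}'$). It remains only to decode: the $\mathbf{u}$-block of $\mathbf{P}^1=\mathbf{P}^2$ gives $-\gamma^1=-\gamma^2$, hence $\gamma^1=\gamma^2$; the $\mathbf{v}$-block then gives $-\gamma^1-c^1=-\gamma^2-c^2$, hence $c^1=c^2$; and matching the cubic Taylor coefficient in $\mathbf{F}^1=\mathbf{F}^2$ gives $-m^1=-m^2$, hence $m^1=m^2$. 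This establishes the theorem.

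The only step that is not purely formal is the structural check that the Gray--Scott nonlinearity lies in the admissible class $\mathcal{A}$ after concatenation, and in particular that the componentwise requirement $k_i\ge1$ is respected by both the activator and the inhibitor equation; once this is confirmed the result is an immediate corollary of Theorem \ref{TimeFracThm}. I would also note that the reaction rate $m$ is in fact over-determined, being recoverable from the cubic coefficient of either block, which furnishes a built-in consistency check on the measured data.
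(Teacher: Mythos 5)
Your proposal is correct and matches the paper's (implicit) argument: the Gray--Scott theorem is stated in Section \ref{sec:apply} precisely as a specialization of Theorem \ref{TimeFracThm}(3), obtained by concatenating $(\mathbf{u},\mathbf{v})$, reading off the constant negative potentials $(-\gamma,-\gamma-c)$ and the cubic interaction $\mp m\,\mathbf{u}\mathbf{v}^2$, and checking membership in $\mathcal{A}$ exactly as you do. Your decoding of $\gamma$, $c$, $m$ from $\mathbf{P}^1=\mathbf{P}^2$ and $\mathbf{F}^1=\mathbf{F}^2$ is the intended reading, so there is nothing to add.
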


This theorem implies that by introducing a dense set of external environmental factors $\mathbf{q}_1,\mathbf{q}_2$, we can uniquely determine the various rates of reactions, including the input rate, reaction rate, and consumption rate. 

Another biological model describing activation-inhibition is the Schnakenberg model, also known as the activator-depleted model. Proposed by Julius Schnakenberg in 1979 \cite{SCHNAKENBERG1979389}, the Schnakenberg model shares similarities with the Gray-Scott model but allows for autocatalysis, where activators stimulate their own production, unlike the Gray-Scott model that requires feeding of activators. This forms a positive feedback loop for activators, which is counteracted by the negative inhibition feedback action of inhibitors. The time-fractional Schnakenberg model has been shown to display a variety of complex spatial patterns such as stripes, spots, or labyrinthine patterns \cite{ghafoor2024dynamics}, and is given as follows:
\begin{equation}\label{Schnakenberg}
\begin{split}
    {_{0}D}_t^{\bm{\beta}}\mathbf{u}-\mathbf{d}\Delta \mathbf{u}=-c\mathbf{u}+\gamma\mathbf{u}^2\mathbf{v}+\mathbf{q}_1(x,t),\quad\text{ in }\Omega\times(0, T],\\
{_{0}D}_t^{\bm{\beta}'}\mathbf{v}-\mathbf{d}'\Delta \mathbf{v}=-\gamma\mathbf{u}^2\mathbf{v}+\mathbf{q}_2(x,t),\quad\text{ in }\Omega\times(0, T].
\end{split}
\end{equation}

In this case, our theorem is stated as follows:
\begin{theorem}
Let $\{\mathbf{q}_1^n\}_{n=1}^{\infty},\{\mathbf{q}_2^n\}_{n=1}^{\infty}$ be complete sets in $L^2(\Omega\times(0,T))$. For $\Gamma\subseteq\partial\Omega$, take $\mathbf{h}\in [C_c(\Gamma\times(0,T))]^M$ to be given nonzero nonnegative functions. For $\gamma,c>0$, let $(\mathbf{u}^{n}_k,\mathbf{v}^{n}_k)$ be the bounded classical solutions of problem \eqref{Schnakenberg} corresponding to $\gamma^k$, $c^k$ and $\bm{\beta}^k$ ($k=1,2$) respectively. 
If 
\begin{equation}
    \Lambda^2_{\gamma^1,c^1}=\Lambda^2_{\gamma^2,c^2}\quad \text{ and }\quad \Lambda^3_{\bm{\beta}^1}=\Lambda^3_{\bm{\beta}^2}
\end{equation}
then 
\begin{equation}\gamma^1=\gamma^2,\quad c^1=c^2,\quad \text{ and }\quad \bm{\beta}^1=\bm{\beta}^2.\end{equation}
\end{theorem}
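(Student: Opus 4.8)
The plan is to derive this statement as a direct specialization of Theorem~\ref{TimeFracThm}, since \eqref{Schnakenberg} is an instance of the general time-fractional system \eqref{TimeMainPb0}. First I would cast \eqref{Schnakenberg} into the form \eqref{TimeMainPb0}: group the activator components into a first block and the substrate components into a second, so that the combined unknown is $\mathbf{w}=(\mathbf{u},\mathbf{v})$ with the known fixed diffusion matrix $\mathrm{diag}(\mathbf{d},\mathbf{d}')$ and no convection term. The operator ${_{0}D}_t^{\bm{\beta}}$ in the sense of \eqref{TimeMainPb0} already allows each component $u_i$ its own collection of orders $\{\beta_{j,i}\}$, so the order vector $\bm{\beta}$ on the activator block and $\bm{\beta}'$ on the substrate block are both encoded in the single object whose recovery is asserted in Theorem~\ref{TimeFracThm}. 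The potential is $\mathbf{p}=(-c,0)$: it equals $-c<0$ on the activator equations and $0$ on the substrate equations, so the hypothesis $p_i\le 0$ of Theorem~\ref{TimeFracThm} is met, and the completeness of $\{\mathbf{q}_1^n\}_n$ and $\{\mathbf{q}_2^n\}_n$ in $L^2(\Omega\times(0,T))$ yields the required completeness of $\{q_i^n\}_n$ for every component $i$.

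Next I would check that the Schnakenberg reaction term lies in the admissible class $\mathcal{A}$ of Definition~\ref{adm1}. On each activator component the nonlinearity is $\gamma\,u_i^2 v_j$, a single monomial of total degree $3\ge 2$ in which the $i$-th variable occurs with exponent $2\ge 1$; on each substrate component it is $-\gamma\,u_i^2 v_j$, again of total degree $3$ with the relevant variable occurring to the first power. Hence $\mathbf{F}=(\gamma\mathbf{u}^2\mathbf{v},-\gamma\mathbf{u}^2\mathbf{v})\in\mathcal{A}$, its only nonzero Taylor coefficients being $\pm\gamma$ at the multi-index corresponding to $u_i^2 v_j$. With all structural hypotheses verified, part~(3) of Theorem~\ref{TimeFracThm} applies and yields $\mathbf{p}^1=\mathbf{p}^2$, $\mathbf{F}^1=\mathbf{F}^2$ and $\bm{\beta}^1=\bm{\beta}^2$ from the assumed equalities $\Lambda^2_{\gamma^1,c^1}=\Lambda^2_{\gamma^2,c^2}$ and $\Lambda^3_{\bm{\beta}^1}=\Lambda^3_{\bm{\beta}^2}$ (here $\Lambda^2_{\gamma,c}$ and $\Lambda^3_{\bm{\beta}}$ are just $\Lambda^2_{\mathbf{p},\mathbf{F}}$ and $\Lambda^3_{\bm{\beta}}$ rewritten in terms of the Schnakenberg parameters).

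Finally I would read off the parameters: $\mathbf{p}^1=\mathbf{p}^2$ forces $c^1=c^2$ (the potential restricted to the activator block), $\mathbf{F}^1=\mathbf{F}^2$ forces $\gamma^1=\gamma^2$ (the degree-$3$ Taylor coefficient, consistently recovered from either block), and $\bm{\beta}^1=\bm{\beta}^2$ is immediate and incorporates the substrate orders $\bm{\beta}'$. Since the whole argument reduces to checking the hypotheses of Theorem~\ref{TimeFracThm}, there is no substantive obstacle; the only points deserving care are confirming the ``$k_i\ge 1$'' condition in $\mathcal{A}$ for the Schnakenberg monomials (so that the high-order linearisation of Section~\ref{sect:linearzation} indeed detects $\gamma$, which here happens at third order rather than second), and observing that the borderline value $p_i=0$ on the substrate equations remains admissible for the maximum-principle input (Lemma~\ref{StrongMaxPrincipleTime}) used inside the proof of Theorem~\ref{TimeFracThm}.
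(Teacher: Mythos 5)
Your proposal is correct and follows the same route the paper intends: the Schnakenberg theorem is stated as a direct specialization of Theorem~\ref{TimeFracThm}, with $\mathbf{p}=(-c,0)$, $\mathbf{F}=(\gamma\mathbf{u}^2\mathbf{v},-\gamma\mathbf{u}^2\mathbf{v})\in\mathcal{A}$ and the combined order vector $(\bm{\beta},\bm{\beta}')$, and your verification of the hypotheses (including the borderline $p_i=0$ on the substrate block and the fact that $\gamma$ is the unique unknown Taylor coefficient, detected at the third-order linearisation) is exactly the check the paper leaves implicit.
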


Physically, this result states that we are able to uniquely determine the rates of autocatalysis rates and activation-inhibition reactions in the system, by controlling the concentration of the external reactants represented by $\mathbf{q}_1$ and $\mathbf{q}_2$. By understanding these reaction parameters, we gain deeper insights into the mechanisms underlying the emergence of spatial patterns and self-organisation in biological processes.

\vspace{2em}

\noindent\textbf{Acknowledgment.} 
	The work was supported by the Hong Kong RGC General Research Funds (No. 12302919, 12301420 and 11300821), the NSFC/RGC Joint Research Fund (No. N\_CityU101/21), and the ANR/RGC Joint Research Grant (No. A\_CityU203/19).

\noindent\textbf{Conflict of interest statement.} 
	The authors declare that they have no conflict of interests that could have appeared to influence the work reported in this paper.

\noindent\textbf{Data availability statement.} 
We do not analyse or generate any datasets, because our work proceeds within a theoretical and mathematical approach.

\bibliographystyle{plain}
\bibliography{ref,refInversePb,refMFG}

\end{document}